\newcounter{lemma}[section]
\newcounter{corollary}[section]
\newcounter{remark}[section]
\newcounter{theorem}[section]
\newcounter{proposition}[section]
\newcounter{example}
\numberwithin{equation}{section}
\begin{document}

\markboth{\centerline{E.~SEVOST'YANOV,
O.~DOVHOPIATYI}}{\centerline{ON COMPACT CLASSES...}}

\def\cc{\setcounter{equation}{0}
\setcounter{figure}{0}\setcounter{table}{0}}

\overfullrule=0pt


\author{O.~DOVHOPIATYI, E.~SEVOST'YANOV}

\title{
{\bf ON COMPACT CLASSES OF SOLUTIONS OF DIRICHLET PROBLEM IN SIMPLY
CONNECTED DOMAINS}}

\date{\today}
\maketitle

\begin{abstract}
The article is devoted to questions concerning the problems of
compactness of solutions of the Dirichlet problem for the Beltrami
equation in some simply connected domain. In terms of prime ends, we
have proved results of a detailed form for the case when the maximal
dilations of these solutions satisfy certain integral constraints.
In addition, in this article we have proved theorems on the local
and global behavior of plane and spatial mappings with direct and
inverse modulus conditions.
\end{abstract}

\section{Introduction}

In our recent joint publication~\cite{SD}, we proved the compactness
theorem of the classes of solutions of the Dirichlet problem for the
Beltrami equation in a simply connected Jordan domain whose
characteristics satisfy the constraints of the integral type. In
this article we are talking about solutions defined in an arbitrary
simply connected domain. Since such domains do not have to be
Jordanian, this is a slight relaxation of conditions compared
to~\cite{SD}. Note the existence of solutions of the Dirichlet
problem under certain assumptions (see, e.g., ~\cite{P}). Note that
such solutions, generally speaking, do not have a homeomorphic
extension to the boundary of the domain in the usual sense, since
simply connected domains do not have to be locally connected on
their boundary. However, such a continuation usually takes place in
terms of the so-called prime ends.

\medskip
Let $D$ be a domain in ${\Bbb C}.$ In what follows, a mapping
$f:D\rightarrow{\Bbb C}$ is assumed to be {\it sense-preserving,}
moreover, we assume that $f$ has partial derivatives almost
everywhere. Put $f_{\overline{z}} = \left(f_x + if_y\right)/2$ and
$f_z = \left(f_x - if_y\right)/2.$ The {\it complex dilatation} of
$f$ at $z\in D$ is defined as follows:
$\mu(z)=\mu_f(z)=f_{\overline{z}}/f_z$ for $f_z\ne 0$ and $\mu(z)=0$
otherwise. The {\it maximal dilatation} of $f$ at $z$ is the
following function:
\begin{equation}\label{eq1}
K_{\mu}(z)=K_{\mu_f}(z)=\quad\frac{1+|\mu (z)|}{1-|\mu\,(z)|}\,.
\end{equation}
Note that the Jacobian of $f$ at $z\in D$ may be calculated
according to the relation
$$J(z,
f)=|f_z|^2-|f_{\overline{z}}|^2\,.$$ Since we assume that the map
$f$ is sense preserving, the Jacobian of this map is positive at all
points of its differentiability. Let ${\Bbb D}=\{z\in {\Bbb C}:
|z|<1\},$ and let $\mu:D\rightarrow {\Bbb D}$ be a Lebesgue
measurable function. Without reference to some mapping $f,$ we
define the {\it maximal dilatation} corresponding to its complex
dilatation~$\mu$ by~(\ref{eq1}).
It is easy to see that
$$K_{\mu_f}(z)=\frac{|f_z|+|f_{\overline{z}}|}{|f_z|-|f_{\overline{z}}|}$$
whenever partial derivatives of $f$ exist at $z\in D$ and, in
addition, $J(z, f)\ne 0.$

Let $D$ be a domain in ${\Bbb R}^n,$ $n\geqslant 2.$ Recall some
definitions (see, for example,~\cite{KR$_1$}, \cite{KR$_2$},
\cite{IS} or \cite{SSI}). Let $\omega$ be an open set in ${\Bbb
R}^k$, $k=1,\ldots,n-1$. A continuous mapping
$\sigma\colon\omega\rightarrow{\Bbb R}^n$ is called a {\it
$k$-dimensional surface} in ${\Bbb R}^n$. A {\it surface} is an
arbitrary $(n-1)$-dimensional surface $\sigma$ in ${\Bbb R}^n.$ A
surface $\sigma$ is called {\it a Jordan surface}, if
$\sigma(x)\ne\sigma(y)$ for $x\ne y$. In the following, we will use
$\sigma$ instead of $\sigma(\omega)\subset {\Bbb R}^n,$
$\overline{\sigma}$ instead of $\overline{\sigma(\omega)}$ and
$\partial\sigma$ instead of
$\overline{\sigma(\omega)}\setminus\sigma(\omega).$ A Jordan surface
$\sigma\colon\omega\rightarrow D$ is called a {\it cut} of $D$, if
$\sigma$ separates $D,$ that is $D\setminus \sigma$ has more than
one component, $\partial\sigma\cap D=\varnothing$ and
$\partial\sigma\cap\partial D\ne\varnothing$.

A sequence of cuts $\sigma_1,\sigma_2,\ldots,\sigma_m,\ldots$ in $D$
is called {\it a chain}, if:

(i) the set $\sigma_{m+1}$ is contained in exactly one component
$d_m$ of the set $D\setminus \sigma_m,$ wherein $\sigma_{m-1}\subset
D\setminus (\sigma_m\cup d_m)$; (ii)
$\bigcap\limits_{m=1}^{\infty}\,d_m=\varnothing.$

Two chains of cuts  $\{\sigma_m\}$ and $\{\sigma_k^{\,\prime}\}$ are
called {\it equivalent}, if for each $m=1,2,\ldots$ the domain $d_m$
contains all the domains $d_k^{\,\prime},$ except for a finite
number, and for each $k=1,2,\ldots$ the domain $d_k^{\,\prime}$ also
contains all domains $d_m,$ except for a finite number.

The {\it end} of the domain $D$ is the class of equivalent chains of
cuts in $D$. Let $K$ be the end of $D$ in ${\Bbb R}^n$, then the set
$I(K)=\bigcap\limits_{m=1}\limits^{\infty}\overline{d_m}$ is called
{\it the impression of the end} $K$. Throughout the paper,
$\Gamma(E, F, D)$ denotes the family of all paths $\gamma\colon[a,
b]\rightarrow \overline{{\Bbb R}^n}$ such that $\gamma(a)\in E,$
$\gamma(b)\in F$ and $\gamma(t)\in D$ for every $t\in[a, b].$ In
what follows, $M$ denotes the modulus of a family of paths, and the
element $dm(x)$ corresponds to the Lebesgue measure in ${\Bbb R}^n,$
$n\geqslant 2,$ see~\cite{Va}. Following~\cite{Na$_2$}, we say that
the end $K$ is {\it a prime end}, if $K$ contains a chain of cuts
$\{\sigma_m\}$ such that
$\lim\limits_{m\rightarrow\infty}M(\Gamma(C, \sigma_m, D))=0$ for
some continuum $C$ in $D.$ In the following, the following notation
is used: the set of prime ends corresponding to the domain $D,$ is
denoted by $E_D,$ and the completion of the domain $D$ by its prime
ends is denoted $\overline{D}_P.$

Consider the following definition, which goes back to
N\"akki~\cite{Na$_2$}, see also~\cite{KR$_1$}--\cite{KR$_2$}. We say
that the boundary of the domain $D$ in ${\Bbb R}^n$ is {\it locally
quasiconformal}, if each point $x_0\in\partial D$ has a neighborhood
$U$ in ${\Bbb R}^n$, which can be mapped by a quasiconformal mapping
$\varphi$ onto the unit ball ${\Bbb B}^n\subset{\Bbb R}^n$ so that
$\varphi(\partial D\cap U)$ is the intersection of ${\Bbb B}^n$ with
the coordinate hyperplane.

\medskip
For the sets $A, B\subset{\Bbb R}^n$ we set, as usual,
$${\rm diam}\,A=\sup\limits_{x, y\in A}|x-y|\,,\quad {\rm dist}\,(A, B)=\inf\limits_{x\in A,
y\in B}|x-y|\,.$$
Sometimes we also write $d(A)$ instead of ${\rm diam}\,A$ and $d(A,
B)$ instead of ${\rm dist\,}(A, B),$ if no misunderstanding is
possible. The sequence of cuts $\sigma_m,$ $m=1,2,\ldots ,$ is
called {\it regular,} if
$\overline{\sigma_m}\cap\overline{\sigma_{m+1}}=\varnothing$ for
$m\in {\Bbb N}$ and, in addition, $d(\sigma_{m})\rightarrow 0$ as
$m\rightarrow\infty.$ If the end $K$ contains at least one regular
chain, then $K$ will be called {\it regular}. We say that a bounded
domain $D$ in ${\Bbb R}^n$ is {\it regular}, if $D$ can be
quasiconformally mapped to a domain with a locally quasiconformal
boundary whose closure is a compact in ${\Bbb R}^n,$ and, besides
that, every prime end in $D$ is regular. Note that space
$\overline{D}_P=D\cup E_D$ is metric, which can be demonstrated as
follows. If $g:D_0\rightarrow D$ is a quasiconformal mapping of a
domain $D_0$ with a locally quasiconformal boundary onto some domain
$D,$ then for $x, y\in \overline{D}_P$ we put:
\begin{equation}\label{eq5}
\rho(x, y):=|g^{\,-1}(x)-g^{\,-1}(y)|\,,
\end{equation}
where the element $g^{\,-1}(x),$ $x\in E_D,$ is to be understood as
some (single) boundary point of the domain $D_0.$ The specified
boundary point is unique and well-defined by~\cite[Theorem~2.1,
Remark~2.1]{IS}, cf.~\cite[Theorem~4.1]{Na$_2$}. It is easy to
verify that~$\rho$ in~(\ref{eq5}) is a metric on $\overline{D}_P,$
and that the topology on $\overline{D}_P,$ defined by such a method,
does not depend on the choice of the map $g$ with the indicated
property.

We say that a sequence $x_m\in D,$ $m=1,2,\ldots,$ converges to a
prime end of $P\in E_D$ as $m\rightarrow\infty, $ write
$x_m\rightarrow P$ as $m\rightarrow\infty,$ if for any $k\in {\Bbb
N}$ all elements $x_m$ belong to $d_k$ except for a finite number.
Here $d_k$ denotes a sequence of nested domains corresponding to the
definition of the prime end $P.$ Note that for a homeomorphism of a
domain $D$ onto $D^{\,\prime},$ the end of the domain $D$ uniquely
corresponds to some sequence of nested domains in the image under
the mapping.

\medskip
Consider the following Cauchy problem:
\begin{equation}\label{eq2C}
f_{\overline{z}}=\mu(z)\cdot f_z\,,
\end{equation}
\begin{equation}\label{eq1A}
\lim\limits_{\zeta\rightarrow P}{\rm
Re\,}f(\zeta)=\varphi(P)\qquad\forall\,\, P\in E_D\,,
\end{equation}
where $\varphi: E_D\rightarrow {\Bbb R}$  is a predefined continuous
function. In what follows, we assume that $D$ is some simply
connected domain in ${\Bbb C}.$ The solution of the
problem~(\ref{eq2C})--(\ref{eq1A}) is called {\it regular,} if one
of two is fulfilled: or $f(z)=const$ in $D,$ or $f$ is an open
discrete $W_{\rm loc}^{1, 1}(D)$-mapping such that $J(z, f)\ne 0$
for almost any $z\in D.$

\medskip
Given $z_0\in D,$ a function $\varphi: E_D\rightarrow {\Bbb R},$ a
function $\Phi:\overline{{\Bbb R^{+}}}\rightarrow \overline{{\Bbb
R^{+}}}$ and a function $\mathcal{M}(\Omega)$ of open sets
$\Omega\subset D,$ we denote by $\frak{F}^{\mathcal{M}}_{\varphi,
\Phi, z_0}(D)$ the class of all regular solutions
$f:D\rightarrow{\Bbb C}$ of the Cauchy
problem~(\ref{eq2C})--(\ref{eq1A}) that satisfy the condition ${\rm
Im}\,f(z_0)=0$ and, in addition,
\begin{equation}\label{eq1D}
\int\limits_{\Omega}\Phi(K_{\mu}(z))\cdot\frac{dm(z)}{(1+|z|^2)^2}\leqslant
\mathcal{M}(\Omega)
\end{equation}
for any open set $\Omega\subset D.$ The following statement
generalizes~\cite[Theorem~2]{Dyb} to the case of arbitrary simply
connected domains.

\medskip
\begin{theorem}\label{th2A}
{\sl Let $D$ be some simply connected domain in ${\Bbb C},$ and let
$\Phi:\overline{{\Bbb R^{+}}}\rightarrow \overline{{\Bbb R^{+}}}$ be
a continuous increasing convex function, which satisfies the
condition
$$
\int\limits_{\delta}^{\infty}\frac{d\tau}{\tau\Phi^{\,-1}(\tau)}=\infty
$$
for some $\delta>\Phi(0).$ Assume that the function $\mathcal{M}$ is
bounded, and the function $\varphi$ in~(\ref{eq1A}) is continuous.
Then the family $\frak{F}^{\mathcal{M}}_{\varphi, \Phi, z_0}(D)$ is
compact in~$\overline{D}_P.$}
\end{theorem}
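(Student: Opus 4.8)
The plan is to establish compactness by combining a normal-family (equicontinuity plus closedness) argument, using the integral constraint \eqref{eq1D} and the convexity-type condition on $\Phi$ to control the maximal dilatations uniformly, and then transporting everything to the reference domain $D_0$ via the fixed quasiconformal map $g$ from the metric \eqref{eq5}. Concretely, I would first reduce to a setting where boundary behavior is accessible: since $D$ is simply connected, pick a conformal (hence quasiconformal) map $g:D_0\to D$ where $D_0$ has locally quasiconformal boundary, and study the conjugated family $\{f\circ g\}$ on $D_0$. The condition \eqref{eq1D} is written with the spherical weight $1/(1+|z|^2)^2$ precisely so that it is invariant (up to the quasiconformal distortion of $g$) under this change of variables, so the bounded majorant $\mathcal{M}$ yields a uniform integral bound for the conjugated maps on $D_0$.

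Second, I would invoke the equicontinuity machinery for mappings satisfying integral dilatation constraints. The hypothesis that $\int_\delta^\infty \frac{d\tau}{\tau\,\Phi^{-1}(\tau)}=\infty$ together with convexity and monotonicity of $\Phi$ is the standard gateway (via a Jensen/convexity estimate) to turning the bound $\int \Phi(K_\mu)\,\frac{dm}{(1+|z|^2)^2}\le \mathcal{M}$ into an upper bound for path-modulus integrals of the form $\int Q(z)\,\eta(|z-z_0|)^n\,dm$, i.e. into a $Q$-mapping / ring-type modulus inequality with a locally integrable $Q$. From such a modulus inequality one gets, by the results referenced earlier in the paper on mappings with direct and inverse modulus conditions (and the prime-end boundary theory of \cite{IS}, \cite{Na$_2$}), equicontinuity of the family both in the interior of $D$ and, crucially, up to $\overline{D}_P$ in the metric $\rho$. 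The normalizations ${\rm Re}\,f\to\varphi(P)$ on $E_D$ and ${\rm Im}\,f(z_0)=0$ remove the two real degrees of freedom, preventing the family from escaping to $\infty$ or from drifting by an additive constant, so the family is also uniformly bounded.

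Third, with equicontinuity and uniform boundedness in hand on the compact metric space $\overline{D}_P$, the Arzel\`a--Ascoli theorem provides, from any sequence $f_j\in\frak{F}^{\mathcal{M}}_{\varphi,\Phi,z_0}(D)$, a subsequence converging uniformly on $\overline{D}_P$ to some continuous limit $f_0$. It then remains to show the limit lies in the class, i.e. that $f_0$ is again a regular solution of \eqref{eq2C} with the same boundary data and satisfies \eqref{eq1D} with the same $\mathcal{M}$. Closedness under \eqref{eq1D} follows from lower semicontinuity of the dilatation functional under locally uniform convergence (convexity of $\Phi$ again helps, via a weak-$*$/Fatou argument on $K_{\mu_{f_j}}$), the Beltrami equation passes to the limit by the $W^{1,1}_{\rm loc}$-convergence that the modulus bounds guarantee, and the boundary condition \eqref{eq1A} survives because convergence is uniform all the way to $E_D$; the dichotomy in the definition of ``regular'' (either $f_0\equiv const$ or $f_0$ is open, discrete with nonvanishing Jacobian) is handled by the standard Hurwitz-type alternative for such mappings.

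The main obstacle I expect is the boundary equicontinuity in the prime-end metric $\rho$: interior equicontinuity from a locally integrable modulus majorant is routine, but controlling the oscillation of $f$ as $z$ approaches a prime end $P\in E_D$ requires the regularity of prime ends and the locally quasiconformal boundary of $D_0$ to convert the vanishing-modulus condition $\lim_m M(\Gamma(C,\sigma_m,D))=0$ into a genuine modulus-of-continuity estimate in $\rho$. This is where the delicate interplay between the integral condition on $\Phi$, the geometry of the cuts $\sigma_m$, and the single-valuedness of the boundary correspondence guaranteed by \cite[Theorem~2.1]{IS} must be made quantitative and uniform over the whole family.
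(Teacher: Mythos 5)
Your proposal has a genuine gap, and it is the central one: you never factor the solutions. The maps in $\frak{F}^{\mathcal{M}}_{\varphi, \Phi, z_0}(D)$ are only open and discrete, not homeomorphisms, and the boundary equicontinuity machinery for mappings with modulus majorants (Theorem~\ref{th1} and its relatives) is stated for homeomorphisms with normalizations of the type $h(f(a),\partial f(D))\geqslant\delta$; Remark~\ref{rem2} even exhibits $f_n(z)=z^n$ to show that the analogue fails for branched maps with only an interior normalization. The paper's proof therefore begins with the Stoilow factorization $f_m=F_m\circ G_m$, where $G_m:D\rightarrow{\Bbb D}$ is a normalized Sobolev homeomorphism carrying all of the dilatation (so $K_{\mu_{F_m\circ G_m}}=K_{\mu_{G_m}}$) and $F_m$ is analytic on ${\Bbb D}$. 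The integral condition on $\Phi$ and the modulus/prime-end theory are applied to $G_m$ and to $G_m^{\,-1}$ (via the inverse Poletsky inequality, Theorem~\ref{th4}), not to $f_m$ itself. Your plan to run the direct and inverse modulus arguments on $f$ (or on $f\circ g$) directly does not go through.

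The second fatal step is your claim that the family is uniformly bounded and equicontinuous on $\overline{D}_P$, so that Arzel\`a--Ascoli applies there. The boundary data~(\ref{eq1A}) prescribes only ${\rm Re}\,f$; the imaginary part is a harmonic conjugate of a merely continuous boundary function and need not be bounded or have continuous boundary values, so the solutions themselves are in general neither uniformly bounded nor equicontinuous up to $E_D$. The paper instead recovers the analytic factors $F_{m_k}$ from their real boundary values by the Schwarz formula~(\ref{eq4A}), passes to the limit in that integral representation using the uniform convergence $\overline{G}^{\,-1}_{m_k}\rightarrow\overline{G}^{\,-1}$ on $\partial{\Bbb D}$, and obtains only \emph{locally} uniform convergence of $f_{m_k}=F_{m_k}\circ G_{m_k}$ in $D$, together with the boundary relation~(\ref{eq15A}) for the limit. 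A further, more minor, issue is your reduction via conjugation by a quasiconformal $g:D_0\rightarrow D$: composing with $g$ multiplies dilatations, and $\Phi$ of a product is not controlled by~(\ref{eq1D}) without an additional submultiplicativity assumption on $\Phi$, so the asserted invariance of the integral constraint is unjustified. Your closing remarks on lower semicontinuity of the dilatation functional and the Hurwitz-type alternative do correspond to genuine steps of the paper (items \textbf{VI} and \textbf{IX}), but they cannot be reached without first repairing the two gaps above.
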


\section{Convergence theorems for mappings with upper estimates for modulus distortion }

The proof of the main result is based on the theorems on the global
behavior of mappings satisfying the weight Poletsky inequality.
Results of a similar type in some other situations have been
obtained by us before, see, for example,  \cite{Sev$_1$},
\cite{SevSkv$_1$} and \cite{SevSkv$_3$}. The case we will consider
below concerns regular domains and mappings with one normalization
condition. This case is considered for the first time in this degree
of generality.

\medskip
Given $p\geqslant 1,$ $M_p$ denotes the $p$-modulus of a family of
paths, and the element $dm(x)$ corresponds to a Lebesgue measure in
${\Bbb R}^n,$ $n\geqslant 2,$ see~\cite{Va}. In what follows, we
usually write $M(\Gamma)$ instead of $M_n(\Gamma).$ Everywhere
below, unless otherwise stated, the boundary and the closure of a
set are understood in the sense of an extended Euclidean space
$\overline{{\Bbb R}^n}.$ Let $x_0\in\overline{D},$ $x_0\ne\infty,$
$$S(x_0,r) = \{
x\,\in\,{\Bbb R}^n : |x-x_0|=r\}\,, S_i=S(x_0, r_i)\,,\quad
i=1,2\,,$$
\begin{equation}\label{eq6} A=A(x_0, r_1, r_2)=\{ x\,\in\,{\Bbb R}^n :
r_1<|x-x_0|<r_2\}\,.
\end{equation}
Everywhere below, unless otherwise stated, the closure
$\overline{A}$ and the boundary $\partial A $ of the set $A$ are
understood in the topology of the space $\overline{{\Bbb R}^n}={\Bbb
R}^n\cup\{\infty\}.$ Let $Q:{\Bbb R}^n\rightarrow {\Bbb R}^n$ be a
Lebesgue measurable function satisfying the condition $Q(x)\equiv 0$
for $x\in{\Bbb R}^n\setminus D,$ and let $p\geqslant 1.$ Given sets
$E$ and $F$ and a given domain $D$ in $\overline{{\Bbb R}^n}={\Bbb
R}^n\cup \{\infty\},$ we denote by $\Gamma(E, F, D)$ the family of
all paths $\gamma:[0, 1]\rightarrow \overline{{\Bbb R}^n}$ joining
$E$ and $F$ in $D,$ that is, $\gamma(0)\in E,$ $\gamma(1)\in F$ and
$\gamma(t)\in D$ for all $t\in (0, 1).$ According
to~\cite[Сhap.~7.6]{MRSY}, a mapping $f:D\rightarrow \overline{{\Bbb
R}^n}$ is called a {\it ring $Q$-mapping at the point $x_0\in
\overline{D}\setminus \{\infty\}$ with respect to $p$-modulus}, if
the condition
\begin{equation} \label{eq3*!gl0}
M_p(f(\Gamma(S_1, S_2, D)))\leqslant \int\limits_{A\cap D} Q(x)\cdot
\eta^p (|x-x_0|)\, dm(x)
\end{equation}
holds for all $0<r_1<r_2<d_0:=\sup\limits_{x\in D}|x-x_0|$ and all
Lebesgue measurable functions $\eta:(r_1, r_2)\rightarrow [0,
\infty]$ such that
\begin{equation}\label{eq*3gl0}
\int\limits_{r_1}^{r_2}\eta(r)\,dr\geqslant 1\,.
\end{equation}
The mapping $f:D\rightarrow \overline{{\Bbb R}^n}$ is called a {\it
ring $Q$-mapping in $\overline{D}\setminus\{\infty\}$ with respect
to $p$-modulus} if~(\ref{eq3*!gl0}) holds for any~$x_0\in
\overline{D}\setminus\{\infty\}.$ This definition can also be
applied to the point~$x_0=\infty$ by inversion:
$\varphi(x)=\frac{x}{|x|^2},$ $\infty\mapsto 0.$ In what follows,
$h$ denotes the so-called chordal metric defined by the equalities
\begin{equation}\label{eq1E}
h(x,y)=\frac{|x-y|}{\sqrt{1+{|x|}^2} \sqrt{1+{|y|}^2}}\,,\quad x\ne
\infty\ne y\,, \quad\,h(x,\infty)=\frac{1}{\sqrt{1+{|x|}^2}}\,.
\end{equation}
For a given set $E\subset\overline{{\Bbb R}^n},$ we set
\begin{equation}\label{eq9C}
h(E):=\sup\limits_{x,y\in E}h(x, y)\,,
\end{equation}
The quantity $h(E)$ in~(\ref{eq9C}) is called the {\it chordal
diameter} of the set $E.$ For given sets $A, B\subset
\overline{{\Bbb R}^n},$ we put
$h(A, B)=\inf\limits_{x\in A, y\in B}h(x, y),$
where $h$ is a chordal metric defined in~(\ref{eq1E}).

\medskip
Let $I$ be a fixed set of indices and let $D_i,$ $i\in I,$ be some
sequence of domains. Following~\cite[Sect.~2.4]{NP}, we say that a
family of domains $\{D_i\}_{i\in I}$ is {\it equi-uniform with
respect to $p$-modulus} if for any $r> 0$ there exists a number
$\delta> 0$ such that the inequality
\begin{equation}\label{eq17***}
M_p(\Gamma(F^{\,*},F, D_i))\geqslant \delta
\end{equation}
holds for any $i\in I$ and any continua $F, F^*\subset D$ such that
$h(F)\geqslant r$ and $h(F^{\,*})\geqslant r.$

\medskip
Given a Lebesgue measurable function $Q:{\Bbb R}^n\rightarrow [0,
\infty]$ and a point $x_0\in {\Bbb R}^n$ we set
\begin{equation}\label{eq10}
q_{x_0}(t)=\frac{1}{\omega_{n-1}r^{n-1}} \int\limits_{S(x_0,
t)}Q(x)\,d\mathcal{H}^{n-1}\,,
\end{equation}
where $\mathcal{H}^{n-1}$ denotes $(n-1)$-dimensional Hausdorff
measure. The following lemma was formulated and proved
in~\cite{Sev$_5$}; however, for completeness of presentation, we
present it in full.

\medskip
\begin{lemma}\label{lem1}
{\sl\, Let $1\leqslant p\leqslant n,$ and let $\Phi:[0,
\infty]\rightarrow [0, \infty] $ be a strictly increasing convex
function such that the relation
\begin{equation}\label{eq2} \int\limits_{\delta_0}^{\infty}
\frac{d\tau}{\tau\left[\Phi^{-1}(\tau)\right]^{\frac{1}{p-1}}}=
\infty
\end{equation}
holds for some $\delta_0>\tau_0:=\Phi(0).$ Let $\frak{Q}$ be a
family of functions $Q:{\Bbb R}^n\rightarrow [0, \infty]$ such that
\begin{equation}\label{eq5A}
\int\limits_D\Phi(Q(x))\frac{dm(x)}{\left(1+|x|^2\right)^n}\
\leqslant M_0<\infty
\end{equation}
for some $0<M_0<\infty.$ Now, for any $0<r_0<1$ and for every
$\sigma>0$ there exists $0<r_*=r_*(\sigma, r_0, \Phi)<r_0$ such that
$$
\int\limits_{\varepsilon}^{r_0}\frac{dt}{t^{\frac{n-1}{p-1}}q^{\frac{1}{p-1}}_{x_0}(t)}\geqslant
\sigma\,,\qquad \varepsilon\in (0, r_*)\,,
$$
for any $Q\in \frak{Q}.$ }
\end{lemma}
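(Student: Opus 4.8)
The plan is to distill from~(\ref{eq5A}) a single one-dimensional bound on the spherical means $q_{x_0}$, uniform over $\frak{Q}$, and then to prove a purely one-dimensional divergence estimate into which the hypothesis~(\ref{eq2}) enters decisively. Throughout I take $p>1$, which is implicit in the finiteness of the exponent $1/(p-1)$. First I would localize. Fix $r_0\in(0,1)$; for $x\in B(x_0,r_0)$ one has $(1+|x|^2)^n\leqslant c_0:=(1+(|x_0|+r_0)^2)^n$, so~(\ref{eq5A}) yields $\int_{B(x_0,r_0)}\Phi(Q(x))\,dm(x)\leqslant c_0M_0$ for every $Q\in\frak{Q}$. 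Passing to spherical coordinates about $x_0$ and applying Jensen's inequality to the convex $\Phi$ on each sphere $S(x_0,t)$ with the normalized measure $d\mathcal{H}^{n-1}/(\omega_{n-1}t^{n-1})$, I obtain from the definition~(\ref{eq10}) the pointwise bound $\Phi(q_{x_0}(t))\leqslant\frac{1}{\omega_{n-1}t^{n-1}}\int_{S(x_0,t)}\Phi(Q)\,d\mathcal{H}^{n-1}$. Multiplying by $\omega_{n-1}t^{n-1}$ and integrating over $t\in(0,r_0)$ gives
\[
\int\limits_0^{r_0}t^{n-1}\Phi(q_{x_0}(t))\,dt\leqslant M_1:=\frac{c_0M_0}{\omega_{n-1}}\,,
\]
where $M_1$ depends only on $x_0,r_0,M_0,n$. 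This is the only way~(\ref{eq5A}) enters, so every subsequent estimate is automatically uniform over $\frak{Q}$.

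Next I would reduce to a canonical one-dimensional integral. Since $t^{(n-1)/(p-1)}q_{x_0}^{1/(p-1)}(t)=[t^{n-1}q_{x_0}(t)]^{1/(p-1)}$, the substitution $x=t^n/n$ (so that $t^{n-1}\,dt=dx$) rewrites the target integral as $n^{-\gamma}\int_{R_1}^{R_0}x^{-\gamma}g(x)^{-1/(p-1)}\,dx$, where $g(x)=q_{x_0}((nx)^{1/n})$, $R_0=r_0^n/n$, $R_1=\varepsilon^n/n$ and $\gamma=\frac{p(n-1)}{n(p-1)}$, while the bound of the first step becomes exactly $\int_0^{R_0}\Phi(g(x))\,dx\leqslant M_1$. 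A direct computation shows $\gamma\geqslant1$ precisely because $p\leqslant n$, and since $R_0<1$ we have $x^{-\gamma}\geqslant x^{-1}$ on $(0,R_0)$; hence it suffices to bound $\int_{R_1}^{R_0}\frac{dx}{x\,g(x)^{1/(p-1)}}$ from below and to show it tends to $+\infty$ as $R_1\to0$ at a rate controlled only by $M_1,\Phi,R_0$.

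The crux is a scale-dependent threshold. Write $\tau(x)=\Phi(g(x))$, so $g(x)=\Phi^{-1}(\tau(x))$ and $\int_0^{R_0}\tau(x)\,dx\leqslant M_1$. Fix a large constant $C\geqslant\delta_0R_0$ and split $(R_1,R_0)$ into the good set $E=\{x:\tau(x)\leqslant C/x\}$ and the bad set $B=\{x:\tau(x)>C/x\}$. On $E$, monotonicity of $\Phi^{-1}$ gives $g(x)^{-1/(p-1)}\geqslant[\Phi^{-1}(C/x)]^{-1/(p-1)}$, and the substitution $\tau=C/x$ (whence $dx/x=-d\tau/\tau$) identifies $\int_{R_1}^{R_0}\frac{dx}{x[\Phi^{-1}(C/x)]^{1/(p-1)}}$ with $\int_{C/R_0}^{C/R_1}\frac{d\tau}{\tau[\Phi^{-1}(\tau)]^{1/(p-1)}}$, which by~(\ref{eq2}) diverges as $R_1\to0$. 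On $B$ one has $x^{-1}<\tau(x)/C$, whence $\int_B\frac{dx}{x}\leqslant\frac1C\int_0^{R_0}\tau(x)\,dx\leqslant\frac{M_1}{C}$; since $[\Phi^{-1}(C/x)]^{-1/(p-1)}$ is largest at $x=R_0$, the contribution of $B$ is at most $[\Phi^{-1}(C/R_0)]^{-1/(p-1)}M_1/C$. Combining these,
\[
\int\limits_{R_1}^{R_0}\frac{dx}{x\,g(x)^{1/(p-1)}}\geqslant\int\limits_{C/R_0}^{C/R_1}\frac{d\tau}{\tau[\Phi^{-1}(\tau)]^{1/(p-1)}}-[\Phi^{-1}(C/R_0)]^{-1/(p-1)}\frac{M_1}{C}\,.
\]

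Given $\sigma>0$, I would first choose $C$ so large that the subtracted term is $\leqslant1$ (it tends to $0$ as $C\to\infty$), and then, using the divergence of the tail $\int_{C/R_0}^{\infty}$, choose $R_*>0$ so small that for every $R_1<R_*$ the main term exceeds $n^{\gamma}\sigma+1$; unwinding $R_1=\varepsilon^n/n$ and the factor $n^{-\gamma}$ then yields the required $r_*=r_*(\sigma,r_0,\Phi)$, with the bound holding for all $Q\in\frak{Q}$ simultaneously because $M_1$ is uniform. I expect this last step to be the main obstacle, and specifically the choice of the $x$-dependent cut $\tau(x)=C/x$, which matches $\frac{dx}{x}$ with $\frac{d\tau}{\tau}$ so as to reproduce the divergent integrand of~(\ref{eq2}) while letting the mass constraint $\int\tau\,dx\leqslant M_1$ absorb the bad set: a constant threshold fails, since $1/x$ is non-integrable at $0$ and $\int_B dx/x$ cannot be controlled by the Lebesgue measure of $B$ alone.
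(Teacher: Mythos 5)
Your argument is correct, and it is self-contained where the paper's is not. The paper's proof reduces the exponent $\frac{n-1}{p-1}$ to $1$ exactly as you do (using $r<r_0<1$ together with $p\leqslant n$), rescales to the unit ball by $t=r/r_0$, and then quotes Lemma~3.1 of~\cite{RS} wholesale for the key lower bound
$\int_{\varepsilon/r_0}^1\frac{dt}{t\widetilde{q}_0^{\,1/(p-1)}(t)}\geqslant
\frac1n\int_{eM_*}^{M_*r_0^n/\varepsilon^n}\frac{d\tau}{\tau[\Phi^{-1}(\tau)]^{1/(p-1)}}$,
finishing by bounding the mean value $M_*$ of $\Phi(Q)$ over the annulus from above by $2\beta(x_0)M_0/(\Omega_nr_0^n)$ via~(\ref{eq5A}) and from below by $\Phi(0)$. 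You instead prove the analogous inequality from scratch: Jensen's inequality on spheres converts~(\ref{eq5A}) into the one-dimensional mass bound $\int_0^{R_0}\Phi(g(x))\,dx\leqslant M_1$ after the substitution $x=t^n/n$, and the scale-dependent cut $\tau(x)=C/x$ --- chosen precisely so that $dx/x$ matches $d\tau/\tau$ --- reproduces the divergent integral of~(\ref{eq2}) on the good set, while the $L^1$ bound on $\Phi(g)$ absorbs the bad set; your choice of $C\geqslant\delta_0R_0$ correctly keeps the lower limit $C/R_0$ above $\delta_0$, so that $\Phi^{-1}(C/x)$ stays bounded away from zero and the tail of~(\ref{eq2}) still diverges. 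This is essentially the internal mechanism of the cited result of~\cite{RS}, so the two routes buy the same estimate; yours has the advantage of being verifiable without consulting the reference, at the cost of the computation the paper outsources. One shared cosmetic point: both proofs produce an $r_*$ that also depends on $x_0$ (through $\beta(x_0)$ in the paper, through your constant $c_0$), a dependence the statement of the lemma suppresses but which is harmless in the applications, where $x_0$ ranges over a bounded set.
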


\medskip
\begin{proof}
Using the substitution of variables $t=r/r_0, $ for any
$\varepsilon\in (0, r_0) $ we obtain that
\begin{equation}\label{eq34}
\int\limits_{\varepsilon}^{r_0}\frac{dr}{r^{\frac{n-1}{p-1}}q^{\frac{1}{p-1}}_{x_0}(r)}
\geqslant
\int\limits_{\varepsilon}^{r_0}\frac{dr}{rq^{\frac{1}{p-1}}_{x_0}(r)}
=\int\limits_{\varepsilon/r_0}^1\frac{dt}{tq^{\frac{1}{p-1}}_{x_0}(tr_0)}
=\int\limits_{\varepsilon/r_0}^1\frac{dt}{t\widetilde{q}^{\frac{1}{p-1}}_{0}(t)}\,,
\end{equation}
where $\widetilde{q}_0(t)$ is the average integral value of the
function $\widetilde{Q}(x):=Q(r_0x+x_0)$ over the sphere $|x|=t,$
see the ratio~(\ref{eq10}). Then, according to~\cite[Lemma~3.1]{RS},
\begin{equation}\label{eq35}
\int\limits_{\varepsilon/r_0}^1\frac{dt}{t\widetilde{q}^{\frac{1}{p-1}}_{0}(t)}\geqslant
\frac{1}{n}\int\limits_{eM_*\left(\varepsilon/r_0\right)}^{\frac{M_*\left(\varepsilon/r_0\right)
r_0^n}{\varepsilon^n}}\frac{d\tau}
{\tau\left[\Phi^{-1}(\tau)\right]^{\frac{1}{p-1}}}\,,
\end{equation}
where
$$M_*\left(\varepsilon/r_0\right)=
\frac{1}{\Omega_n\left(1-\left(\varepsilon/r_0\right)^n\right)}
\int\limits_{A\left(0, \varepsilon/r_0, 1\right)} \Phi\left(Q(r_0x
+x_0)\right)\,dm(x)=$$
$$=
\frac{1}{\Omega_n\left(r_0^n-\varepsilon^n\right)}
\int\limits_{A\left(x_0, \varepsilon, r_0\right)}
\Phi\left(Q(x)\right)\,dm(x)$$
and~$A(x_0, \varepsilon, r_0)$ is defined in~(\ref{eq6}) for
$r_1:=\varepsilon$ and $r_2:=r_0.$ Observe that $|x|\leqslant
|x-x_0|+ |x_0|\leqslant r_0+|x_0|$ for any~$x\in A(x_0, \varepsilon,
r_0).$ Thus
$$M_*\left(\varepsilon/r_0\right)\leqslant \frac{\beta(x_0)}
{\Omega_n\left(r_0^n-\varepsilon^n\right)}\int\limits_{A(x_0,
\varepsilon, r_0)}
\Phi(Q(x))\frac{dm(x)}{\left(1+|x|^2\right)^n}\,,$$
where $\beta(x_0)=\left(1+(r_0+|x_0|)^2\right)^n.$ Therefore,
$$M_*\left(\varepsilon/r_0\right)\leqslant \frac{2\beta(x_0)}{\Omega_n r^n_0}M_0$$
for $\varepsilon\leqslant r_0 /\sqrt[n]{2},$ where $M_0$ is a
constant in~(\ref{eq5A}).
Observe that
$$M_*\left(\varepsilon/r_0\right)>\Phi(0)>0\,,$$
because $\Phi$ is increasing. Now, by~(\ref{eq34}) and~(\ref{eq35})
we obtain that
\begin{equation}\label{eq12}\int\limits_{\varepsilon}^{r_0}\frac{dr}{r^{\frac{n-1}{p-1}}q^{\frac{1}{p-1}}_{x_0}(r)}
\geqslant
\frac{1}{n}\int\limits_{\frac{2\beta(x_0)M_0e}{\Omega_nr^n_0}}
^{\frac{\Phi(0)r^n_0}{\varepsilon^n}}\frac{d\tau}
{\tau\left[\Phi^{\,-1}(\tau)\right]^{\frac{1}{p-1}}}\,.
\end{equation}
The desired conclusion follows from~(\ref{eq12})
and~(\ref{eq2}).~$\Box$
\end{proof}

\medskip
Definitions of a condenser and its capacity may be found, for
example, in~\cite[Sect.~10, Ch.~II]{Ri}. The following statement
holds (see, e.g., \cite[item~(8.9)]{Ma}).

\medskip
\begin{proposition}\label{pr1A}
{\sl Let $E=(A, C)$ be a condenser and let $1<p<n.$ Then
$$
{\rm cap}_p\,E\geqslant n{\Omega}^{\frac{p}{n}}_n
\left(\frac{n-p}{p-1}\right)^{p-1}\left[m(C)\right]^{\frac{n-p}{n}}\,,
$$
where $m(C)$ is the Lebesgue measure of $C.$}
\end{proposition}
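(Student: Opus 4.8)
The plan is to bound ${\rm cap}_p\,E$ from below by reducing it, via the coarea formula, a fibrewise H\"older inequality, and the classical isoperimetric inequality, to a one-dimensional minimization whose optimum produces exactly the stated sharp constant. Throughout, $\Omega_n$ denotes the volume of the unit ball in ${\Bbb R}^n,$ and I abbreviate $\alpha:=p(n-1)/n.$

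First I would recall (from the cited definition) that ${\rm cap}_p\,E=\inf_u\int_A|\nabla u|^p\,dm,$ the infimum running over all admissible $u$ (Lipschitz, compactly supported in $A,$ with $u\geqslant 1$ on $C$); replacing $u$ by $\min\{\max\{u,0\},1\}$ does not increase the integral, so I may assume $0\leqslant u\leqslant 1$ and $u=1$ on $C.$ Writing $\mu(t):=m(\{u>t\})$ for the distribution function and $P(t):=\mathcal{H}^{n-1}(\{u=t\})$ for the perimeter of the level set, the coarea formula gives $\int_A|\nabla u|^p\,dm=\int_0^1\big(\int_{\{u=t\}}|\nabla u|^{p-1}\,d\mathcal{H}^{n-1}\big)\,dt,$ while $-\mu'(t)=\int_{\{u=t\}}|\nabla u|^{-1}\,d\mathcal{H}^{n-1}$ for a.e.\ $t.$ Applying H\"older's inequality on the fibre $\{u=t\}$ with exponents $p$ and $p/(p-1)$ yields $P(t)^p\leqslant\big(\int_{\{u=t\}}|\nabla u|^{p-1}\,d\mathcal{H}^{n-1}\big)\,(-\mu'(t))^{p-1},$ so that $\int_A|\nabla u|^p\,dm\geqslant\int_0^1 P(t)^p/(-\mu'(t))^{p-1}\,dt.$ Since $\{u=t\}=\partial\{u>t\}$ for a.e.\ $t,$ the isoperimetric inequality $P(t)\geqslant n\Omega_n^{1/n}\mu(t)^{(n-1)/n}$ applies and turns the right-hand side into $(n\Omega_n^{1/n})^p\int_0^1\mu(t)^{\alpha}/(-\mu'(t))^{p-1}\,dt.$

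Finally I would estimate the scalar integral $J:=\int_0^1\mu^{\alpha}/(-\mu')^{p-1}\,dt$ from below. Pairing $\big[\mu^{\alpha}/(-\mu')^{p-1}\big]^{1/p}$ with $\big[\mu^{-\alpha/(p-1)}(-\mu')\big]^{(p-1)/p},$ whose product is identically $1,$ H\"older gives $1=\int_0^1 1\,dt\leqslant J^{1/p}\big(\int_0^1\mu^{-\alpha/(p-1)}(-\mu')\,dt\big)^{(p-1)/p}.$ After the substitution $s=\mu(t)$ the last factor equals $\int s^{-\alpha/(p-1)}\,ds,$ which, using $\mu(t)\geqslant m(C)$ on $(0,1)$ together with the exponent identity $1-\alpha/(p-1)=-(n-p)/(n(p-1)),$ is bounded above by $\tfrac{n(p-1)}{n-p}\,[m(C)]^{-(n-p)/(n(p-1))}.$ Solving for $J$ and assembling the constants with the factor $(n\Omega_n^{1/n})^p=n^p\Omega_n^{p/n}$ produces precisely $n\Omega_n^{p/n}\big(\tfrac{n-p}{p-1}\big)^{p-1}[m(C)]^{(n-p)/n}.$

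The hard part is not the bookkeeping of exponents but the analytic justification of the level-set identities: $\mu$ is merely monotone, hence differentiable only almost everywhere and possibly carrying a singular part, and one must know that $\{u=t\}=\partial\{u>t\}$ is a rectifiable set of finite perimeter for a.e.\ $t$ (via Sard's theorem on smooth approximants, or the $BV$ coarea formula). Fortunately every inequality above points in the favourable direction, so the singular part of $\mu$ can only enlarge the capacity integral; what remains is the routine verification that the extremal profile $-\mu'\propto\mu^{\alpha/(p-1)}$ arising from the H\"older equality is the spherical condenser, which confirms that the constant is sharp.
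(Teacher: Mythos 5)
Your argument is correct, but there is nothing in the paper to compare it against: Proposition~3.1 is not proved there, it is quoted as a known fact with a pointer to Maz'ya \cite[item~(8.9)]{Ma}. What you have written is essentially a self-contained reconstruction of the standard isocapacitary argument contained in that reference: truncation of the admissible function, the coarea formula, H\"older on the fibres $\{u=t\}$ giving $\int_{\{u=t\}}|\nabla u|^{p-1}\,d\mathcal{H}^{n-1}\geqslant P(t)^p/(-\mu'(t))^{p-1},$ the isoperimetric inequality, and a one-dimensional H\"older step. The exponent bookkeeping is right: $1-\alpha/(p-1)=-(n-p)/(n(p-1))$ and $(n\Omega_n^{1/n})^p\bigl(\tfrac{n-p}{n(p-1)}\bigr)^{p-1}=n\Omega_n^{p/n}\bigl(\tfrac{n-p}{p-1}\bigr)^{p-1},$ so the constants assemble to exactly the stated bound, which is the capacity of the spherical condenser of the same inner volume. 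Two small points to make explicit in a final write-up: dispose first of the trivial case $m(C)=0,$ since $\mu(t)\geqslant m(C)>0$ is what makes $P(t)>0$ and lets you declare $J=+\infty$ (so the bound holds trivially) on the exceptional event that $-\mu'$ vanishes on a set of positive measure, where the pointwise product in your one-dimensional H\"older pairing is $0\cdot\infty$ rather than $1;$ and your closing remark on sharpness, while true, is not needed for the proposition. Your caveat about the singular part of $\mu$ is resolved the way you suspect: the derivative enters only through $\int_{\{u=t\}}|\nabla u|^{-1}\,d\mathcal{H}^{n-1}\leqslant-\mu'(t),$ which is the favourable direction, so no further argument is required there.
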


\medskip
Consider another auxiliary family of mappings. For $p\geqslant 1,$
given numbers $\delta>0,$ $0<M_0<\infty,$ a domain $D\subset {\Bbb
R}^n,$ $n\geqslant 2,$ and a given strictly increasing convex
function $\Phi\colon\overline{{\Bbb
R}^{+}}\rightarrow\overline{{\Bbb R}^{+}}$ denote by
$\frak{A}_{\Phi, p, \delta, M_0}(D)$ the family all open discrete
mappings $f:D\rightarrow {\Bbb R}^n$ satisfying
relations~(\ref{eq3*!gl0})--(\ref{eq*3gl0}) with some $Q=Q_f$ in
$\overline{D}$ with respect to $p$-modulus. The following statement
is true.

\medskip
\begin{lemma}\label{lem4}
{\sl\, Let $p\in (n-1, n),$ and let $\delta_0>\tau_0:=\Phi(0)$ be
such that the condition~(\ref{eq5A}) holds. Now the family
$\frak{A}_{\Phi, p, \delta, M_0}(D)$ is equicontinuous in $D.$ }
\end{lemma}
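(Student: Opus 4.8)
The plan is to argue by contradiction, pitting a uniform upper bound for the $p$-modulus of the images of spherical ring families against a lower bound forced by a failure of equicontinuity. Fix $x_0\in D$ and choose $r_0\in(0,1)$ with $\overline{B(x_0,r_0)}\subset D$; for $0<\varepsilon<r_0$ set $S_1=S(x_0,\varepsilon),$ $S_2=S(x_0,r_0)$ and $A=A(x_0,\varepsilon,r_0).$ First I would feed into the ring inequality~(\ref{eq3*!gl0}) the extremal test function
$$\eta(t)=\frac{1}{I}\cdot\frac{1}{t^{(n-1)/(p-1)}q^{1/(p-1)}_{x_0}(t)}\,,\qquad I=I(\varepsilon,r_0)=\int\limits_{\varepsilon}^{r_0}\frac{dt}{t^{(n-1)/(p-1)}q^{1/(p-1)}_{x_0}(t)}\,,$$
which is admissible in the sense of~(\ref{eq*3gl0}) because $\int_{\varepsilon}^{r_0}\eta(t)\,dt=1.$ Passing to spherical coordinates and using the definition~(\ref{eq10}) of $q_{x_0},$ the right-hand side of~(\ref{eq3*!gl0}) collapses to $\omega_{n-1}I^{-(p-1)},$ whence
$$M_p\big(f(\Gamma(S_1,S_2,D))\big)\leqslant\frac{\omega_{n-1}}{I(\varepsilon,r_0)^{p-1}}$$
for every $f\in\frak{A}_{\Phi,p,\delta,M_0}(D).$

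The decisive feature of this estimate is its uniformity. Since each $Q=Q_f$ obeys~(\ref{eq5A}) with one and the same $M_0,$ and $\Phi$ satisfies the divergence condition~(\ref{eq2}), Lemma~\ref{lem1} applies to the whole family at once and guarantees, for every $\sigma>0,$ a radius $r_*=r_*(\sigma,r_0,\Phi)$ with $I(\varepsilon,r_0)\geqslant\sigma$ as soon as $\varepsilon<r_*.$ Therefore $M_p(f(\Gamma(S_1,S_2,D)))\to 0$ as $\varepsilon\to0,$ uniformly in $f.$

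Next I would translate the equicontinuity assertion into the language of this modulus. Assume it fails at $x_0$: there are $\epsilon_0>0,$ maps $f_m\in\frak{A}_{\Phi,p,\delta,M_0}(D)$ and points $x_m\to x_0$ with $h(f_m(x_m),f_m(x_0))\geqslant\epsilon_0.$ Put $\varepsilon_m=|x_m-x_0|\to0;$ then $x_m,x_0\in\overline{B(x_0,\varepsilon_m)},$ so the continuum $f_m(\overline{B(x_0,\varepsilon_m)})$ has chordal diameter at least $\epsilon_0.$ Because each $f_m$ is open one has $\partial f_m(B)\subset f_m(\partial B)$ for every ball $B,$ so the chordal diameter of the image of a ball equals that of the image of its bounding sphere; hence both image continua $F_m=f_m(S(x_0,\varepsilon_m))$ and $F^{\,*}_m=f_m(S(x_0,r_0))$ satisfy $h(F_m)\geqslant\epsilon_0$ and $h(F^{\,*}_m)\geqslant\epsilon_0$ (for $F^{\,*}_m$ one uses $\overline{B(x_0,\varepsilon_m)}\subset\overline{B(x_0,r_0)}$).

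Finally, the images of the ring-crossing paths join $F_m$ to $F^{\,*}_m,$ and here the restriction $p>n-1$ is indispensable: for $p>n-1$ a non-degenerate continuum carries positive $p$-capacity, and for two continua of chordal diameter $\geqslant\epsilon_0$ the family joining them has $p$-modulus bounded below by some $\gamma(\epsilon_0)>0$ depending only on $n,$ $p,$ $\epsilon_0;$ the underlying capacity lower bounds are of the kind recorded in Proposition~\ref{pr1A}. Matched against the uniform decay $M_p(f_m(\Gamma(S(x_0,\varepsilon_m),S(x_0,r_0),D)))\to0,$ this produces a contradiction and proves equicontinuity at $x_0.$ The hard part is precisely this last comparison: one must convert the lower capacity bound for the pair $F_m,F^{\,*}_m$ into a genuine lower bound for the modulus of the image family $f_m(\Gamma(S_1,S_2,D))$ itself, which calls for the path-lifting machinery of open discrete mappings; it is the two-sided pinch $p\in(n-1,n)$ that simultaneously secures the extremal upper estimate (through the convergence in Lemma~\ref{lem1}, valid for $p\leqslant n$) and this continuum lower estimate (valid for $p>n-1$).
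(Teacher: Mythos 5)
Your first half — the admissible function $\eta=\psi/I$, the Fubini collapse of the right--hand side of~(\ref{eq3*!gl0}) to $\omega_{n-1}I^{1-p}$, and the uniform divergence $I(\varepsilon,r_0)\to\infty$ supplied by Lemma~\ref{lem1} — coincides with the paper's opening move (there packaged as the capacity inequality ${\rm cap}_p\,f(E)\leqslant\omega_{n-1}/I^{p-1}$ from~\cite{SalSev}). The second half, however, has a genuine gap, and you have in fact located it yourself. The family $f_m(\Gamma(S_1,S_2,D))$ is only a \emph{subfamily} of the family of all paths joining $F_m=f_m(S_1)$ and $F_m^{\,*}=f_m(S_2)$, so a Loewner-type lower bound for the latter gives, via monotonicity of the modulus, an inequality in the wrong direction: it bounds something \emph{larger} than the quantity you control from above. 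To close the loop you would need that every (or a large subfamily of) path joining $F_m$ and $F_m^{\,*}$ lifts to a path of $\Gamma(S_1,S_2,D)$, which is false in general for open discrete maps and is precisely the ``path-lifting machinery'' you defer; moreover the uniform lower bound $\gamma(\epsilon_0)>0$ for the $p$-modulus of paths joining two continua of prescribed chordal diameter is itself a nontrivial assertion for $p\ne n$ that you do not prove or cite.

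The paper avoids this entirely and never argues by contradiction. It stays inside condenser language, where $f(E)=(f(A),f(C))$ is again a condenser for open maps, and uses two \emph{lower} capacity bounds that require no lifting: first Proposition~\ref{pr1A} (Maz'ya), ${\rm cap}_p\,f(E)\geqslant c\,[m(f(C))]^{(n-p)/n}$, which together with the uniform decay of ${\rm cap}_p\,f(E)$ forces $m(f(\overline{B(x_0,\varepsilon_1)}))\leqslant 1$ for some fixed $\varepsilon_1$; then the Kruglikov--Martio--Rickman--V\"ais\"al\"a inequality~(\ref{eq1G}), whose lower bound involves $\bigl(d(f(\overline{B(x_0,\varepsilon)}))\bigr)^p/\bigl(m(f(B(x_0,\varepsilon_1)))\bigr)^{1-n+p}$ and hence, once the measure is tamed, squeezes the diameter $d(f(\overline{B(x_0,\varepsilon)}))$ to zero uniformly in $f$. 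This is where the restriction $p\in(n-1,n)$ actually enters (so that $1-n+p>0$ and $n-p>0$), rather than through a Loewner property. To repair your argument you should replace the contradiction scheme by this two-step capacity squeeze, or else supply the missing lifting lemma and the $p$-Loewner estimate with proof.
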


\medskip
In the lemma~\ref{lem4} the equicontinuity of the family of mappings
$\frak{A}_{\Phi, p, \delta, M_0}(D)$ should be understood as between
the spaces $(D, d)$ and $({\Bbb R}^n, d),$ where $d$ is the
Euclidean metric.

\medskip
\begin{proof} Let us fix $x_0\in D.$
Let us use the approach used to prove Lemma~2.4 in~~\cite{GSS$_2$}.
Let $0<r_0<{\rm dist\,}(x_0,\,\partial D).$ Consider the condenser
$E=(A, C),$ where $A=B(x_0, r_0),$ $C=\overline{B(x_0,
\varepsilon)}.$ In this case, according to~\cite[Lemma~1]{SalSev}
$$
{\rm cap}_p\, f(E)\leqslant\ \frac{\omega_{n-1}}{I^{\,p-1}}\,,
$$
where
$$
I=I(x_0, \varepsilon, r_0)=\int\limits_{\varepsilon}^{r_0}\
\frac{dr}{r^{\frac{n-1}{p-1}}q_{x_0}^{\frac{1}{p-1}}(r)}
$$
and $f(E)=\left(f\left(B(x_0, r_0)\right), f\left(\overline{B(x_0,
\varepsilon)}\right)\right).$ By Lemma~\ref{lem1} and due to the
condition~(\ref{eq2}) there is a function
$\alpha=\alpha(\varepsilon)$ and a number
$0<\varepsilon^{\,\prime}_0<r_0$ such that
$\alpha(\varepsilon)\rightarrow 0$ as $\varepsilon\rightarrow 0$
and, in addition,
$${\rm cap}_p\,f(E)\leqslant \alpha(\varepsilon)$$
for any $\varepsilon\in (0, \varepsilon^{\,\prime}_0)$ and
$f\in\frak{A}_{\Phi, p, \delta, M_0}(D).$
By Proposition~\ref{pr1A} we obtain that
$$
\alpha(\varepsilon)\geqslant{\rm cap}_p\,f(E)\geqslant
n{\Omega}^{\frac{p}{n}}_n
\left(\frac{n-p}{p-1}\right)^{p-1}\left[m(f(C))\right]^{\frac{n-p}{n}}\,,
$$
where $m(C)$ denotes the Lebesgue measure of $C.$ In other words,
\begin{equation*}
m(f(C))\leqslant \alpha_1(\varepsilon)\,,
\end{equation*}
where $\alpha_1(\varepsilon)\rightarrow 0$ as
$\varepsilon\rightarrow 0.$ From the last relation it follows that
there exists a number $\varepsilon_1\in (0, 1),$ such that
\begin{equation}\label{eqroughb}
m(f(C))\leqslant 1\,,
\end{equation}
where $C=\overline{B(x_0, \varepsilon_1)}.$

\medskip
Let $E_1=(A_1, C_{\varepsilon}),$ $A_1=B(x_0, \varepsilon_1),$ and
$C_{\varepsilon}=\overline{B(x_0, \varepsilon)},$ $\varepsilon\in
(0, \varepsilon_1).$ By Lemma~\ref{lem1} there exists a function
$\alpha_2(\varepsilon)$ and a number
$0<\varepsilon^{\,\prime}_0<\varepsilon_1$ such that
$$
{\rm cap}_p\,f(E_1)\leqslant \alpha_2(\varepsilon)
$$
for any $\varepsilon\in (0, \varepsilon^{\,\prime}_0),$ where
$\alpha_2(\varepsilon)\rightarrow 0$ as $\varepsilon\rightarrow 0.$
On the other hand, by~\cite[Proposition~6]{Kr},
cf.~\cite[Lemma~5.9]{MRV}
\begin{equation}\label{eq1G} \left(c_1\frac{\left(d(f(\overline{B(x_0,
\varepsilon)}))\right)^p} {\left(m(f(B(x_0,
\varepsilon_1)))\right)^{1-n+p}}\right)^{\frac{1}{n-1}} \leqslant
{\rm cap\,}_p\,f\left(E_1\right)\leqslant \alpha_2(\varepsilon)\,.
\end{equation}
By~(\ref{eqroughb}) and~(\ref{eq1G}), we obtain that
\begin{equation}\label{eq1C}
d(f(\overline{B(x_0, \varepsilon)})) \leqslant
\alpha_3(\varepsilon)\,,
\end{equation}
where $\alpha_3(\varepsilon)\rightarrow 0$ as
$\varepsilon\rightarrow 0.$ The proof of Lemma~\ref{lem4} is
complete because the mapping $f\in \frak{A}_{\Phi, p, \delta,
M_0}(D)$ is arbitrary.~$\Box$
\end{proof}

\medskip
Given $p\geqslant 1,$ numbers $\delta>0,$ $0<M_0<\infty,$ a domain
$D\subset {\Bbb R}^n,$ $n\geqslant 2,$ a point $a\in D$ and a
strictly increasing convex function $\Phi\colon\overline{{\Bbb
R}^{+}}\rightarrow\overline{{\Bbb R}^{+}}$ denote by
$\frak{F}_{\Phi, a, p, \delta, M_0}(D)$ the family of all
homeomorphisms $f:D\rightarrow \overline{{\Bbb R}^n}$ satisfying
(\ref{eq3*!gl0})--(\ref{eq*3gl0}) in $\overline{D}$ for some $Q=Q_f$
such that $h(f(a),
\partial f(D))\geqslant\delta,$ $h(\overline{{\Bbb R}^n}\setminus
f(D))\geqslant \delta$ and, in addition, (\ref{eq5A}) holds.

\medskip
\begin{theorem}\label{th1} {\sl\, Let $p\in (n-1, n],$ let $D$ be regular,
and let $D_f^{\,\prime}=f(D)$ be bounded domains with a locally
quasiconformal boundary which are equi-uniform with respect to
$p$-modulus over all $f\in\frak{F}_{\Phi, a, p, \delta, M_0}(D).$ If
there is $\delta_0>\tau_0:=\Phi(0)$ such that~(\ref{eq2}) holds,
then any $f\in\frak{F}_{\Phi, a, p, \delta, M_0}(D)$ has a
continuous extension~$\overline{f}: \overline{D}_P\rightarrow
\overline{{\Bbb R}^n}$ and, in addition, the family $\frak{F}_{\Phi,
a, p, \delta, M_0}(\overline{D})$ of all extended mappings
$\overline{f}: \overline{D}_P\rightarrow \overline{{\Bbb R}^n}$ is
equicontinuous in $\overline{D}_P.$
  }
\end{theorem}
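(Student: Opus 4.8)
The plan is to reduce everything to a model domain and then run a modulus double-estimate. Since $D$ is regular, there is a quasiconformal $g\colon D_0\to D$ of a domain $D_0$ with locally quasiconformal boundary and compact closure, and by~\cite[Theorem~2.1, Remark~2.1]{IS} the prime ends of $D$ are in bijection with the boundary points of $D_0$; the metric $\rho$ in~(\ref{eq5}) is exactly the one making the extension of $g^{\,-1}$ an isometry of $(\overline{D}_P,\rho)$ onto $(\overline{D_0},|\cdot|)$. Thus continuity of $\overline{f}$ and equicontinuity of the extended family on $\overline{D}_P$ are what must be shown, and $g$ serves only to carry the prime-end topology; the modulus estimates will be performed in $D$ and in the images $D'_f=f(D)$, so that~(\ref{eq5A}) is used verbatim with its uniform constant $M_0$ (no transfer of $\Phi(Q)$ through $g$ is required). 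Regularity also supplies, for each prime end, a regular chain of cuts $\{\sigma_m\}$ with $d(\sigma_m)\to0$, which localizes the boundary behaviour near the impression point. Interior equicontinuity is then immediate: each $f\in\frak{F}_{\Phi,a,p,\delta,M_0}(D)$ is an open discrete mapping satisfying~(\ref{eq3*!gl0})--(\ref{eq*3gl0}) and~(\ref{eq5A}), so $\frak{F}_{\Phi,a,p,\delta,M_0}(D)\subset\frak{A}_{\Phi,p,\delta,M_0}(D)$, and Lemma~\ref{lem4} gives equicontinuity on compact subsets of $D$ (for the borderline $p=n$ one replaces Proposition~\ref{pr1A} by the classical $n$-capacity estimate); boundedness of the images makes chordal and Euclidean metrics comparable, so this is equicontinuity in the metrics of $\overline{D}_P$ and $\overline{{\Bbb R}^n}$.

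For the continuous extension at a prime end $P_0$, fix the impression point $x_0\in\partial D$ and center the ring condition~(\ref{eq3*!gl0}) there (it is assumed to hold in all of $\overline{D}$). Choosing the test function $\eta(r)=\bigl(r^{\frac{n-1}{p-1}}q_{x_0}^{\frac{1}{p-1}}(r)\bigr)^{-1}/I(x_0,\varepsilon,r_0)$ as in the proof of Lemma~\ref{lem4}, one gets $M_p\bigl(f(\Gamma(S(x_0,\varepsilon),S(x_0,r_0),D))\bigr)\leqslant \omega_{n-1}/I^{\,p-1}$, which tends to $0$ as $\varepsilon\to0$ by Lemma~\ref{lem1} and condition~(\ref{eq2}). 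If the cluster set of $f$ as $x\to P_0$ contained two distinct points, then the image of a thin shell about $x_0$ would separate two nondegenerate continua in $D'_f$; since $D'_f$ is a bounded domain with locally quasiconformal boundary, the $p$-modulus between two nondegenerate continua in $D'_f$ is bounded below, contradicting the vanishing just obtained. Hence the cluster set is a single point, $\overline{f}(P_0)$ is well defined, and continuity of $\overline{f}$ on $\overline{D}_P$ follows.

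The principal work is the equicontinuity of the extended family at prime ends, which I would prove by contradiction. If it failed at some $P_0$, there would be $\varepsilon_0>0$, mappings $f_m\in\frak{F}$ and points $x_m\to P_0$ with $h(\overline{f_m}(x_m),\overline{f_m}(P_0))\geqslant\varepsilon_0$. Using the shrinking cuts $\sigma_m$ (equivalently small spheres $S(x_0,\varepsilon_m)$) I would build an \emph{inner} continuum $E_m^{*}\subset D'_{f_m}$ whose chordal diameter is bounded below by a fixed $r_1>0$ coming from this oscillation, and an \emph{outer} continuum $E_m\subset D'_{f_m}$ of chordal diameter bounded below by a fixed $r_2>0$ coming from the normalizations $h(f(a),\partial f(D))\geqslant\delta$ and $h(\overline{{\Bbb R}^n}\setminus f(D))\geqslant\delta$, the two lying on opposite sides of the image $f_m(S(x_0,r_0))$ of a fixed separating sphere. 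Then every path joining $E_m^{*}$ to $E_m$ in $D'_{f_m}$ contains a subpath of $f_m(\Gamma(S(x_0,\varepsilon_m),S(x_0,r_0),D))$, so by minorization and the ring condition $M_p(\Gamma(E_m^{*},E_m,D'_{f_m}))\leqslant \omega_{n-1}/I^{\,p-1}\to0$ via Lemma~\ref{lem1}; on the other hand equi-uniformity~(\ref{eq17***}) of $\{D'_{f_m}\}$ forces $M_p(\Gamma(E_m^{*},E_m,D'_{f_m}))\geqslant\delta'>0$. This contradiction yields boundary equicontinuity and, with the two preceding paragraphs, completes the proof.

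The step I expect to be the main obstacle is precisely this construction of the two separated continua: one must guarantee that the outer continuum $E_m$ has chordal diameter uniformly bounded below and genuinely lies in the component of $D'_{f_m}\setminus f_m(S(x_0,r_0))$ opposite to $E_m^{*}$, so that the modulus minorization is valid. This is where the normalization conditions and the equi-uniformity hypothesis are indispensable, and where the prime-end geometry (regularity of the chain and $d(\sigma_m)\to0$) has to be reconciled with the sphere-centred form of the ring inequality.
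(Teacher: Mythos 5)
Your overall strategy coincides with the paper's: interior equicontinuity from Lemma~\ref{lem4} (with a separate reference for $p=n$), continuous extension to prime ends from the ring estimate~(\ref{eq3*!gl0}) combined with Lemma~\ref{lem1}, and boundary equicontinuity by contradiction, playing an upper modulus bound that tends to zero against the lower bound~(\ref{eq17***}) supplied by equi-uniformity. The genuine gap is exactly the step you flag at the end and then do not carry out: the construction of the ``outer'' continuum $E_m$ of chordal diameter bounded below, together with the verification that every path joining it to the ``inner'' continuum must cross the spherical ring centred at the point $x_0$ carrying the cuts, so that the minorization $\Gamma(|\gamma_m|,E_m,D)>\Gamma(S(x_0,r_m),S(x_0,\widetilde{\varepsilon_0}),D)$ is legitimate. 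This is not a routine detail; it is where most of the paper's proof is spent. The paper resolves it by fixing a second prime end $P_1\ne P_0$, choosing $\zeta_m\rightarrow P_1$ with $h(f_m(\zeta_m),f_m(P_1))\rightarrow 0$, and building a nested chain of paths $K_m$ from the normalization point $a$ out to $\zeta_m$; the lower bound $h(f_m(K_m))\geqslant\delta/2$ then follows from $h(f_m(a),\partial f_m(D))\geqslant\delta$ by the triangle inequality (see~(\ref{eq3D})), and the separation statement $d_m\cap K_m=\varnothing$ for large $m$ (relation~(\ref{eq4D})) is proved by a case analysis showing that a sequence of intersection points would have to converge simultaneously to $P_0$ and either to an interior point of $D$ or to $P_1$, both impossible. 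Without some such construction your contradiction does not close: the normalization $h(f(a),\partial f(D))\geqslant\delta$ by itself supplies a single point far from $\partial f(D)$, not a continuum of definite diameter that is disjoint from the shrinking neighborhoods $d_m$ of $P_0$.

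Two smaller points. First, for the single-map extension you assert that the $p$-modulus between any two nondegenerate continua in a bounded domain with locally quasiconformal boundary is bounded below; for $p<n$ this is not automatic and should be drawn from the equi-uniformity hypothesis, or replaced, as the paper does, by strong accessibility of $\partial f(D)$ with respect to $p$-modulus together with the extension result it cites. Second, Proposition~\ref{pr1A} degenerates at $p=n$ (the exponent $(n-p)/n$ vanishes), so the borderline case really does require the separate argument the paper invokes rather than a drop-in replacement inside the proof of Lemma~\ref{lem4}.
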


\medskip
\begin{remark}\label{rem2}
In Theorem~\ref{th1}, the equicontinuity should be understood in the
sense of mappings acting between the spaces~$(X, d)$ and
$\left(X^{\,\prime}, d^{\,\prime}\right),$ where $X=\overline{D}_P$
is the complement of the domain $D$ by its prime ends, and $d$ is
one of the possible metrics that correspond to the topological space
$\overline{D}_P$ in~(\ref{eq5}). In addition,
$X^{\,\prime}=\overline{{\Bbb R}^n}$ and $d^{\,\prime}=h$ is a
chordal (spherical) metric.

An example of a family of plane mappings $f_n(z)=z^n,$ $n=1,2,\ldots
,$ $z\in {\Bbb D},$ indicates the inaccuracy of the analogue of
Theorem~\ref{th1} for mappings with branching, in particular, this
theorem is not true under the normalization condition $f_n(0)=0,$
$n=1,2,\ldots .$
\end{remark}

\medskip
\begin{proof}
Put $f\in \frak{F}_{\Phi, A, p, \delta, M_0}(D)$ and $Q=Q_f(x).$
Given $x\in {\Bbb R}^n$ we set
$$Q^{\,\prime}(x)=\begin{cases}Q(x), &x\in D, Q(x)\geqslant 1\\ 1, & x\in D, Q(x)<1\\
1,& x\not\in D\end{cases}\,.$$
Observe that the function $Q^{\,\prime}(x)$ satisfies the
relation~(\ref{eq5A}) up to some constant. Indeed,
$$
\int\limits_D\Phi(Q^{\,\prime}(x))\frac{dm(x)}{\left(1+|x|^2\right)^n}=
\int\limits_{\{x\in D: Q(x)< 1
\}}\Phi(Q^{\,\prime}(x))\frac{dm(x)}{\left(1+|x|^2\right)^n}+$$$$+
\int\limits_{\{x\in D: Q(x)\geqslant 1\}
}\Phi(Q^{\,\prime}(x))\frac{dm(x)}{\left(1+|x|^2\right)^n}\leqslant
\delta+\Phi(1)\int\limits_{{\Bbb
R}^n}\frac{dm(x)}{\left(1+|x|^2\right)^n}=M^{\,\prime}_0<\infty\,.$$
In this case, by Lemma~\ref{lem1}
$$\int\limits_{\varepsilon}^{r_0}\frac{dt}{t^{\frac{n-1}{p-1}}q^{\,\prime\frac{1}{p-1}}_{x_0}(t)}
\rightarrow\infty $$
for any $0<r_0<1$ and $\varepsilon\rightarrow 0,$ where
$q^{\,\prime}_{x_0}(t)=\frac{1}{\omega_{n-1}r^{n-1}}
\int\limits_{S(x_0, t)}Q^{\,\prime}(x)\,d\mathcal{H}^{n-1}\,.$
Besides that,
$\int\limits_{\varepsilon}^{r_0}\frac{dt}{t^{\frac{n-1}{p-1}}q^{\,\prime\frac{1}{p-1}}_{x_0}(t)}<\infty$
for any $\varepsilon\in (0, r_0),$ because
$q^{\,\prime}_{x_0}(t)\geqslant 1$ for almost any $t\in (0, r_0).$
Set
$\psi(t)=\frac{1}{t^{\frac{n-1}{p-1}}q_{x_0}^{\frac{1}{p-1}}(t)}$
for $t\in (0, r_0).$ Now, by the Fubini theorem, we obtain that
\begin{equation}\label{eq10B}
\frac{\omega_{n-1}}{I^{p-1}}=\frac{1}{I^p}\int\limits_{\varepsilon<|x-x_0|<r_0}
Q(x)\cdot \psi^p(|x-x_0|)\ dm(x)\rightarrow 0, \quad
\varepsilon\rightarrow 0\,,
\end{equation}
where $I:=I(\varepsilon, r_0)=\int\limits_{\varepsilon}^{r_0}\
\frac{dt}{t^{\frac{n-1}{p-1}}q_{x_0}^{\,\prime\frac{1}{p-1}}(r)}.$
In this case, by~\cite[Lemma~3]{Sev$_5$} a mapping
$f\in\frak{F}_{\Phi, A, p, \delta, M_0}(D)$ has a continuous
extension to $\overline{D}_P.$ In particular, the strong
accessibility of the boundary of $D_f^{\,\prime}=f(D)$ with respect
to $p$-modulus follows from~\cite[Remark]{SevSkv$_1$}. Observe that
$f\in \frak{F}_{\Phi, a, p, \delta, M_0}(\overline{D})$ does not
take the point infinity for $p\ne n$ (see, e.g., \cite[Lemmas 2.6
and 3.1]{GSS$_1$}). Now, the equicontinuity of $\frak{F}_{\Phi, a,
p, \delta, M_0}(\overline{D})$ inside $D$ follows by Theorem~4.1
in~\cite{RS} fpr $p=n$ and Lemma~\ref{lem4} for $p<n.$

\medskip
We prove the equicontinuity of the family $\frak{F}_{\Phi, a, p,
\delta, M_0}(\overline{D})$ in $E_D:=\overline{D}_P\setminus D.$ Let
us assume the opposite, namely that there are $\varepsilon_*>0,$
$P_0\in E_D,$ a sequence $x_m\in \overline{D}_P,$ $x_m\rightarrow
P_0$ as $m\rightarrow \infty,$ and a mapping  $f_m\in
\frak{F}_{\Phi, a, p, \delta, M_0}(\overline{D})$ such that

\begin{equation}\label{eq1F}
h(f_m(x_m), f_m(P_0))\geqslant\varepsilon_*\,,\quad m=1,2,\ldots\,.
\end{equation}
Since $f_m$ as a continuous extension at $P_0,$ we may assume that
$x_m\in D$ and, in addition, there is a sequence $x^{\,\prime}_m\in
\overline{D}_P,$ $x^{\,\prime}_m\rightarrow P_0$ as
$m\rightarrow\infty,$ such that
\begin{equation}\label{eq1*}
h(f_m(x_m), f_m(x^{\,\prime}_m))\geqslant\varepsilon_*/2\,,\quad
m=1,2,\ldots\,.
\end{equation}
Let $d_m$ be a sequence of cuts~$\sigma_m$ corresponding to $P_0$
such that $\sigma_m\subset S(x_0, r_m),$ $x_0\in\partial D$ and
$r_m\rightarrow 0$ as $m\rightarrow\infty$
(see~\cite[Lemma~2]{KR$_1$}). Without loss of a generality, we may
assume that $x_m, x^{\,\prime}_m\in d_m.$ Let $\gamma_m$ be a path
joining $x_m$ and $x^{\,\prime}_m$ in $d_m.$

\medskip
Since the domain $D$ is regular, the space $\overline{D}_P$ contains
at least two prime ends $P_1$ and $P_2 \in E_D.$ Let $P_1\subset
E_D$ be a prime end that does not coincide with $P_0.$ Suppose that
$G_m,$ $m=1,2,\ldots,$ is a sequence of domains that corresponds to
a prime end $P_1.$ Since the mapping $f_m$ has a continuous
extension on $\overline{D}_P$ for any $m=1,2,\ldots ,$ we may choose
a sequence $\zeta_m\in G_m,$ $\zeta_m\rightarrow P_1$ as
$m\rightarrow\infty,$ such that $h(f_m(\zeta_m),
f_m(P_1))\rightarrow 0$ as $m\rightarrow\infty.$ Note that
\begin{equation}\label{eq3D}
h(f_m(a), f_m(\zeta_m))\geqslant h(f_m(a), f_m(P_1))-h(f_m(\zeta_m),
f_m(P_1))\geqslant \delta/2\,,
\end{equation}
for any $m\geqslant m_0$ and some $m_0\in {\Bbb N}.$ We construct a
sequence of continua $K_m,$ $m=1,2, \ldots$ as follows. We join the
points $\zeta_1$ and $a$ by an arbitrary path in $D,$ which we
denote by $K_1.$ Next, we join the points $\zeta_2$ and $\zeta_1$ by
a path $K_1^{\prime},$ in $G_1.$ Combining the paths $K_1$ and
$K_1^{\prime},$ we obtain a path $K_2,$ joining the points $a$ and
$\zeta_2.$ And so on. Suppose that at some step we have a path
$K_m,$ that join the points $\zeta_m$ and $a.$ Join the points
$\zeta_{m+1}$ and $\zeta_m$ with a path $K_m^{\,\prime},$ which lies
in $G_m.$ Combining the paths $K_m$ and $K_m^{\,\prime},$ we obtain
a path $K_{m+1}.$ And so on (see Figure~\ref{fig2}).
\begin{figure}[h]
\centerline{\includegraphics[scale=0.7]{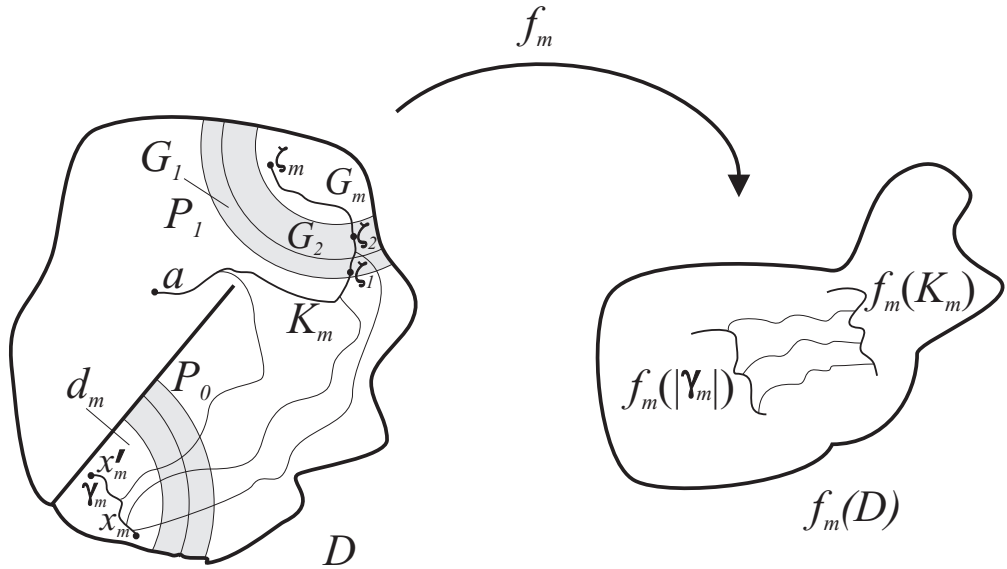}} \caption{To
proof of Theorem~\ref{th1}}\label{fig2}
\end{figure}
We show that there is a number $m_1\in {\Bbb N}$ such that
\begin{equation}\label{eq4D}
d_m\cap K_m=\varnothing\quad\forall\quad m\geqslant m_1\,.
\end{equation}
We prove this from the opposite, namely, suppose that~(\ref{eq4D})
does not hold. Then there is an increasing sequence of
numbers~$m_k\rightarrow\infty,$ $k\rightarrow\infty,$ and
points~$\xi_k\in K_{m_k}\cap d_{m_k},$ $m=1,2,\ldots,\,.$ Then
$\xi_k \rightarrow P_0$ as $k\rightarrow\infty.$

\medskip
Note that two cases are possible: either all elements $\xi_k$ belong
to $D\setminus G_1$ for $k = 1,2, \ldots ,$  or there is a number
$k_1$ such that $\xi_{k_1}\in G_1.$ In the second case, consider the
sequence~$\xi_k,$ $k> k_1.$ Note that two cases are possible: or
$\xi_k$ for $k>k_1$ belong to $D \setminus G_2,$ or there is
$k_2>k_1$ such that $\xi_{k_2}\in G_2.$ In the second case, consider
the sequence~$\xi_k,$ $k>k_2,$ and so on. Assume that the element
$\xi_{k_{l-1}}\in G_{l-1}$ is already constructed. Note that two
cases are possible: either $\xi_k$ belong to $D\setminus G_l $ for
$k>k_{l-1},$ or there is a number $k_l>k_{l-1}$ such that
$\xi_{k_l}\in G_l, $ and etc. This procedure can be both finite or
infinite, depending on which we have two possible situations:

\medskip
1) or there are numbers $n_0\in {\Bbb N}$ and $l_0\in {\Bbb N}$ such
that that $\xi_k\in D\setminus G_{n_0}$ for all $k>l_0;$

2) or for each  there is an element $\xi_{k_l}$ such that
$\xi_{k_l}\in G_l,$ and the sequence $k_l$ is increasing by $l\in
{\Bbb N}.$

\medskip
Consider each of these cases separately and show that in both of
them we come to a contradiction. Let situation~1) holds. Observe
that all elements of the sequence~$\xi_k$ belong to~$K_ {n_0},$
hence the existence of the subsequence~$\xi_{k_r},$ $r=1,2,\ldots,$
convergent as $r\rightarrow \infty $ to some point $\xi_0\in D.$
However, $\xi_k\in d_{m_k}$ and therefore $\xi_0\in
\bigcap\limits_{m=1}^{\infty} \overline{d_m}\subset
\partial D$ (see~\cite[Proposition~1]{KR$_2$}). The obtained
contradiction indicates the impossibility of the case~1). Suppose
that case~2) holds, then simultaneously $\xi_k\rightarrow P_0$ and
$\xi_k\rightarrow P_1$ as $k\rightarrow\infty.$ Since
space~$\overline{D}_P$ is metric with a metric $\rho$
in~(\ref{eq5}), by the triangle inequality it follows that
$P_1=P_0,$ which contradicts the choice of~$P_1.$ The obtained
contradiction indicates the validity of the relation~(\ref{eq4D}).

\medskip
By the relation~(\ref{eq4D}) and by the definition of
cuts~$\sigma_m\subset S(x_0, r_m),$ we obtain that
$$\Gamma\left(|\gamma_m|, K_m, D\right)>\Gamma(S(x_0, r_m), S(x_0,
\widetilde{\varepsilon_0}), D)\,.$$
Thus
$$f_m(\Gamma\left(|\gamma_m|, K_m, D\right))>f_m(\Gamma(S(x_0, r_m), S(x_0,
\widetilde{\varepsilon_0}), D))\,,$$ whence, by the definition of
the class~$\frak{F}_{\Phi, a, p, \delta, M_0}(\overline{D})$
$$M_p(f_m(\Gamma(|\gamma_m|, K_m, D)))\leqslant$$
\begin{equation}\label{eq39}
\leqslant M_p(f_m(\Gamma(S(x_0, r_m), S(x_0,
\widetilde{\varepsilon_0}), D))\leqslant \int\limits_{A\cap D}
Q_m(x)\cdot \eta^p(|x-x_0|)\, dm(x)\,,
\end{equation}
where $\eta$ is any Lebesgue measurable function
satisfying~(\ref{eq*3gl0}) for $r_1\mapsto r_m$ and $r_2\mapsto
\widetilde{\varepsilon_0},$ in addition, $Q_m:=Q_{f_m}$ corresponds
to the function $Q$ in~(\ref{eq3*!gl0}). Let us to prove the
inequality
\begin{equation}\label{eq3BB}
M_p(f_m(\Gamma(S(x_0, r_m), S(x_0, \widetilde{\varepsilon_0}),
D))\leqslant \frac{\omega_{n-1}}{I_m^{p-1}}\,,
\end{equation}
where $I_m=\int\limits_{r_m}^{\widetilde{\varepsilon_0}}\
\frac{dr}{r^{\frac{n-1}{p-1}}q_{mx_0}^{\frac{1}{p-1}}(r)},$
$q_{mx_0}(t)=\frac{1}{\omega_{n-1}r^{n-1}} \int\limits_{S(x_0,
t)}Q_m(x)\,d\mathcal{H}^{n-1}$ and $Q_m:=Q_{f_m}$ (we set
$Q_m(x)\equiv 0$ for $x\not\in D$). To do this, we will reason
similarly to the proof of Lemma~\ref{lem1} in~\cite{SalSev}. We may
assume that $I\ne 0,$ since~(\ref{eq3BB}) is obviously in this case.
We may also assume that $I\ne \infty,$ because otherwise we may
consider $Q(x)+\delta$ instead of $Q(x)$ in (\ref{eq3BB}), and then
go to the limit as $\delta\rightarrow 0. $ Let $0\ne I\ne \infty.$
Then $q_{x_0}(r)\ne 0 $ for $r\in(r_m, \widetilde{\varepsilon_0}).$
Put
$$ \psi(t)= \left \{\begin{array}{rr}
1/[t^{\frac{n-1}{p-1}}q_{x_0}^{\frac{1}{p-1}}(t)], & t\in (r_m,
\widetilde{\varepsilon_0})\ ,
\\ 0,  &  t\notin (r_m,
\widetilde{\varepsilon_0})\ .
\end{array} \right. $$
By the Fubini theorem
\begin{equation}\label{eq3A}
\int\limits_{A} Q_m(x)\cdot\psi^p(|x-x_0|)\,dm(x)=\omega_{n-1}
I_m\,,
\end{equation}
where $A=A(r_m, \widetilde{\varepsilon_0}, x_0)$ is defined
in~(\ref{eq6}). Observe that  the function $\eta_1(t)=\psi(t)/I,$
$t\in (r_m, \widetilde{\varepsilon_0}),$ satisfies~(\ref{eq*3gl0}).
Now, by~(\ref{eq3*!gl0}) and~(\ref{eq3A}) we obtain the
relation~(\ref{eq3BB}), as required.

Finally, from~(\ref{eq39}), (\ref{eq3BB}) and from Lemma~\ref{lem1}
it follows that
\begin{equation}\label{eq39A}
M_p(f_m(\Gamma(|\gamma_m|, K_m, D)))\leqslant
\frac{\omega_{n-1}}{\left(\int\limits_{r_m}^{\widetilde{\varepsilon_0}}
\frac{dr}{r^{\frac{n-1}{p-1}}q^{\frac{1}{p-1}}_{mx_0}(r)}\right)^{p-1}}\rightarrow
0\,,\quad m\rightarrow\infty\,, \end{equation}
where $q_{mx_0}(t)=\frac{1}{\omega_{n-1}r^{n-1}} \int\limits_{S(x_0,
t)}Q_m(x)\,d\mathcal{H}^{n-1}$ and $Q_m:=Q_{f_m}$ corresponds to the
function $Q$ in~(\ref{eq10}). The relation~(\ref{eq39}) contradicts
the equi-uniformity of the sequence of
domains~$D^{\,\prime}_m:=f_m(D).$ Indeed, $h(f_m(K_m))\geqslant
\delta/2$ according to~(\ref{eq3D}), and
$h(f_m(|\gamma_m|))>\varepsilon_*/2$ by the relation~(\ref{eq1*}).
Hence, since the sequence of domains $D^{\,\prime}_m:=f_m(D)$ is
equi-uniform, we obtain that
$$M_p(f_m(\Gamma(|\gamma_m|, K_m, D)))=M_p(\Gamma(f_m(|\gamma_m|), f_m(K_m), f_m(D)))\geqslant
\delta_*>0$$
for some $\delta_*>0$ and any $m=1,2,\ldots, $ which contradicts the
relation~(\ref{eq39A}). The obtained contradiction indicates the
incorrectness of the assumption in~(\ref{eq1F}). Theorem is
proved.~$\Box$
\end{proof}

\section{Equicontinuity of families of mappings with inverse Poletsky inequality}

In this section we deal with mappings $f:D\rightarrow{\Bbb R}^n$
області $D\subset{\Bbb R}^n,$ $n\geqslant 2.$ The main purpose is to
summarize the results of our previous article~\cite{SSD}. In
particular, it is necessary for proving the key theorems of
compactness of mapping classes from the next section.

\medskip
Let $y_0\in {\Bbb R}^n,$ $0<r_1<r_2<\infty$ and
$$
A=A(y_0, r_1,r_2)=\left\{ y\,\in\,{\Bbb R}^n:
r_1<|y-y_0|<r_2\right\}\,.$$
Let, as above, $M(\Gamma)$ be a conformal modulus of families of
paths $\Gamma$ in ${\Bbb R}^n$ (see e.g.~\cite[гл.~6]{Va}). Let
$f:D\rightarrow{\Bbb R}^n,$ $n\geqslant 2,$ and let $Q:{\Bbb
R}^n\rightarrow [0, \infty]$ be a Lebesgue measurable function such
that $Q(x)\equiv 0$ for $x\in{\Bbb R}^n\setminus f(D).$ Let
$A=A(y_0, r_1, r_2).$ Let $\Gamma_f(y_0, r_1, r_2)$ denotes the
family of all paths $\gamma:[a, b]\rightarrow D$ such that
$f(\gamma)\in \Gamma(S(y_0, r_1), S(y_0, r_2), A(y_0, r_1, r_2)),$
i.e., $f(\gamma(a))\in S(y_0, r_1),$ $f(\gamma(b))\in S(y_0, r_2),$
and $\gamma(t)\in A(y_0, r_1, r_2)$ for any $a<t<b.$ We say that
{\it $f$ satisfies the inverse Poletsky inequality} at $y_0\in f
(D)$ if the relation
\begin{equation}\label{eq2*A}
M(\Gamma_f(y_0, r_1, r_2))\leqslant \int\limits_{A(y_0,r_1,r_2)\cap
f(D)} Q(y)\cdot \eta^n (|y-y_0|)\, dm(y)
\end{equation}
holds for any Lebesgue measurable function $\eta:(r_1,
r_2)\rightarrow [0, \infty]$ such that
\begin{equation}\label{eqA2}
\int\limits_{r_1}^{r_2}\eta(r)\, dr\geqslant 1\,.
\end{equation}

\medskip
The following statement holds.

\medskip
\begin{theorem}\label{th3}
{\sl Let $D\subset {\Bbb R}^n,$ $n\geqslant 2,$ be a domain that has
a weakly flat boundary, and let $D^{\,\prime}\subset {\Bbb R}^n$ be
a regular domain. Suppose that $f$ is an open discrete and closed
mapping of $D$ onto $D^{\,\prime}$ that satisfies the
relation~(\ref{eq2*A}) at each point $y_0\in D^{\,\prime}.$ Suppose
that for each point $y_0\in D^{\,\prime}$ and
$0<r_1<r_2<r_0:=\sup\limits_{y\in D^{\,\prime}}|y-y_0|$ there is a
set $E\subset[r_1, r_2] $ of a positive linear Lebesgue measure such
that the function $Q$ is integrable on $S(y_0, r)$ for almost all
$r\in E.$ Then the mapping $f$ has a continuous extension
$\overline{f}:\overline{D}\rightarrow\overline{D^{\,\prime}}_P,$
moreover $\overline{f}(\overline{D})=\overline{D^{\,\prime}}_P.$ }
\end{theorem}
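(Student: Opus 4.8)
The plan is to reduce the assertion to an ordinary Euclidean boundary‑correspondence statement by exploiting the regularity of the target, and then to obtain the extension at each point of $\partial D$ by a modulus argument that plays the weak flatness of $\partial D$ against the inverse Poletsky inequality~(\ref{eq2*A}). First I would use that $D^{\,\prime}$ is regular: fix a quasiconformal map $g\colon D_0^{\,\prime}\to D^{\,\prime}$, where $D_0^{\,\prime}$ has a locally quasiconformal boundary and $\overline{D_0^{\,\prime}}$ is compact. By the very definition of the metric in~(\ref{eq5}) and the correspondence between prime ends of a regular domain and boundary points of $D_0^{\,\prime}$ (see~\cite{IS}, \cite{Na$_2$}), a continuous extension $\overline{f}\colon\overline{D}\to\overline{D^{\,\prime}}_P$ exists if and only if the composite $h:=g^{\,-1}\circ f\colon D\to D_0^{\,\prime}$ admits a continuous extension $\overline{h}\colon\overline{D}\to\overline{D_0^{\,\prime}}$ in the usual Euclidean sense. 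Here $h$ is again open, discrete, closed and onto $D_0^{\,\prime}$, and --- since quasiconformal maps distort the $n$‑modulus by a bounded factor (see~\cite{MRSY}) --- $h$ satisfies~(\ref{eq2*A}) with a weight $\widetilde Q$ for which the sphere‑integrability hypothesis is inherited. The source domain $D$ and its weakly flat boundary are untouched by this reduction.

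Next, fix $x_0\in\partial D$. By the standard behaviour of open discrete closed mappings the cluster set $C(h,x_0)$ is contained in $\partial D_0^{\,\prime}$ (see~\cite{SSD}), so it remains to prove that it is a single point. Assume the contrary: there are sequences $x_m,x_m^{\,\prime}\to x_0$ in $D$ with $h(x_m)\to b_1$ and $h(x_m^{\,\prime})\to b_2$, where $b_1\ne b_2\in\partial D_0^{\,\prime}$. Choose $0<r_1<r_2<|b_1-b_2|$ so that $\widetilde Q$ is integrable over $S(b_1,r)$ for $r$ in a set of positive linear measure in $[r_1,r_2]$. Using the local connectivity of $\partial D_0^{\,\prime}$ (so that $B(b_1,r_1)\cap D_0^{\,\prime}$ is connected for small $r_1$) together with the lifting properties of open discrete closed mappings, I would build two continua $E,F\subset D$, both accumulating at $x_0$, with $h(E)\subset\overline{B(b_1,r_1)}$ and $h(F)\subset D_0^{\,\prime}\setminus B(b_1,r_2)$.

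Then every path joining $E$ and $F$ in $D$ has $h$‑image crossing the annulus $A(b_1,r_1,r_2)$, so $\Gamma(E,F,D)>\Gamma_h(b_1,r_1,r_2)$. Consequently~(\ref{eq2*A}), applied with the admissible choice $\eta(t)=\psi(t)/I$ and the Fubini computation of~(\ref{eq3A})--(\ref{eq3BB}), gives $M(\Gamma(E,F,D))\le M(\Gamma_h(b_1,r_1,r_2))\le\omega_{n-1}/I^{\,n-1}<\infty$, a bound independent of $m$ whose finiteness is guaranteed precisely by the sphere‑integrability of $\widetilde Q$ (which forces $I\in(0,\infty)$). On the other hand, since $E$ and $F$ are continua accumulating at the same point $x_0$ of the weakly flat boundary, the weak flatness of $\partial D$ forces $M(\Gamma(E,F,D))=\infty$. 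This contradiction shows that $C(h,x_0)$ is a single point, which I take as the value $\overline{h}(x_0)$; repeating the estimate for arbitrary sequences converging to $x_0$ yields the continuity of $\overline{h}$, and hence of $\overline{f}$.

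For the surjectivity statement, $\overline{h}\colon\overline{D}\to\overline{D_0^{\,\prime}}$ is continuous and $\overline{D}$ is compact in $\overline{{\Bbb R}^n}$, so $\overline{h}(\overline{D})$ is compact, hence closed, and it contains $h(D)=D_0^{\,\prime}$, which is dense in $\overline{D_0^{\,\prime}}$; therefore $\overline{h}(\overline{D})=\overline{D_0^{\,\prime}}$, and transporting back through $g$ gives $\overline{f}(\overline{D})=\overline{D^{\,\prime}}_P$. The main obstacle is the construction of the continua $E$ and $F$ in the second step: one must arrange simultaneously that their $h$‑images lie in the prescribed ball and in its complement while both continua reach $x_0$, so that the minorization $\Gamma(E,F,D)>\Gamma_h(b_1,r_1,r_2)$ is legitimate. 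This is exactly where the local connectivity of $\partial D_0^{\,\prime}$ and the path‑lifting of open discrete closed mappings are essential, and where the reduction via $g$ pays off, since distinct prime ends of $D^{\,\prime}$ may share a Euclidean impression and thus can be separated by Euclidean annuli only after passing to $D_0^{\,\prime}$.
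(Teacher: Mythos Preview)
Your overall architecture---play an upper modulus bound coming from~(\ref{eq2*A}) against the lower bound forced by weak flatness of $\partial D$---is exactly the paper's, and your surjectivity argument is fine. The gap is the reduction step. You assert that $h=g^{-1}\circ f$ again satisfies~(\ref{eq2*A}) with some weight $\widetilde Q$ inheriting the sphere--integrability, but this is not justified: relation~(\ref{eq2*A}) is tied to \emph{Euclidean} spheres and annuli centred at points of the target, and the quasiconformal map $g^{-1}$ does not carry spheres in $D^{\,\prime}$ to spheres in $D_0^{\,\prime}$. Bounded modulus distortion of $g^{-1}$ controls the value of $M(\cdot)$, but the family $\Gamma_h(w_0,s_1,s_2)$ consists of paths whose $h$--images cross a Euclidean annulus in $D_0^{\,\prime}$; their $f$--images then cross $g$ of that annulus, which is no annulus at all, so the hypothesis on $f$ cannot be invoked. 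For the same reason, integrability of $Q$ over spheres $S(y_0,r)$ in $D^{\,\prime}$ says nothing about integrability of any $\widetilde Q$ over spheres in $D_0^{\,\prime}$.

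The paper sidesteps this by \emph{not} passing to $D_0^{\,\prime}$. Since $D^{\,\prime}$ is regular, each prime end has a chain of cuts $\sigma_m$ lying on concentric Euclidean spheres $S(z_0,r_m)$, $z_0\in\partial D^{\,\prime}$, $r_m\to 0$ (\cite[Lemma~3.1]{IS}, cf.~\cite[Lemma~1]{KR$_2$}). Assuming two distinct prime--end cluster values $P_1\ne P_2$ with disjoint nested domains $d_m$ and $g_m$, one joins a \emph{fixed} image $f(x_1)\in d_1$ to $f(x_k)\in d_k$ by a path $\gamma_k\subset d_1$, and similarly $f(y_1)$ to $f(y_k)$ by $\gamma_k^{\,\prime}\subset g_1$, then takes total $f$--liftings $\alpha_k,\beta_k$ in $D$ starting at $x_k,y_k$. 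Because $f$ is closed, the fibres over $f(x_1),f(y_1)$ are finite, so the far ends $\alpha_k(1),\beta_k(1)$ stay in $D\setminus B(x_0,R_0)$ for some fixed $R_0>0$; this is what makes weak flatness give $M(\Gamma(|\alpha_k|,|\beta_k|,D))\to\infty$. On the other side, since $d_1$ and $g_1$ are separated by a genuine Euclidean annulus $A(z_0,r_1,r_0)$ in $D^{\,\prime}$, one has $\Gamma(|\alpha_k|,|\beta_k|,D)>\Gamma_f(z_0,r_1,r_0)$, and the \emph{original} inequality~(\ref{eq2*A}) for $f$ with $\eta_0(t)=\bigl(I\,t\,q_{z_0}^{1/(n-1)}(t)\bigr)^{-1}$ and Fubini gives the finite bound $\omega_{n-1}/I^{\,n-1}$. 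Your construction of $E,F$ is close in spirit, but note that you also need their ``far ends'' to stay away from $\partial D$; in the paper this is secured precisely by the finite--fibre property of closed mappings, a point your sketch leaves implicit.
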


\medskip
\begin{proof} Fix $x_0\in\partial D.$ It is necessary to show
the possibility of continuous extension of the mapping of $f$ to the
point $x_0.$ Using, if necessary, the M\"{o}bius transformation
$\varphi:\infty\mapsto 0$ and taking into account the invariance of
the modulus $M$ in left part of the relation~(\ref{eq2*A})
(see~\cite[Theorem~8.1]{Va}), we may assume that $x_0\ne \infty.$

\medskip
Assume that the conclusion about the continuous extension of the
mapping of $f$ to the point $x_0$ is not correct. Then any prime end
$P_0\in E_{D^{\,\prime}}$ is not a limit of $f$ at the point $x_0,$
that is, there is a sequence $x_k\rightarrow x_0$ as $k\rightarrow
\infty$ and a number $\varepsilon_0> 0$ such that $\rho(f(x_k),
P_0)\geqslant \varepsilon_0$ for any $k\in{\Bbb N}, $ where $\rho$
is one of metric in~(\ref{eq5}). Since by condition the domain
$D^{\,\prime}$ is regular, it can be mapped to a bounded domain
$D_*$ with a locally quasi-conformal boundary using some
quasi-conformal mapping $g:D^{\,\prime}\rightarrow D_*.$ Note that
the points of the boundary of the domain $D^{\,\prime}$ and the
prime ends of the domain $D_* $ are in one-to-one correspondence,
see, for example, (see~\cite[Theorem~2.1]{IS},
cf.~\cite[Theorem~4.1]{Na$_2$}). In this case, since $\overline
{D}_*$ is a compactum  in ${\Bbb R}^n,$ the metric space
$(\overline{D^{\,\prime}}_P, \rho)$ is compact, as well. Now, we may
assume that $f(x_k)$ converges to some element $P_1\ne P_0$ as
$k\rightarrow\infty.$ Since $f$ has no a limit at $x_0,$ there is at
least one more sequence $y_k,$ $k=1,2\ldots ,$ $y_k\rightarrow x_0$
as $k\rightarrow \infty,$ such that $\rho(f(y_k), P_1)\geqslant
\varepsilon_1$ for any $k\in {\Bbb N}$ and some $\varepsilon_1>0.$
Again, since the metric space $(\overline{D^{\,\prime}}_P, \rho)$ is
compact, we may assume that $f(y_k)\rightarrow P_2 $ as
$k\rightarrow\infty,$ $P_1 \ne P_2,$ $P_2 \in
\overline{D^{\,\prime}}_P.$ Since the mapping $f$ is closed, it
preserves the boundary of the domain, see~\cite[Theorem~3.3]{Vu}.
So, $P_1, P_2\in E_{D^{\,\prime}}.$

\medskip
Let $\sigma_m$ and $\sigma^{\,\prime}_m,$ $m=0,1,2,\ldots, $ are
sequences of cuts which correspond to prime ends $P_1$ and $P_2,$
respectively. Assume that cuts $\sigma_m,$ $m=0,1,2,\ldots, $ belong
to spheres $S(z_0, r_m)$ centered at some point $z_0\in
\partial D^{\,\prime},$ where $r_m\rightarrow 0$ as $m\rightarrow\infty$
(such a sequence $\sigma_m$ exists by~\cite[Lemma~3.1]{IS},
cf.~\cite[Lemma~1]{KR$_2$}). Let $d_m$ and $g_m,$ $m=0,1,2,\ldots, $
be sequences of domains in $D^{\,\prime},$ corresponding to cuts
$\sigma_m$ and $\sigma^{\,\prime}_m,$ respectively. Since the space
$(\overline{D^{\,\prime}}_P, \rho)$ is metric, we may consider that
$d_m$ and $g_m$ are disjoint for any $m=0,1,2,\ldots ,$ in
particular,
\begin{equation}\label{eq4F}
d_0\cap g_0=\varnothing\,.
\end{equation}
Since $f(x_k)$ converges to $P_1$ as $k\rightarrow\infty,$ for any
$m\in {\Bbb N}$ there is $k=k(m)$ such that $f(x_k)\in d_m$ for
$k\geqslant k=k(m).$ By renumbering the sequence $x_k,$ we may
ensure that $f(x_k)\in d_k$ for any natural $k.$ Similarly, we may
assume that $f(y_k)\in g_k $ for any $k\in{\Bbb N}.$ Fix points
$f(x_1)$ and $f(y_1).$ Since by the definition of the prime end
$\bigcap\limits_{k=1}^{\infty}d_k=\bigcap\limits_{l=1}^{\infty}g_l=\varnothing,$
there are numbers $k_1$ and $k_2\in{\Bbb N}$ such that
$f(x_1)\not\in d_{k_1}$ and $f(y_1)\not \in g_{k_2}.$ By the
definition of $d_k$ and $g_k,$ we have that $d_k\subset d_{k_0}$ for
any $k\geqslant k_1$ and $g_k\subset g_{k_2}$ for any $k\geqslant
k_2.$ Now, we obtain that
$$
f(x_1)\not\in d_k\,,\quad f(y_1)\not\in g_k\,, \quad
k\geqslant\max\{k_1, k_2\}\,.
$$
Let $\gamma_k$ be a path joining $f(x_1)$ and $f(x_k)$ in $d_1,$ and
let $\gamma^{\,\prime}_k$ be a path joining $f(y_1)$ and $f(y_k)$ in
$g_1.$ In addition, let $\alpha_k$ and $\beta_k$ are total
$f$-liftings of $\gamma_k$ and $\gamma^{\,\prime}_k$ in $D$ starting
at $x_k$ and $y_k,$ respectively (they exists
by~\cite[Lemma~3.7]{Vu}), see Figure~\ref{fig1A}).
\begin{figure}[h]
\centerline{\includegraphics[scale=0.45]{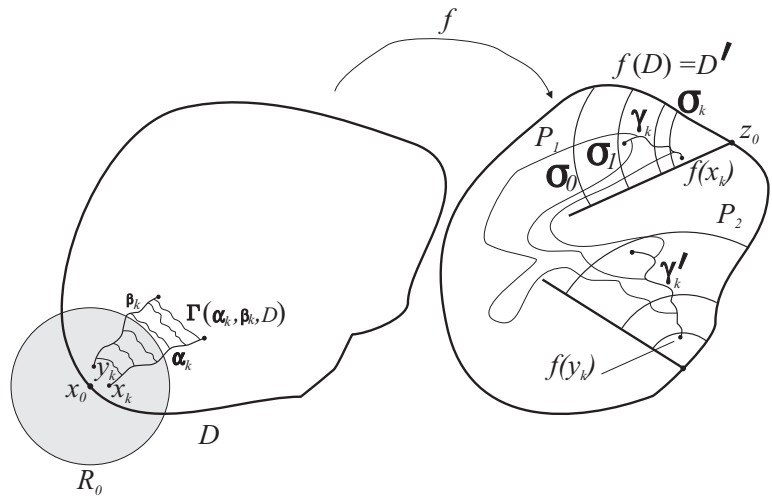}} \caption{To
proof of Theorem~\ref{th3}}\label{fig1A}
\end{figure}
Observe that the points $f(x_1)$ and $ f(y_1)$ cannot have more than
a finite number of pre-images in $D$ under the mapping $f,$
see~\cite[Lemma~3.2]{Vu}. Then there is $R_0>0$ such that
$\alpha_k(1), \beta_k(1)\in D\setminus B(x_0, R_0)$ for any
$k=1,2,\ldots .$ Since the boundary of $D$ is weakly flat, for every
$P>0$ there is $k=k_P\geqslant 1$ such that
\begin{equation}\label{eq7}
M(\Gamma(|\alpha_k|, |\beta_k|, D))>P\qquad\forall\,\,k\geqslant
k_P\,.
\end{equation}
Let us to show that the condition~(\ref{eq7}) contradicts
to~(\ref{eq2*A}). Indeed, let $\gamma\in \Gamma(|\alpha_k|,
|\beta_k|, D).$ Then $\gamma:[0, 1]\rightarrow D,$ $\gamma(0)\in
|\alpha_k|$ and $\gamma(1)\in |\beta_k|.$ In particular,
$f(\gamma(0))\in |\gamma_k|$ and $f(\gamma(1))\in
|\gamma^{\,\prime}_k|.$ In this case, by~(\ref{eq4F})
and~(\ref{eq7}) we obtain that $|f(\gamma)|\cap d_1\ne\varnothing
\ne |f(\gamma)|\cap(D^{\,\prime}\setminus d_1)$ for
$k\geqslant\max\{k_1, k_2\}.$ By~\cite[Theorem~1.I.5.46]{Ku}
$|f(\gamma)|\cap
\partial d_1\ne\varnothing,$ i.e., $|f(\gamma)|\cap S(z_0,
r_1)\ne\varnothing,$ because $\partial d_1\cap D^{\,\prime}\subset
\sigma_1\subset S(z_0, r_1)$ by the definition of the cut
$\sigma_1.$ Let $t_1\in (0,1)$ be such that $f(\gamma(t_1))\in
S(z_0, r_1)$ and $f(\gamma)|_1:=f(\gamma)|_{[t_1, 1]}.$ Without loss
of generality, we may assume that $f(\gamma)|_1\subset {\Bbb
R}^n\setminus B(z_0, r_1).$ Arguing similarly for $f(\gamma)|_1,$ we
may find a point $t_2\in (t_1,1)$ such that $f(\gamma(t_2))\in
S(z_0, r_0).$ Set $f(\gamma)|_2:=f(\gamma)|_{[t_1, t_2]}.$ Now, the
path $f(\gamma)|_2$ is a subpath of $f(\gamma)$ and, moreover,
$f(\gamma)|_2\in \Gamma(S(z_0, r_1), S(z_0, r_0), D^{\,\prime}).$
Without loss of generality, we may assume that $f(\gamma)|_2\subset
B(z_0, r_0).$ Therefore, $\Gamma(|\alpha_k|, |\beta_k|,
D)>\Gamma_f(z_0, r_1, r_0).$
From the last relation, by the minorization of the modulus of
families of paths (see, e.g., \cite[Theorem~1(c)]{Fu}) we obtain
that
$$M(\Gamma(|\alpha_k|, |\beta_k|, D))\leqslant$$
\begin{equation}\label{eq11}
\leqslant M(\Gamma_f(z_0, r_1, r_0))\leqslant M(\Gamma_f(z_0, r_1,
r_0))\leqslant \int\limits_A Q(y)\cdot \eta^n (|y-z_0|)\, dm(y)\,,
\end{equation}
where $A=A(z_0, r_1, r_0)$ and $\eta$ be an arbitrary Lebesgue
measurable function with~(\ref{eqA2}). Bellow we set, as usual,
$a/\infty=0$ for $a\ne\infty,$ $a/0=\infty$ for $a>0$ and
$0\cdot\infty=0$ (see, e.g., \cite[3.I]{Sa}). Set
\begin{equation}\label{eq13}
I=\int\limits_{r_1}^{r_0}\frac{dt}{tq_{z_0}^{1/(n-1)}(t)}\,,
\end{equation}
where
$$
q_{z_0}(r)=\frac{1}{\omega_{n-1}r^{n-1}}\int\limits_{S(z_0,
r)}Q(y)\,d\mathcal{H}^{n-1}(y)$$
and $\omega_{n-1}$ demotes the area of the unit sphere ${\Bbb
S}^{n-1}$ in ${\Bbb R}^n.$ By the assumption, there is a set
$E\subset [r_1, r_0]$ of a positive linear Lebesgue measure such
that $q_{z_0}(t)$ is finite for almost all $t\in E.$ Thus, $I\ne 0$
in~(\ref{eq13}). In this case, the function
$\eta_0(t)=\frac{1}{Itq_{z_0}^{1/(n-1)}(t)}$ satisfies the
relation~(\ref{eqA2}) for $r_2:=r_0.$ Substituting this function
into the right-hand side of the inequality~(\ref{eq11}) and applying
Fubini's theorem, we obtain that
\begin{equation}\label{eq14}
M(\Gamma(|\alpha_k|, |\beta_k|, D))\leqslant
\frac{\omega_{n-1}}{I^{n-1}}<\infty\,.
\end{equation}
The relation~(\ref{eq14}) contradicts the condition~(\ref{eq7}). The
obtained contradiction refutes the assumption that there is no limit
of the mapping $f$ at the point $x_0.$

It remains to check the equality
$\overline{f}(\overline{D})=\overline{D^{\,\prime}}_P.$  Obviously,
$\overline{f}(\overline{D})\subset\overline{D^{\,\prime}}_P.$ We
show that $\overline{D^{\,\prime}}_P\subset
\overline{f}(\overline{D}).$  Indeed, let $y_0\in
\overline{D^{\,\prime}}_P.$ Then either  $y_0\in D^{\,\prime}$ or
$y_0\in E_{D^{\,\prime}}.$ If $y_0\in D^{\,\prime},$ then
$y_0=f(x_0)$ and $y_0\in\overline{f}(\overline{D}),$ because $f$
maps $D$ onto $D^{\,\prime}.$ Finally, let $y_0\in
E_{D^{\,\prime}},$ then by the regularity of the domain
$D^{\,\prime}$ there is a sequence $y_k\in D^{\,\prime}$ such that
$\rho(y_k, y_0)\rightarrow 0$ as $k\rightarrow\infty,$ $y_k=f(x_k)$
and $x_k\in D,$ where $\rho$ is one of possible metrics in
$\overline{D^{\,\prime}}_P.$ Since $\overline{{\Bbb R}^n}$ is a
compact space, we may assume that $x_k\rightarrow x_0,$ where
$x_0\in\overline{D}.$ Observe that $x_0\in
\partial D,$ because $f$ is open. Now
$f(x_0)=y_0\in \overline{f}(\partial D)\subset
\overline{f}(\overline{D}).$ The theorem is completely
proved.~$\Box$
\end{proof}

\medskip
\begin{corollary}\label{cor2}
{\sl\, The statement of Theorem~\ref{th3} remains valid if the
condition for the existence of the set $E\subset[r_1, r_2]$ with the
specified property is replaced by the condition $Q\in
L^1(D^{\,\prime}).$ }
\end{corollary}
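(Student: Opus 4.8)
The plan is to reduce the hypothesis of the corollary to the hypothesis already assumed in Theorem~\ref{th3}: I will show that the condition $Q\in L^1(D^{\,\prime})$ forces, for every $y_0\in D^{\,\prime}$ and every $0<r_1<r_2<r_0$, the existence (indeed with full measure) of the separating set $E\subset[r_1,r_2]$ on which $Q$ is integrable over the spheres $S(y_0,r)$. Once this is established, the conclusion---namely the continuous extension $\overline{f}:\overline{D}\to\overline{D^{\,\prime}}_P$ with $\overline{f}(\overline{D})=\overline{D^{\,\prime}}_P$---is precisely the assertion of Theorem~\ref{th3}, and nothing further is needed.

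First I would fix $y_0\in D^{\,\prime}$ and express the integral of $Q$ in spherical coordinates centered at $y_0$. Since $Q\geqslant 0$ is Lebesgue measurable and $Q\equiv 0$ outside $f(D)=D^{\,\prime}$, Fubini's theorem in the form of the polar decomposition of Lebesgue measure gives
\begin{equation*}
\int\limits_{D^{\,\prime}}Q(y)\,dm(y)=\int\limits_0^{\infty}\left(\int\limits_{S(y_0,r)}Q(y)\,d\mathcal{H}^{n-1}(y)\right)dr=\int\limits_0^{\infty}\omega_{n-1}r^{n-1}q_{y_0}(r)\,dr\,,
\end{equation*}
where $q_{y_0}$ is the spherical mean defined in the proof of Theorem~\ref{th3}. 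By assumption the left-hand side is finite, so the nonnegative integrand $r\mapsto\int_{S(y_0,r)}Q\,d\mathcal{H}^{n-1}$ is finite for almost every $r\in(0,\infty)$ with respect to linear Lebesgue measure.

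Consequently, for any $0<r_1<r_2<r_0$ the set
\begin{equation*}
E=\left\{r\in[r_1,r_2]:\ \int\limits_{S(y_0,r)}Q(y)\,d\mathcal{H}^{n-1}(y)<\infty\right\}
\end{equation*}
differs from $[r_1,r_2]$ by a set of linear measure zero; in particular $E$ has positive linear Lebesgue measure (equal to $r_2-r_1$), and $Q$ is integrable on $S(y_0,r)$ for every, hence for almost every, $r\in E$. This is exactly the hypothesis of Theorem~\ref{th3}, so applying that theorem completes the proof.

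The argument is essentially routine; the only points requiring care are that $Q$ is extended by zero outside $D^{\,\prime}$ so that the polar decomposition is applied to a genuinely integrable function, and that the null set of ``bad'' radii is independent of the choice of the pair $(r_1,r_2)$, which is automatic since the full-measure statement holds on all of $(0,\infty)$ at once. No separate obstacle arises.
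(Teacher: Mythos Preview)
Your proposal is correct and follows essentially the same route as the paper: apply Fubini's theorem (polar decomposition) to see that $Q\in L^1(D^{\,\prime})$ forces $\int_{S(y_0,r)}Q\,d\mathcal{H}^{n-1}<\infty$ for almost every $r$, so the set $E$ required in Theorem~\ref{th3} exists (with full measure in $[r_1,r_2]$), and then invoke that theorem. The paper's proof is the same argument, written more briefly.
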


\medskip
\begin{proof}
Indeed, by the Fubini theorem (see, e.g.,
\cite[Theorem~8.1.III]{Sa})
$$\int\limits_{B(y_0, r_0)}Q(x)\,dm(x)=\int\limits_0^1\int\limits_{S(y_0, r)}
Q(x)\,d\mathcal{H}^{n-1}dr<\infty\,,$$
because the function $Q$ is integrable in $D^{\,\prime}$ (here we
set $Q(y)\equiv 0$ for $y\not\in D^{\,\prime}$). It follows that $Q$
is integrable on almost all spheres $S(y_0, r),$ $0<r_0<\leqslant
r_0.$ In this case, the desired statement follows from
Theorem~\ref{th3} for $E=[r_1, r_2].$~$\Box$
\end{proof}

\medskip
The following lemma was proved in~\cite[Lemma~2.1]{SSI}.

\medskip
\begin{lemma}\label{lem1A}{\sl\,
Let $D\subset {\Bbb R}^n,$ $n\geqslant 2,$ be a regular domain, and
let $x_m\rightarrow P_1,$ $y_m\rightarrow P_2$ as
$m\rightarrow\infty,$ $P_1, P_2\in \overline{D}_P,$ $P_1\ne P_2.$
Suppose that $d_m, g_m,$ $m=1,2,\ldots,$ are sequences of descending
domains, corresponding to $P_1$ and $P_2,$ $d_1\cap
g_1=\varnothing,$ and $x_0, y_0\in D\setminus (d_1\cup g_1).$ Then
there are arbitrarily large $k_0\in {\Bbb N},$ $M_0=M_0(k_0)\in
{\Bbb N}$ and $0<t_1=t_1(k_0), t_2=t_2(k_0)<1$ for which the
following condition is fulfilled: for each $m\geqslant M_0$ there
are
non-intersecting paths
$$\gamma_{1,m}(t)=\quad\left\{
\begin{array}{rr}
\widetilde{\alpha}(t), & t\in [0, t_1],\\
\widetilde{\alpha_m}(t), & t\in [t_1, 1]\end{array}
\right.\,,\quad\gamma_{2,m}(t)=\quad\left\{
\begin{array}{rr}
\widetilde{\beta}(t), & t\in [0, t_2],\\
\widetilde{\beta_m}(t), & t\in [t_2, 1]\end{array}\,, \right.$$
such that:

1) $\gamma_{1, m}(0)=x_0,$ $\gamma_{1, m}(1)=x_m,$ $\gamma_{2,
m}(0)=y_0$ and $\gamma_{2, m}(1)=y_m;$

2) $|\gamma_{1, m}|\cap \overline{g_{k_0}}=\varnothing=|\gamma_{2,
m}|\cap \overline{d_{k_0}};$

3) $\widetilde{\alpha_m}(t)\in d_{k_0+1}$ for $t\in [t_1, 1]$ and
$\widetilde{\beta_m}(t)\in g_{k_0+1}$ for $t\in [t_2, 1]$ (see
Figure~\ref{fig3}).
\begin{figure}[h]
\centering\includegraphics[width=200pt]{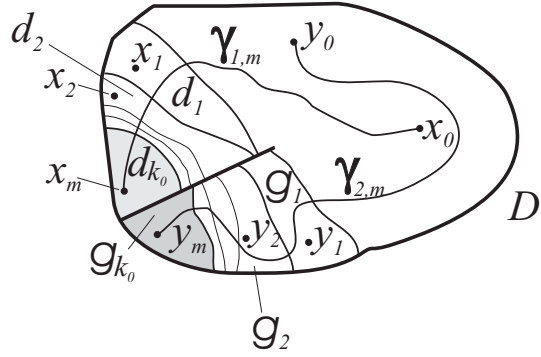} \caption{To
the statement of Lemma~\ref{lem1A}}\label{fig3}
\end{figure}
}
\end{lemma}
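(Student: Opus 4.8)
The plan is to transport the whole problem to the quasiconformal model of $D$, build two mutually disjoint arcs there whose tails run into the neighborhoods of the two distinguished boundary points, and then push everything back by the model map. First I would invoke regularity of $D$: by the very definition of the metric in~(\ref{eq5}) there is a quasiconformal homeomorphism $g\colon D_0\rightarrow D,$ where $D_0$ has locally quasiconformal boundary and $\overline{D_0}$ is compact, inducing a homeomorphism $\overline{D_0}\rightarrow\overline{D}_P.$ Transport all data by $g^{-1}$: the distinct prime ends $P_1\ne P_2$ become distinct boundary points $p_1,p_2\in\partial D_0,$ the descending domains $d_m,g_m$ become relative neighborhoods $U_m=g^{-1}(d_m),$ $V_m=g^{-1}(g_m)$ shrinking to $p_1,p_2,$ and $a_0:=g^{-1}(x_0),$ $b_0:=g^{-1}(y_0),$ $a_m:=g^{-1}(x_m),$ $b_m:=g^{-1}(y_m).$ Since $p_1\ne p_2$ and the neighborhoods shrink to them, for all sufficiently large $k_0$ we have $\overline{U_{k_0}}\cap\overline{V_{k_0}}=\varnothing$ in $\overline{D_0};$ this is where the phrase \emph{arbitrarily large} $k_0$ enters. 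Fixing such a $k_0,$ the convergences $a_m\rightarrow p_1,$ $b_m\rightarrow p_2$ furnish $M_0=M_0(k_0)$ with $a_m\in U_{k_0+1}$ and $b_m\in V_{k_0+1}$ for all $m\geqslant M_0.$

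The heart of the argument is the construction, in the model, of two disjoint fixed base arcs. Pick fixed points $c_1\in U_{k_0+1}$ and $c_2\in V_{k_0+1}.$ I want a simple arc $\widetilde A^0$ from $a_0$ to $c_1$ in $D_0$ avoiding $\overline{V_{k_0}},$ a simple arc $\widetilde B^0$ from $b_0$ to $c_2$ avoiding $\overline{U_{k_0}},$ and $\widetilde A^0\cap\widetilde B^0=\varnothing.$ Their existence uses that $\overline{D_0}$ is a compact region with locally quasiconformal (hence locally connected) boundary in which $p_1,p_2$ are separated: when $n\geqslant 3$ one simply joins the points in the connected set $D_0$ and perturbs the two arcs into general position; when $n=2$ I would first draw $\widetilde A^0$ from $a_0$ into $U_{k_0+1},$ which is possible while avoiding the small set $\overline{V_{k_0}}$ sitting near $p_2\ne p_1,$ then note that $D_0\setminus\widetilde A^0$ is still connected (removing a simple arc that runs from an interior point into a boundary neighborhood does not disconnect a planar domain) and route $\widetilde B^0$ inside this complement to $c_2,$ avoiding $\overline{U_{k_0}}.$

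For $m\geqslant M_0$ I then attach the tails. Since $U_{k_0+1}$ and $V_{k_0+1}$ are connected, join $c_1$ to $a_m$ by an arc $\widetilde A_m\subset U_{k_0+1}$ and $c_2$ to $b_m$ by $\widetilde B_m\subset V_{k_0+1}.$ Concatenate (writing $*$ for concatenation) and reparametrize so that $\widetilde A^0$ occupies $[0,t_1]$ and $\widetilde A_m$ occupies $[t_1,1],$ with breakpoint value $c_1,$ and similarly for $\widetilde B^0,\widetilde B_m$ with breakpoint $t_2;$ the numbers $t_1,t_2\in(0,1)$ depend only on $k_0.$ Finally push forward by the homeomorphism $g,$ setting $\widetilde\alpha=g\circ\widetilde A^0,$ $\widetilde{\alpha_m}=g\circ\widetilde A_m,$ $\widetilde\beta=g\circ\widetilde B^0,$ $\widetilde{\beta_m}=g\circ\widetilde B_m,$ so that $\gamma_{1,m}=g\circ(\widetilde A^0*\widetilde A_m)$ and $\gamma_{2,m}=g\circ(\widetilde B^0*\widetilde B_m)$ are exactly the paths in the statement.

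It remains to verify the three conditions and non-intersection, all of which reduce under the homeomorphism $g$ to properties already arranged in the model. Condition~1) is immediate from the endpoints. Condition~3) holds because $\widetilde{\alpha_m}=g(\widetilde A_m)\subset g(U_{k_0+1})=d_{k_0+1}$ and, symmetrically, $\widetilde{\beta_m}\subset g_{k_0+1}.$ For~2), the base $\widetilde A^0$ avoids $\overline{V_{k_0}}$ by construction and the tail $\widetilde A_m\subset U_{k_0+1}\subset U_{k_0},$ which is disjoint from $\overline{V_{k_0}}$ by the choice of $k_0;$ applying $g$ and using that $|\gamma_{1,m}|\subset D$ gives $|\gamma_{1,m}|\cap\overline{g_{k_0}}=\varnothing,$ and symmetrically $|\gamma_{2,m}|\cap\overline{d_{k_0}}=\varnothing.$ Non-intersection of $\gamma_{1,m}$ and $\gamma_{2,m}$ is then checked on the four pieces: the two tails lie in the disjoint sets $d_{k_0+1},g_{k_0+1};$ each tail misses the opposite base by condition~2); and the two bases are disjoint by construction. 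The main obstacle is precisely the disjoint-arc construction of the second paragraph in the planar case $n=2,$ where one cannot separate two arcs by a small perturbation; the passage to the locally quasiconformal model is what removes it, since there $P_1,P_2$ become genuinely separated boundary points and the slit/connectedness argument applies.
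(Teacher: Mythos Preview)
The paper does not give its own proof of this lemma; immediately before the statement it simply records that ``the following lemma was proved in~\cite[Lemma~2.1]{SSI}''.  So there is nothing in the present paper to compare your argument against.  Your strategy---pass to the quasiconformal model $D_0$ via the map $g$ of~(\ref{eq5}), build the two arcs there, and push them back---is the natural one and almost certainly coincides with the approach of~\cite{SSI}.  The reduction steps (choice of $k_0$ with $\overline{U_{k_0}}\cap\overline{V_{k_0}}=\varnothing$, choice of $M_0$, attachment of the tails inside $U_{k_0+1}$ and $V_{k_0+1}$, and the four-piece verification of disjointness and of conditions~1)--3)) are all correct.

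Where your sketch is incomplete is precisely the passage you yourself flag as the main obstacle: the construction of the two disjoint \emph{base} arcs when $n=2$.  You argue that after drawing $\widetilde A^{\,0}$ one can route $\widetilde B^{\,0}$ in $D_0\setminus\widetilde A^{\,0}$ ``avoiding $\overline{U_{k_0}}$''.  Two things are being assumed without proof.  First, that $a_0$ and $c_1$ lie in the same path-component of $D_0\setminus\overline{V_{k_0}}$, so that $\widetilde A^{\,0}$ exists at all; second, and more seriously, that $b_0$ and $c_2$ lie in the same path-component of $D_0\setminus\bigl(\widetilde A^{\,0}\cup\overline{U_{k_0}}\bigr)$.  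The sentence ``removing a simple arc that runs from an interior point into a boundary neighborhood does not disconnect a planar domain'' is correct for the arc alone (a compact Jordan arc with both endpoints interior to a planar domain does not separate it), but it does not address the additional removal of the closed set $\overline{U_{k_0}}$, which meets $\partial D_0$.  For a general regular domain (not assumed simply connected) this extra connectedness is not automatic.  The clean way to close the gap is to go back to $D$ itself and use the cut structure: the domain $g_{k_0}$ is one component of $D\setminus\sigma^{\,\prime}_{k_0}$, so for $k_0$ large the complement $D\setminus\overline{g_{k_0}}$ coincides with the other component(s) of $D\setminus\sigma^{\,\prime}_{k_0}$, and one checks that $x_0$, $y_0$ and all of $d_{k_0}$ lie in a single such component; then repeat the arc-removal argument inside that component.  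As written, the planar paragraph asserts more than it proves.
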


\medskip
Let $\gamma_{1, m}$ and $\gamma_{2, m}$ be the paths from
Lemma~\ref{lem1A}, and let $|\gamma_{1, m}|$ and $|\gamma_{2, m}|$
be their images in ${\Bbb R}^n,$ respectively. The following
statement is proved in~\cite[Lemma~2.2]{SSI} for the case of
homeomorphisms. Its proof and content are similar
to~\cite[Lemma~2]{Sev$_4$}, however, for completeness of
presentation, we give it in full in the text.

\medskip
\begin{lemma}\label{lem4A}
{\sl Let $D$ and $D^{\,\prime}$ be domains in ${\Bbb R}^n,$
$n\geqslant 2,$ let $D^{\,\prime}$ be a regular domain, and let $f$
be an open discrete and closed mapping of $D$ onto $D^{\,\prime}$
satisfying the condition~(\ref{eq2*A}) at any point
$y_0\in\overline{D^{\,\prime}}$ with some function $Q\in
L^1(D^{\,\prime}).$ Let $d_m$ be a sequence of decreasing domains
which correspond to cuts $\sigma_m,$ $m=1,2,\ldots, $ lying on the
spheres $S(\overline{x_0}, r_m)$ and such that $\overline{x_0}\in
\partial D^{\,\prime},$ wherein $r_m\rightarrow 0$ as $m\rightarrow\infty.$
Then in the conditions and notation of Lemma~\ref{lem1A} we may
choose the number $k_0\in {\Bbb N}$ for which there is $0<N=N(k_0,
\Vert Q\Vert_1, D^{\,\prime})<\infty,$ independent on $m$ and $f,$
such that
$$M(\Gamma_m)\leqslant N,\qquad m\geqslant M_0=M_0(k_0)\,,$$
where $\Gamma_m$ is a family of paths $\gamma:[0, 1]\rightarrow D$
in $D$ such that $f(\gamma)\in \Gamma(|\gamma_{1, m}|, |\gamma_{2,
m}|, D^{\,\prime}).$ }
\end{lemma}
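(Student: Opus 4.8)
The plan is to bound $M(\Gamma_m)$ by comparing it with the family $\Gamma_f(\overline{x_0},r_{k_0+1},r_{k_0})$ appearing in the inverse Poletsky inequality~(\ref{eq2*A}) and then to use the mere integrability $Q\in L^1(D^{\,\prime})$ through a \emph{constant} radial test function. Fix the number $k_0$ and the threshold $M_0=M_0(k_0)$ produced by Lemma~\ref{lem1A}, and look at the spherical collar bounded by the two consecutive cuts $\sigma_{k_0+1}\subset S(\overline{x_0},r_{k_0+1})$ and $\sigma_{k_0}\subset S(\overline{x_0},r_{k_0})$, where $r_{k_0+1}<r_{k_0}$. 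Since $f$ satisfies~(\ref{eq2*A}) at every point of $\overline{D^{\,\prime}}$, and $\overline{x_0}\in\partial D^{\,\prime}$, the inequality is available with centre $\overline{x_0}$.

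First I would record the topological separation coming from the cut structure: for $j=k_0,k_0+1$ one has $\partial d_j\cap D^{\,\prime}\subset\sigma_j\subset S(\overline{x_0},r_j)$, so that any path in $D^{\,\prime}$ which leaves $d_j$ must meet the sphere $S(\overline{x_0},r_j)$. By condition~(3) of Lemma~\ref{lem1A} the terminal arc $\widetilde{\alpha_m}$ of $\gamma_{1,m}$, together with $x_m$, lies in $d_{k_0+1}$, while by condition~(2) the whole continuum $|\gamma_{2,m}|$ avoids $\overline{d_{k_0}}$, i.e.\ $|\gamma_{2,m}|\subset D^{\,\prime}\setminus\overline{d_{k_0}}$. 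Consequently, for $\gamma\in\Gamma_m$ the image $f(\gamma)$ links a point of $|\gamma_{1,m}|$ reaching into $d_{k_0+1}$ with a point of $|\gamma_{2,m}|$ lying outside $\overline{d_{k_0}}$; passing to the subpath of $f(\gamma)$ between its last meeting of $S(\overline{x_0},r_{k_0+1})$ and the first subsequent meeting of $S(\overline{x_0},r_{k_0})$ yields a subpath in $\Gamma\bigl(S(\overline{x_0},r_{k_0+1}),S(\overline{x_0},r_{k_0}),A(\overline{x_0},r_{k_0+1},r_{k_0})\bigr)$. In other words $\Gamma_m>\Gamma_f(\overline{x_0},r_{k_0+1},r_{k_0})$, whence $M(\Gamma_m)\leqslant M\bigl(\Gamma_f(\overline{x_0},r_{k_0+1},r_{k_0})\bigr)$ by the minorization property of the modulus (cf.~\cite[Theorem~1(c)]{Fu}).

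Then I would substitute into the right-hand side of~(\ref{eq2*A}) the function $\eta(t)=1/(r_{k_0}-r_{k_0+1})$ on $(r_{k_0+1},r_{k_0})$, which satisfies the normalization~(\ref{eqA2}). Writing $A=A(\overline{x_0},r_{k_0+1},r_{k_0})$ and using $Q\geqslant 0$ this gives
$$
M(\Gamma_m)\leqslant\frac{1}{(r_{k_0}-r_{k_0+1})^n}\int\limits_{A\cap D^{\,\prime}}Q(y)\,dm(y)\leqslant\frac{\Vert Q\Vert_1}{(r_{k_0}-r_{k_0+1})^n}=:N\,.
$$
The number $N$ depends only on $k_0$ (through the two radii $r_{k_0},r_{k_0+1}$, which are fixed once $k_0$ is fixed), on $\Vert Q\Vert_1$ and on $D^{\,\prime}$, but not on $m$ nor on the individual mapping $f$; this is exactly the asserted bound for all $m\geqslant M_0$. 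Here the hypothesis $Q\in L^1(D^{\,\prime})$ is precisely what makes the integral finite \emph{without} invoking the divergence condition~(\ref{eq2}), and the constant test function is the crucial simplification.

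I expect the main obstacle to be the separation/crossing step, namely verifying that the image of \emph{every} $\gamma\in\Gamma_m$ genuinely traverses the collar between $\sigma_{k_0+1}$ and $\sigma_{k_0}$. The delicate point is that $|\gamma_{1,m}|$ is not contained in $d_{k_0+1}$: it runs out to the fixed point $x_0$, so an image path could a priori begin on the outer part of $|\gamma_{1,m}|$, and symmetrically for $|\gamma_{2,m}|$. This is exactly why the refined construction of Lemma~\ref{lem1A} is needed, and I would handle it by splitting $\Gamma_m$ according to which portions of the two continua the endpoints of $f(\gamma)$ fall on: paths whose image enters $d_{k_0+1}$ cross the ring at $\overline{x_0}$ and are controlled as above; paths whose image enters $g_{k_0+1}$ cross the analogous ring centred at the impression of $P_2$ and are controlled by the same estimate with $Q\in L^1(D^{\,\prime})$; and the residual paths join the \emph{fixed} sub-arcs $\widetilde{\alpha}$ and $\widetilde{\beta}$, so their modulus is bounded by a quantity independent of $m$ because these arcs do not depend on $m$. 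Once the crossing (and this case analysis) is secured, the modulus estimate is routine.
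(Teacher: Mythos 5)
There is a genuine gap. The central claim of your main argument, namely that every $\gamma\in\Gamma_m$ satisfies $\Gamma_m>\Gamma_f(\overline{x_0},r_{k_0+1},r_{k_0})$, is false: a path of $\Gamma_m$ may have its image running from the fixed arc $|\widetilde{\alpha}|$ (which is not contained in $d_{k_0+1}$) to the fixed arc $|\widetilde{\beta}|$, never entering $d_{k_0+1}$ and hence never traversing the spherical collar at $\overline{x_0}$. You correctly identify this obstacle in your last paragraph, but the repair you sketch does not close it. Your case of paths whose image enters $g_{k_0+1}$ is made to rest on ``the analogous ring centred at the impression of $P_2$'' --- yet the hypotheses only place the cuts $\sigma_m$ bounding the $d_m$ (for $P_1$) on spheres; nothing is assumed about the shape of the cuts $\sigma^{\,\prime}_m$ bounding the $g_m$, so the inverse Poletsky inequality~(\ref{eq2*A}) cannot be applied over such a ``ring.'' Worse, for the residual paths joining $|\widetilde{\alpha}|$ to $|\widetilde{\beta}|$ you assert that ``their modulus is bounded \ldots because these arcs do not depend on $m$.'' That reasoning conflates the modulus of the image family $\Gamma(|\widetilde{\alpha}|,|\widetilde{\beta}|,D^{\,\prime})$ with the modulus of the preimage family, which is what $M(\Gamma_m)$ measures; since $f$ is only open, discrete and closed (not a homeomorphism), the finiteness of the former gives no bound on the latter without invoking~(\ref{eq2*A}) somewhere.

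The missing mechanism, which is exactly what the paper supplies, is a finite covering argument on the \emph{fixed} continuum $|\widetilde{\alpha}|$. Setting $\varepsilon_0:=\min\{{\rm dist}\,(|\widetilde{\alpha}|,\overline{g_{k_0}}),\,{\rm dist}\,(|\widetilde{\alpha}|,|\widetilde{\beta}|)\}>0$ (a quantity independent of $m$, since $|\gamma_{2,m}|\subset|\widetilde{\beta}|\cup g_{k_0+1}$ stays at distance at least $\varepsilon_0$ from $|\widetilde{\alpha}|$), one covers $|\widetilde{\alpha}|$ by $N_0$ balls $B(z_i,\varepsilon_0/4)$ and observes that the image of any path whose image starts on $|\widetilde{\alpha}|$ must cross one of the annuli $A(z_i,\varepsilon_0/4,\varepsilon_0/2)$; applying~(\ref{eq2*A}) at each centre $z_i$ with the constant test function $\eta\equiv 4/\varepsilon_0$ and summing gives a bound of the form $N_0 4^n\Vert Q\Vert_1/\varepsilon_0^n$. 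This handles in one stroke both of your problematic cases, and combined with your (correct) treatment of the paths whose image starts on $\widetilde{\alpha}_m\subset d_{k_0+1}$ --- which coincides with the paper's estimate $M(\Gamma_m^2)\leqslant\Vert Q\Vert_1/(r_{k_0}-r_{k_0+1})^n$ --- it yields the stated constant $N$. Without this covering step your proof establishes only the bound for the subfamily $\Gamma_m^2$.
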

\begin{proof}
Let $k_0$ be an arbitrary number for which the statement of
Lemma~\ref{lem1A} holds. By the definition of $\gamma_{1, m}$ and of
the family $\Gamma_m$ we may write that
\begin{equation}\label{eq7A}
\Gamma_m=\Gamma_m^1\cup \Gamma_m^2\,,
\end{equation}
where $\Gamma_m^1$ is a family of paths $\gamma\in\Gamma_m$ such
that $f(\gamma)\in \Gamma(|\widetilde{\alpha}|, |\gamma_{2, m}|,
D^{\,\prime})$ and $\Gamma_m^2$ is a family of paths
$\gamma\in\Gamma_m$ such that $f(\gamma)\in
\Gamma(|\widetilde{\alpha}_m|, |\gamma_{2, m}|, D^{\,\prime}).$

\medskip
Taking into account the notation of Lemma~\ref{lem1A}, we put
$$\varepsilon_0:=\min\{{\rm dist}\,(|\widetilde{\alpha}|,
\overline{g_{k_0}}), {\rm dist}\,(|\widetilde{\alpha}|,
|\widetilde{\beta}|)\}>0\,.$$
Let us consider the covering $\bigcup\limits_{x\in
|\widetilde{\alpha}|}B(x, \varepsilon_0/4)$ of
$|\widetilde{\alpha}|.$ Since $|\widetilde{\alpha}|$ is a compactum
in $D^{\,\prime},$ there are numbers $i_1,\ldots, i_{N_0}$ such that
$|\widetilde{\alpha}|\subset \bigcup\limits_{i=1}^{N_0} B(z_i,
\varepsilon_0/4),$ where $z_i\in |\widetilde{\alpha}|$ for
$1\leqslant i\leqslant N_0.$ By~\cite[Theorem~1.I.5.46]{Ku}
\begin{equation}\label{eq5E}
\Gamma(|\widetilde{\alpha}|, |\gamma_{2, m}|,
D^{\,\prime})>\bigcup\limits_{i=1}^{N_0} \Gamma(S(z_i,
\varepsilon_0/4), S(z_i, \varepsilon_0/2), A(z_i, \varepsilon_0/4,
\varepsilon_0/2))\,.
\end{equation}
Fix $\gamma\in \Gamma_m^1,$ $\gamma:[0, 1]\rightarrow D,$
$\gamma(0)\in |\widetilde{\alpha}|,$ $\gamma(1)\in |\gamma_{2, m}|.$
It follows from~(\ref{eq5E}) that $f(\gamma)$ has a subpath
$f(\gamma)_1:=f(\gamma)|_{[p_1, p_2]}$ such that
$$f(\gamma)_1\in \Gamma(S(z_i, \varepsilon_0/4), S(z_i,
\varepsilon_0/2), A(z_i, \varepsilon_0/4,  \varepsilon_0/2))$$ for
some $1\leqslant i\leqslant N_0.$ Then $\gamma|_{[p_1, p_2]}$ is a
subpath of $\gamma$ and belongs to~$\Gamma_f(z_i, \varepsilon_0/4,
\varepsilon_0/2),$ because
$$f(\gamma|_{[p_1, p_2]})=f(\gamma)|_{[p_1, p_2]}\in\Gamma(S(z_i,
\varepsilon_0/4), S(z_i, \varepsilon_0/2), A(z_i, \varepsilon_0/4,
\varepsilon_0/2)).$$ Thus
\begin{equation}\label{eq6E}
\Gamma_m^1>\bigcup\limits_{i=1}^{N_0} \Gamma_f(z_i, \varepsilon_0/4,
\varepsilon_0/2)\,.
\end{equation}
Put
$$\eta(t)= \left\{
\begin{array}{rr}
4/\varepsilon_0, & t\in [\varepsilon_0/4, \varepsilon_0/2],\\
0,  &  t\not\in [\varepsilon_0/4, \varepsilon_0/2]
\end{array}
\right. \,.$$
Observe that the function~$\eta$ satisfies the
relation~(\ref{eqA2}). Then, by the definition of $f$
in~(\ref{eq2*A}), by the relation~(\ref{eq6E}) and due to the
subadditivity of the modulus of families of paths
(see~\cite[Theorem~6.2]{Va}), we obtain that
\begin{equation}\label{eq1H}
M(\Gamma_m^1)\leqslant \sum\limits_{i=1}^{N_0} M(\Gamma_f(z_i,
\varepsilon_0/4, \varepsilon_0/2))\leqslant \sum\limits_{i=1}^{N_0}
\frac{N_04^n\Vert Q\Vert_1}{\varepsilon^n_0}\,,\qquad m\geqslant
M_0\,,
\end{equation}
where $\Vert Q\Vert_1=\int\limits_{D^{\,\prime}}Q(x)\,dm(x).$
In addition, by~\cite[Theorem~1.I.5.46]{Ku},
$\Gamma_m^2>\Gamma_f(\overline{x_0}, r_{k_0+1}, r_{k_0}).$
Arguing similarly as above, we put
$$\eta(t)= \left\{
\begin{array}{rr}
1/(r_{k_0}-r_{k_0+1}), & t\in [r_{k_0+1}, r_{k_0}],\\
0,  &  t\not\in [r_{k_0+1}, r_{k_0}]
\end{array}
\right. \,.$$
Now, by the last relation we obtain that
\begin{equation}\label{eq4DD}
M(\Gamma_m^2)\leqslant \frac{\Vert
Q\Vert_1}{(r_{k_0}-r_{k_0+1})^n}\,,\quad m\geqslant M_0\,.
\end{equation}
Thus, by~(\ref{eq7A}), (\ref{eq1H}) and~(\ref{eq4DD}), due to the
subadditivity of the modulus of families of paths
(see~\cite[Theorem~6.2]{Va}), we obtain that
$$M(\Gamma_m)\leqslant
\left(\frac{N_04^n}{\varepsilon^n_0}+\frac{1}{(r_{k_0}-r_{k_0+1})^n}\right)\Vert
Q\Vert_1\,,\quad m\geqslant M_0\,.$$
The right part of the last relation does not depend on~$m,$ so we
may put
$$N:=\left(\frac{N_04^n}{\varepsilon^n_0}+\frac{1}{(r_{k_0}-r_{k_0+1})^n}\right)\Vert
Q\Vert_1\,.$$ Lemma~\ref{lem4A} is proved.~$\Box$
\end{proof}

\medskip
Given $\delta>0, M>0$ domains $D, D^{\,\prime}\subset {\Bbb R}^n,$
$n\geqslant 2,$ and a continuum $A\subset D^{\,\prime}$ denote by
${\frak S}_{\delta, A, M}(D, D^{\,\prime})$ the family of all open
discrete and closed mappings $f$ of $D$ onto $D^{\,\prime}$ such
that the condition~(\ref{eq2*A}) holds for any $y_0\in D^{\,\prime}$
and such that $h(f^{\,-1}(A),
\partial D)\geqslant~\delta$ and $\Vert Q_f\Vert_{L^1(D^{\,\prime})}\leqslant M.$
The following theorem with certain differences in its formulation is
given in~\cite[Theorem~2]{Sev$_4$}, and its proof is completely
similar to proof of this statement. However, for completeness, we
present it in full in the text.

\medskip
\begin{theorem}\label{th2}
{\sl Let $D$ be a domain with a weakly flat boundary, and let
$D^{\,\prime}$ be a regular domain. Now, any $f\in{\frak S}_{\delta,
A, M}(D, D^{\,\prime})$ has a continuous extension
$\overline{f}:\overline{D}\rightarrow \overline{D^{\,\prime}}_P,$
wherein $\overline{f}(\overline{D})=\overline{D^{\,\prime}}_P$ and,
in addition, the family ${\frak S}_{\delta, A, M }(\overline{D},
\overline{D^{\,\prime}})$ of all extended mappings
$\overline{f}:\overline{D}\rightarrow \overline{D^{\,\prime}}_P$ is
equicontinuous in  $\overline{D}.$ }
\end{theorem}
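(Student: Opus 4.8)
The plan is to prove Theorem~\ref{th2} in two parts, treating the continuous extension first and the equicontinuity second, mirroring the structure of Theorem~\ref{th1}. For the existence of a continuous extension $\overline{f}:\overline{D}\rightarrow\overline{D^{\,\prime}}_P$ with $\overline{f}(\overline{D})=\overline{D^{\,\prime}}_P$, I would simply invoke Theorem~\ref{th3} together with Corollary~\ref{cor2}. Indeed, every $f\in{\frak S}_{\delta, A, M}(D, D^{\,\prime})$ is an open discrete closed mapping of $D$ onto the regular domain $D^{\,\prime}$ satisfying the inverse Poletsky inequality~(\ref{eq2*A}) at each point $y_0\in D^{\,\prime}$, and the hypothesis $\Vert Q_f\Vert_{L^1(D^{\,\prime})}\leqslant M$ gives $Q\in L^1(D^{\,\prime})$; hence Corollary~\ref{cor2} applies and yields the continuous extension together with the surjectivity $\overline{f}(\overline{D})=\overline{D^{\,\prime}}_P$. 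This disposes of the first assertion with essentially no extra work.

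The substantial content is the equicontinuity of the extended family in $\overline{D}$. I would argue by contradiction in the style of the proof of Theorem~\ref{th1}. Suppose equicontinuity fails at some point $x_0\in\overline{D}$; then there exist $\varepsilon_*>0$, sequences $x_m\to x_0$, $y_m\to x_0$ in $\overline{D}$, and mappings $f_m\in{\frak S}_{\delta, A, M}(\overline{D},\overline{D^{\,\prime}})$ with $\rho(\overline{f_m}(x_m),\overline{f_m}(y_m))\geqslant\varepsilon_*$, where $\rho$ is a metric as in~(\ref{eq5}) on $\overline{D^{\,\prime}}_P$. Equicontinuity inside $D$ being already known (or following from the interior modulus estimates), the only genuine case is $x_0\in\partial D$. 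Passing to subsequences using compactness of $(\overline{D^{\,\prime}}_P,\rho)$, one may assume $\overline{f_m}(x_m)\to P_1$ and $\overline{f_m}(y_m)\to P_2$ with $P_1\ne P_2$, both prime ends in $E_{D^{\,\prime}}$ (closedness of $f_m$ sends boundary to boundary). I would then choose cuts $\sigma_m\subset S(\overline{x_0},r_m)$, $r_m\to 0$, defining descending domains $d_m$ for $P_1$ and $g_m$ for $P_2$ with $d_1\cap g_1=\varnothing$, and apply Lemma~\ref{lem1A} to obtain the non-intersecting connecting paths $\gamma_{1,m},\gamma_{2,m}$ with the separation properties listed there.

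The decisive step is the double modulus estimate. On one side, since the boundary of $D$ is weakly flat, the images $|\alpha_m|,|\beta_m|$ of the total $f_m$-liftings of $\gamma_{1,m},\gamma_{2,m}$ accumulate at $x_0$, forcing $M(\Gamma(|\alpha_m|,|\beta_m|,D))\to\infty$ as in~(\ref{eq7}). On the other side, Lemma~\ref{lem4A} furnishes a bound $M(\Gamma_m)\leqslant N$ uniform in $m$ and in $f$, where $\Gamma_m$ consists of paths $\gamma$ with $f_m(\gamma)\in\Gamma(|\gamma_{1,m}|,|\gamma_{2,m}|,D^{\,\prime})$; here the uniformity is exactly what the hypothesis $\Vert Q_f\Vert_{L^1}\leqslant M$ buys, since the constant $N=N(k_0,\Vert Q\Vert_1,D^{\,\prime})$ depends on the $L^1$-norm only through its upper bound $M$. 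Combining a minorization $\Gamma(|\alpha_m|,|\beta_m|,D)>\Gamma_m$ (obtained as in Theorem~\ref{th3} by cutting liftings at the separating spheres) with these two estimates gives the contradiction $\infty = \lim_m M(\Gamma(|\alpha_m|,|\beta_m|,D)) \leqslant N<\infty$.

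The main obstacle I anticipate is the careful bookkeeping in the contradiction case, specifically verifying that the liftings $\alpha_m,\beta_m$ remain bounded away from $x_0$ at their terminal endpoints (using that $f_m^{-1}(A)$ stays at chordal distance $\geqslant\delta$ from $\partial D$, together with finiteness of preimages of a point under open discrete closed maps) so that weak flatness can be legitimately applied, and in confirming that the separation hypothesis $d_1\cap g_1=\varnothing$ of Lemma~\ref{lem1A} can be arranged simultaneously for all $m$ from the single metric $\rho$. Once these geometric facts are in place, the contradiction between Lemma~\ref{lem4A} and weak flatness closes the argument, and the arbitrariness of $x_0\in\partial D$ and of $f$ completes the proof.
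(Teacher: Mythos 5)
Your proposal is correct and follows essentially the same route as the paper: the extension and surjectivity via Theorem~\ref{th3} and Corollary~\ref{cor2}, interior equicontinuity quoted from earlier work, and boundary equicontinuity by contradiction using Lemmas~\ref{lem1A} and~\ref{lem4A}, total $f_m$-liftings whose endpoints lie in $f_m^{-1}(A)$ (so the hypothesis $h(f^{-1}(A),\partial D)\geqslant\delta$ gives the liftings chordal diameter at least $\delta/2$), and weak flatness to contradict the uniform bound $M(\Gamma_m)\leqslant N$. The only cosmetic difference is that the paper fixes $P:=N$ in the weak-flatness condition rather than letting the modulus tend to infinity, and the comparison of $\Gamma(|\gamma^*_{1,m}|,|\gamma^*_{2,m}|,D)$ with $\Gamma_m$ is a direct inclusion rather than a minorization obtained by cutting at spheres.
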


\medskip
\begin{proof}
The possibility of continuous extension of the mapping~$f\in {\frak
S}_{\delta, A, M}(D, D^{\,\prime})$ to the boundary of $D$ follows
by Theorem~\ref{th3} and Corollary~\ref{cor2}. The equicontinuity
of~${\frak S}_{\delta, A, M}(D, D^{\,\prime})$ at inner points
of~$D$ is proved in~\cite[Theorem~1.1]{SSD}.

\medskip
Let us to show the equicontinuity of~${\frak S}_{\delta, A, М
}(\overline{D}, \overline{D^{\,\prime}})$ on $\partial D.$ Assume
the contrary. Now, there is a point $z_0\in
\partial D,$ a number~$\varepsilon_0>0,$ a sequence $z_m\in
\overline{D}$ and a mapping $\overline{f}_m\in {\frak S}_{\delta, A,
M }(\overline{D}, \overline{D^{\,\prime}})$ such that
$z_m\rightarrow z_0$ as $m\rightarrow\infty$ and
\begin{equation}\label{eq12A}
\rho(\overline{f}_m(z_m),
\overline{f}_m(z_0))\geqslant\varepsilon_0,\quad m=1,2,\ldots ,
\end{equation}
where $\rho$ is some of possible metrics
in~$\overline{D^{\,\prime}}_P$ defined in~(\ref{eq1A}). Since
$f_m=\overline{f}_m|_{D}$ has a continuous extension to
$\overline{D},$ we may assume that $z_m\in D$ and, in addition,
there is one more sequence $z^{\,\prime}_m\in D,$
$z^{\,\prime}_m\rightarrow z_0$ as $m\rightarrow\infty$ such that
$\rho(f_m(z^{\,\prime}_m), \overline{f}_m(z_0))\rightarrow 0$ as
$m\rightarrow\infty.$ In this case, it follows from~(\ref{eq12A})
that
$$
\rho(f_m(z_m), f_m(z^{\,\prime}_m))\geqslant\varepsilon_0/2,\quad
m\geqslant m_0\,.
$$
Since~$D^{\,\prime}$ is regular, the
space~$\overline{D^{\,\prime}}_P$ is compact. Thus, we may assume
that~$f_m(z_m)$ and $f_m(z_m^{\,\prime})$ converge to some $P_1,
P_2\in \overline{D^{\,\prime}}_P,$ $P_1\ne P_2,$ as
$m\rightarrow\infty.$ Let $d_m$ and $g_m$ be sequences of decreasing
domains corresponding to prime ends $P_1$ and $P_2,$ respectively.
Due to~\cite[Lemma~3.1]{IS}, cf.~\cite[Lemma~1]{KR$_2$}, we may
assume that the sequence of cuts $\sigma_m$ which corresponds
to~$d_m,$ $m=1,2,\ldots, $ belongs to spheres~$S(\overline{x_0},
r_m),$ where $\overline{x_0}\in
\partial D^{\,\prime}$ and $r_m\rightarrow 0$ as $m\rightarrow\infty.$
Put $x_0, y_0\in A$ such that $x_0\ne y_0$ and $x_0\ne P_1\ne y_0,$
where the continuum~$A\subset D^{\,\prime}$ is taken from the
conditions of Theorem~\ref{th2}. Without loss of generality, we may
assume that $d_1\cap g_1=\varnothing$ and $x_0, y_0\not\in d_1\cup
g_1.$

\medskip
By Lemmas~\ref{lem1A} and~\ref{lem4A}, we may find disjoint paths
$\gamma_{1,m}:[0, 1]\rightarrow D^{\,\prime}$ and $\gamma_{2,m}:[0,
1]\rightarrow D^{\,\prime}$ and a number $N> 0$ such that
$\gamma_{1, m}(0)=x_0,$ $\gamma_{1, m}(1)=f_m(z_m),$ $\gamma_{2,
m}(0)=y_0,$ $\gamma_{2, m}(0)=f_m(z^{\,\prime}_m),$ wherein
\begin{equation}\label{eq15}
M(\Gamma_m)\leqslant N\,,\quad m\geqslant M_0\,,
\end{equation}
where $\Gamma_m$ consists of those and only those paths $\gamma$ в
$D$ for which $f_m(\gamma)\in\Gamma(|\gamma_{1, m}|, |\gamma_{2,
m}|, D^{\,\prime})$ (see Figure~\ref{fig6}).
\begin{figure}
  \centering\includegraphics[width=300pt]{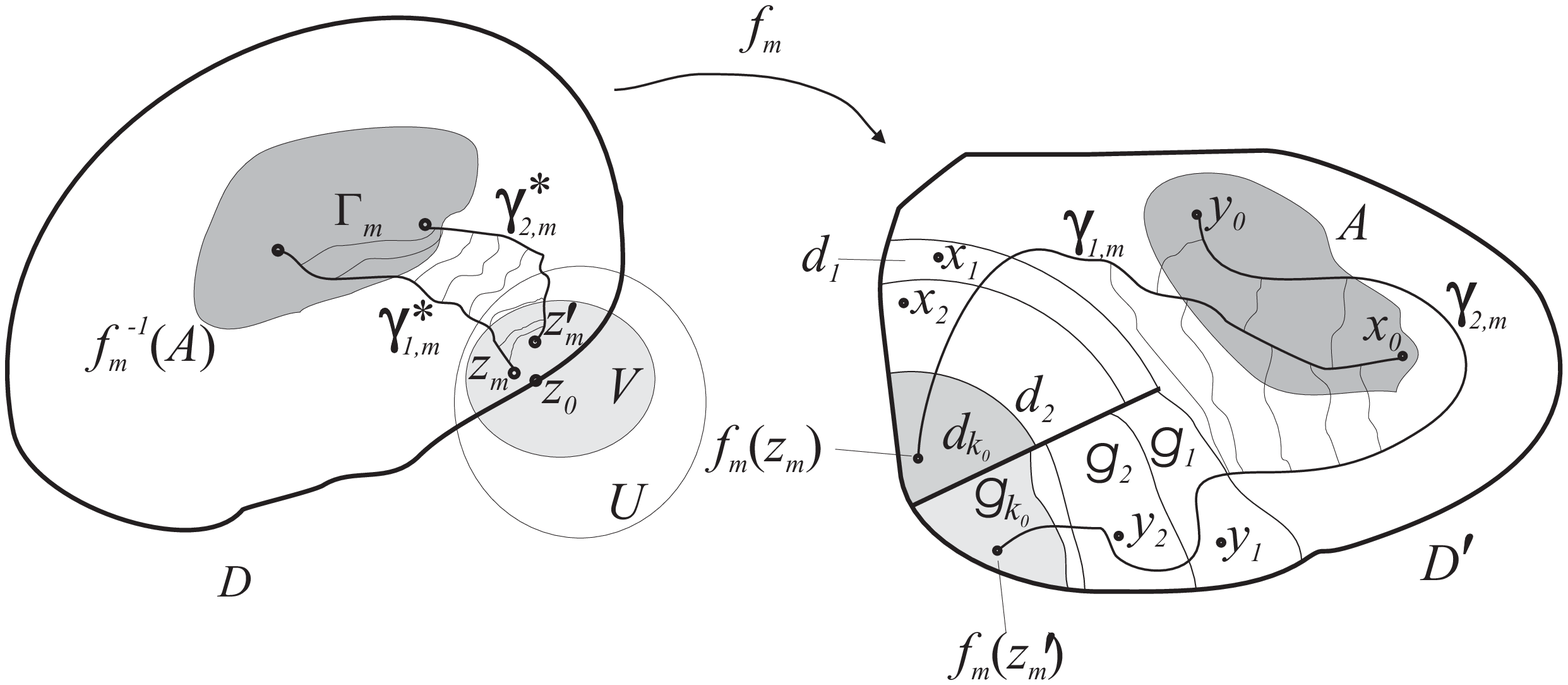}
  \caption{To proof of Theorem~\ref{th2}.}\label{fig6}
 \end{figure}
On the other hand, let~$\gamma^*_{1,m}$ and~$\gamma^*_{2,m}$ be the
total $f_m$-lifting of the paths~$\gamma_{1,m}$ and~$\gamma_{2,m}$
starting at the points~$z_m$ and $z^{\,\prime}_m,$ respectively
(such lifting exist by~\cite[Lemma~ 3.7]{Vu}). Now,
$\gamma^*_{1,m}(1)\in f^{\,-1}_m(A)$ and $\gamma^*_{2,m}(1)\in
f^{\,-1}_m(A).$ Since by the condition $h(f^{\,-1}_{m}(A),
\partial D)>\delta>0,$ $m=1,2,\ldots \,,$ we obtain that
$$h(|\gamma^*_{1, m}|)\geqslant h(z_m, \gamma^*_{1,m}(1)) \geqslant
(1/2)\cdot h(f^{\,-1}_m(A), \partial D)>\delta/2\,,$$
\begin{equation}\label{eq14A}
h(|\gamma^*_{2, m}|)\geqslant h(z^{\,\prime}_m, \gamma^*_{2,m}(1))
\geqslant (1/2)\cdot h(f^{\,-1}_m(A), \partial D)>\delta/2
\end{equation}
for sufficiently large $m\in {\Bbb N}.$
Choose the ball $U:=B_h(z_0, r_0)=\{z\in\overline{{\Bbb R}^n}: h(z,
z_0)<r_0\},$ where $r_0>0$ and $r_0<\delta/4.$ Observe that
$|\gamma^*_{1, m}|\cap U\ne\varnothing\ne |\gamma^*_{1, m}|\cap
(D\setminus U)$ for sufficiently large $m\in{\Bbb N},$ because
$h(f_m(|\gamma_{1, m}|))\geqslant \delta/2$ and $z_m\in|\gamma^*_{1,
m}|,$ $z_m\rightarrow z_0$ as $m\rightarrow\infty.$ Arguing
similarly, we may conclude that~$|\gamma^*_{2, m}|\cap
U\ne\varnothing\ne |\gamma^*_{2, m}|\cap (D\setminus U).$ Since
$|\gamma^*_{1, m}|$ and $|\gamma^*_{2, m}|$ are continua,
by~\cite[Theorem~1.I.5.46]{Ku}
\begin{equation}\label{eq8AA}
|\gamma^*_{1, m}|\cap \partial U\ne\varnothing, \quad |\gamma^*_{2,
m}|\cap
\partial U\ne\varnothing\,.
\end{equation}
Put $P:=N>0,$ where $N$ is a number from the relation~(\ref{eq15}).
Since $D$ has a weakly flat boundary, we may find a neighborhood
$V\subset U$ of $z_0$ such that
\begin{equation}\label{eq9AA}
M(\Gamma(E, F, D))>N
\end{equation}
for any continua $E, F\subset D$ with $E\cap
\partial U\ne\varnothing\ne E\cap \partial V$ and $F\cap \partial
U\ne\varnothing\ne F\cap \partial V.$ Observe that
\begin{equation}\label{eq10AA}
|\gamma^*_{1, m}|\cap \partial V\ne\varnothing, \quad |\gamma^*_{2,
m}|\cap
\partial V\ne\varnothing\,.\end{equation}
for sufficiently large $m\in {\Bbb N}.$ Indeed, $z_m\in
|\gamma^*_{1, m}|$ and $z^{\,\prime}_m\in |\gamma^*_{2, m}|,$ where
$z_m, z^{\,\prime}_m\rightarrow z_0\in V$ as $m\rightarrow\infty.$
Thus, $|\gamma^*_{1, m}|\cap V\ne\varnothing\ne |\gamma^*_{2,
m}|\cap V$ for sufficiently large $m\in {\Bbb N}.$ Besides that,
$h(V)\leqslant h(U)=2r_0<\delta/2$ and $|\gamma^*_{1, m}|\cap
(D\setminus V)\ne\varnothing$ because $h(|\gamma^*_{1,
m}|)>\delta/2$ by~(\ref{eq14A}). Then $|\gamma^*_{1, m}|\cap\partial
V\ne\varnothing$ (see~\cite[Theorem~1.I.5.46]{Ku}). Similarly,
$h(V)\leqslant h(U)=2r_0<\delta/2.$ Now, since by~(\ref{eq14A})
$h(|\gamma^*_{2, m}|)>\delta/2,$ we obtain that $|\gamma^*_{2,
m}|\cap (D\setminus V)\ne\varnothing.$
By~\cite[Theorem~1.I.5.46]{Ku}, $|\gamma^*_{1, m}|\cap\partial
V\ne\varnothing.$ Thus, (\ref{eq10AA}) is proved. By~(\ref{eq9AA}),
(\ref{eq8AA}) and (\ref{eq10AA}), we obtain that
\begin{equation}\label{eq6a}
M(\Gamma(|\gamma^*_{1, m}|, |\gamma^*_{2, m}|, D))>N\,.
\end{equation}
The inequality~(\ref{eq6a}) contradicts to~(\ref{eq15}), since
$\Gamma(|\gamma^*_{1, m}|, |\gamma^*_{2, m}|, D)\subset \Gamma_m$
and thus
$$M(\Gamma(|\gamma^*_{1, m}|, |\gamma^*_{2, m}|, D))
\leqslant M(\Gamma_m)\leqslant N\,.$$
The obtained contradiction indicates the incorrectness of the
assumption in~(\ref{eq12A}). The theorem is proved.~$\Box$
\end{proof}

\medskip
One of the versions of the following statement is established
in~\cite[item~v, Lemma~2]{SevSkv$_1$} for homeomorphisms and
''good'' boundaries, see also~\cite[Lemma~4.1]{SevSkv$_2$}. Let us
also point out the case relating to mappings with branching and bad
boundaries, see~\cite[Lemma~6.1]{SSD}, as well as the case of bad
boundaries and homeomorphisms, see~\cite[Lemma~2.13]{SSI}. We are
interested in the ''most general'' case when the mapping is only
open and discrete, and the mapped domain is regular.

\medskip
\begin{lemma}\label{lem3}
{\sl\, Let $n\geqslant 2,$ and let $D$ and $D^{\,\prime}$ be domains
in ${\Bbb R}^n.$ Assume that $D$ has a weakly flat boundary, none of
the components of which degenerates into a point, and $D^{\,\prime}$
is regular. Let $A$ be a non-degenerate continuum in $D^{\,\prime}$
and $\delta>0.$ Assume that $f_m$ is a sequence of open discrete and
closed mappings of $D$ onto $D^{\,\prime}$ satisfying the following
condition: for any $m=1,2,\ldots$ there is a continuum $A_m\subset
D,$ $m=1,2,\ldots ,$ such that $f_m(A_m)=A$ and $h(A_m)\geqslant
\delta>0.$ If there is $0<M_1<\infty$ such that $f_m$
satisfies~(\ref{eq2*A}) at any $y_0\in D^{\,\prime}$ and
$m=1,2,\ldots $ with some $Q=Q_m(y)$ for which $\Vert
Q_m\Vert_{L^1(D^{\,\prime})}\leqslant M_1,$ then there exists
$\delta_1>0$ such that
$$h(A_m,
\partial D)>\delta_1>0\quad \forall\,\, m\in {\Bbb
N}\,.$$
}
\end{lemma}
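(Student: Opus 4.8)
The plan is to argue by contradiction, assuming that no such $\delta_1>0$ exists. This means that for any candidate lower bound, some continuum $A_m$ approaches $\partial D$ arbitrarily closely; passing to a subsequence (which I will not relabel), I may assume that there are points $a_m\in A_m$ with $h(a_m,\partial D)\to 0$ as $m\to\infty$. Since $\overline{\mathbb R}^n$ is compact, I may further assume $a_m\to x_0$ for some $x_0\in\partial D$. The hypothesis $h(A_m)\geqslant\delta>0$ guarantees that each $A_m$ is a genuine continuum of chordal diameter bounded below, so it cannot collapse to the single point $x_0$; hence there must also be points of $A_m$ staying a definite chordal distance away from $x_0$. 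This tension between the diameter lower bound and the approach to the boundary is what I will exploit.

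\textbf{Locating two well-separated pieces and building a large modulus.}
Because $h(A_m)\geqslant\delta$, I can choose a second point $b_m\in A_m$ with $h(a_m,b_m)\geqslant\delta/2$ for all large $m$; passing again to a subsequence I may assume $b_m\to y_0$ with $h(x_0,y_0)\geqslant\delta/2$, so $y_0\neq x_0$. Now fix a small chordal ball $U=B_h(x_0,r_0)$ with $r_0<\delta/8$, so that $a_m\in U$ eventually while $b_m\notin \overline U$. Since each $A_m$ is a continuum meeting both $U$ and $D\setminus U$, it meets $\partial U$ by the standard connectedness fact \cite[Theorem~1.I.5.46]{Ku}. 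The hypothesis that $D$ has a weakly flat boundary none of whose components degenerates to a point is exactly what lets me produce a second continuum near $x_0$ reaching out to a prescribed neighborhood: choosing a neighborhood $V\subset U$ of $x_0$ adapted to the weak flatness, I obtain for any prescribed number $P$ a continuum $F_m\subset D$ (coming from the nondegenerate boundary component through $x_0$) and the piece of $A_m$ inside $\overline U$, both meeting $\partial U$ and $\partial V$, so that
$$
M(\Gamma(A_m,F_m,D))>P
$$
for all large $m$, by the definition of weakly flat boundary.

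\textbf{Bounding the same modulus from above via the inverse Poletsky inequality.}
On the other hand, the families $A=f_m(A_m)$ and $f_m(F_m)$ both sit inside the regular target $D^{\,\prime}$, with $A$ a fixed nondegenerate continuum. Applying the inverse modulus inequality \eqref{eq2*A} for $f_m$ with the uniform bound $\Vert Q_m\Vert_{L^1(D^{\,\prime})}\leqslant M_1$, and arguing exactly as in the proof of Lemma~\ref{lem4A} — covering the fixed image continuum $A$ by finitely many chordal balls and summing the modulus estimates $M(\Gamma_{f_m}(z_i,\varepsilon_0/4,\varepsilon_0/2))\leqslant 4^n M_1/\varepsilon_0^n$ — I obtain a finite bound $N=N(A,M_1,D^{\,\prime})$, independent of $m$, for the modulus of the family $\Gamma_m$ of paths in $D$ whose $f_m$-images join $A$ and $f_m(F_m)$. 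Since $\Gamma(A_m,F_m,D)\subset\Gamma_m$ up to the usual minorization, this gives $M(\Gamma(A_m,F_m,D))\leqslant N$ for all large $m$.

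\textbf{The contradiction and the main obstacle.}
Taking $P:=N$ in the weak-flatness step contradicts the uniform upper bound, and this contradiction refutes the assumption that $\delta_1$ fails to exist; hence $h(A_m,\partial D)>\delta_1>0$ for all $m$, as required. The main obstacle I anticipate is the careful extraction of the second continuum $F_m$ near $x_0$: one must use that the boundary component of $D$ through $x_0$ does not degenerate to a point in order to guarantee a genuine continuum reaching from $\partial U$ to $\partial V$ on which to apply weak flatness, and one must verify that the finite covering of the fixed image continuum $A$ yields a bound that is genuinely uniform in $m$ (which it is, since $A$ and $M_1$ are fixed). Everything else reduces to the connectedness lemma \cite[Theorem~1.I.5.46]{Ku} and the bookkeeping already carried out in Lemma~\ref{lem4A}.
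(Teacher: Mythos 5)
Your overall architecture is the same as the paper's: argue by contradiction, produce near the offending boundary point a second continuum $F_m\subset D$ with chordal diameter bounded below, get $M(\Gamma(A_m,F_m,D))\to\infty$ from weak flatness, and contradict a uniform upper bound obtained by covering the fixed image continuum $A$ with finitely many balls and applying~(\ref{eq2*A}). But there is a genuine gap in the upper-bound step. The covering argument only yields $\Gamma(A_m,F_m,D)>\bigcup_i\Gamma_{f_m}(z_i,\varepsilon_0/4,\varepsilon_0/2)$ if every path in $D^{\,\prime}$ joining $A$ to $f_m(F_m)$ is forced to cross one of the spherical annuli $A(z_i,\varepsilon_0/4,\varepsilon_0/2)$, and that requires a \emph{uniform in $m$} lower bound ${\rm dist}\,(f_m(F_m),A)\geqslant\varepsilon_0/2>0$. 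Nothing in your construction provides this: $F_m$ is a continuum in $D$ near $\partial D$, but $f_m$ is merely open, discrete and closed, so a priori $f_m(F_m)$ can meet $A$ (indeed $F_m$ could even intersect $A_m$, whence $f_m(F_m)\cap A\ne\varnothing$), in which case the family of images joining $A$ and $f_m(F_m)$ contains arbitrarily short paths crossing no annulus and the bound collapses. You cannot import this separation ``exactly as in Lemma~\ref{lem4A}'': there the two continua were produced by Lemma~\ref{lem1A} with disjointness and a positive distance $\varepsilon_0$ built in from the prime-end structure, which is precisely what is absent here.

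Supplying the missing separation is in fact the bulk of the paper's proof. One first invokes Theorem~\ref{th3} to extend each $f_{m_k}$ continuously to $\overline{D}$ with values in $\overline{D^{\,\prime}}_P$; uniform continuity of $\overline{f}_{m_k}$ on the compact $\overline{D}$ gives an $m$-dependent radius $\delta_k<1/k$ such that points of $D$ within $\delta_k$ of the boundary component $K_0$ are mapped $\varepsilon$-close (in the prime-end metric $\rho$) to $\partial D^{\,\prime}$, with $\varepsilon<(1/2)\,{\rm dist}\,(\partial D_0,g^{\,-1}(A))$; the continuum $F_m=|\gamma_k|$ is then built inside an $m$-dependent connected neighborhood $U_k\cap D$ of $K_0$ of size $\delta_k$ (via~\cite[Lemma~2.2]{HK}), which forces $\rho(f_{m_k}(|\gamma_k|),A)>\varepsilon$ and, after a further compactness argument, ${\rm dist}\,(f_{m_k}(|\gamma_k|),A)>\varepsilon_1>0$. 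Your fixed neighborhoods $U,V$ of $x_0$, chosen independently of $m$, cannot do this, because the modulus of continuity of $\overline{f}_m$ degenerates with $m$. A secondary, fixable point: your radius $r_0<\delta/8$ must also be taken smaller than a quarter of the chordal diameter of the (nondegenerate) boundary component through the limit point, or else the approximating continuum $F_m$ need not reach $\partial U$ at all; this is where the hypothesis that no boundary component degenerates to a point actually enters quantitatively.
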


\begin{proof}
Due to the compactness of the space~$\overline{{\Bbb R}^n}$ the
boundary of the domain $D$ is not empty and is compact, so that the
distance~$\overline{{\Bbb R}^n}$$h(A_m,
\partial D)$ is well-defined.

\medskip
We will prove from the opposite. Suppose that the conclusion of the
lemma is not true. Then for each $k\in{\Bbb N}$ there is a number
$m=m_k$ such that $h(A_{m_k},
\partial D)<1/k.$ We may
assume that the sequence $m_k$ is increasing by $k.$ Since $A_{m_k}$
is compact, there are $x_k\in A_{m_k}$ і $y_k\in
\partial D$ such that $h(A_{m_k},
\partial D)=h(x_k, y_k)<1/k$ (see Figure~\ref{fig3A}).
\begin{figure}[h]
\centerline{\includegraphics[scale=0.6]{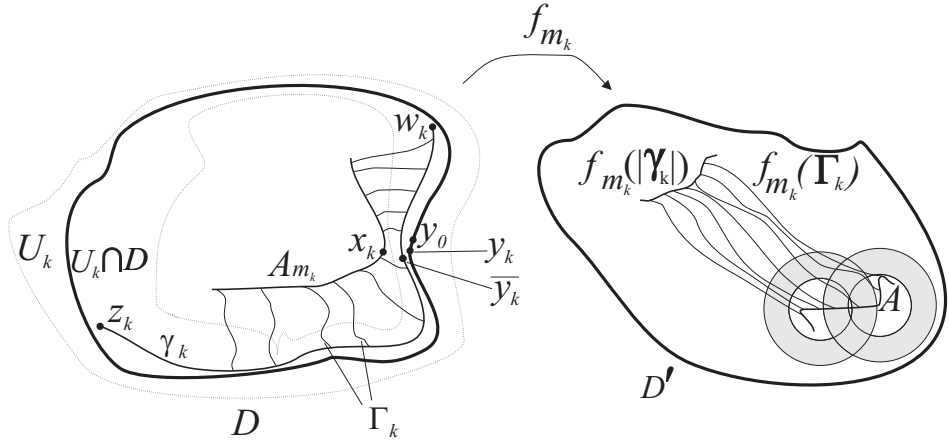}} \caption{To
proof of Lemma~\ref{lem3}}\label{fig3A}
\end{figure}
Since $\partial D$ is a compact set, we may consider that
$y_k\rightarrow y_0\in
\partial D$ as $k\rightarrow \infty.$ Now we also have that
%
$x_k\rightarrow y_0\in \partial D$ as $k\rightarrow \infty.$
%
Let $K_0$ be a component of $\partial D$ containing $y_0.$
Obviously, $K_0$ is a continuum in $\overline{{\Bbb R}^n}.$ Since
$\partial D$ is weakly flat, by Theorem~\ref{th3} $f_{m_k}$ has a
continuous extension $\overline{f}_{m_k}:\overline{D}\rightarrow
\overline{D^{\,\prime}}_P.$ Moreover, the mapping
$\overline{f}_{m_k}$ is uniformly continuous in $\overline{D}$ for
any fixed $k,$ because $\overline{f}_{m_k}$ is continuous on the
compact set $\overline{D}.$ Now, for any $\varepsilon>0$ there is
$\delta_k=\delta_k(\varepsilon)<1/k$ such that
\begin{equation}\label{eq3B}
\rho(\overline{f}_{m_k}(x),\overline{f}_{m_k}(x_0))<\varepsilon
\end{equation}
$$
\forall\,\, x,x_0\in \overline{D},\quad h(x, x_0)<\delta_k\,, \quad
\delta_k<1/k\,,$$
where $\rho$ is one of the possible metric
in~$\overline{D^{\,\prime}}_P,$
\begin{equation}\label{eq1AA}
\rho(x, y):=|g^{\,-1}(x)-g^{\,-1}(y)|\,,
\end{equation}
where the element $g^{\,-1}(x)$ is understood as some (single) point
of the boundary $D_0$ for $x\in E_{D^{\,\prime}},$ which is
well-defined due to~\cite[Theorem~2.1]{IS}; see
also~\cite[теорема~4.1]{Na$_2$}.
Let $\varepsilon>0$ be some number such that
\begin{equation}\label{eq5D}
\varepsilon<(1/2)\cdot {\rm dist}\,(\partial D_0, g^{\,-1}(A))\,,
\end{equation}
where $A$ is a continuum from the conditions of lemma and
$g:D_0\rightarrow D^{\,\prime}$ is a quasiconformal mapping of $D_0$
onto $D,$ while $D$ is a domain with a quasiconformal boundary
corresponding to the definition of the metric $\rho.$
Put $B_h(x_0, r)=\{x\in \overline{{\Bbb R}^n}: h(x, x_0)<r\}.$ Given
$k\in {\Bbb N},$ we set
$$B_k:=\bigcup\limits_{x_0\in K_0}B_h(x_0, \delta_k)\,,\quad k\in {\Bbb
N}\,.$$
Since $B_k$ is a neighborhood of a continuum $K_0,$
by~\cite[Lemma~2.2]{HK} there is a neighborhood $U_k$ of $K_0$ such
that $U_k\subset B_k$ and $U_k\cap D$ is connected. We may consider
that $U_k$ is open, so that $U_k\cap D$ is linearly path connected
(see~\cite[Proposition~13.1]{MRSY}). Let $h(K_0)=m_0.$ Then we may
find $z_0, w_0\in K_0$ such that $h(K_0)=h(z_0, w_0)=m_0.$ Thus,
there are sequences $\overline{y_k}\in U_k\cap D,$ $z_k\in U_k\cap
D$ and $w_k\in U_k\cap D$ such that $z_k\rightarrow z_0,$
$\overline{y_k}\rightarrow y_0$ and $w_k\rightarrow w_0$ as
$k\rightarrow\infty.$ We may consider that
\begin{equation}\label{eq2B}
h(z_k, w_k)>m_0/2\quad \forall\,\, k\in {\Bbb N}\,.
\end{equation}
Since the set $U_k\cap D$ is linearly connected, we may joint the
points $z_k,$ $\overline{y_k}$ and $w_k$ using some path
$\gamma_k\in U_k\cap D.$ As usually, we denote by $|\gamma_k|$ the
locus of the path $\gamma_k$ in $D.$ Then $f_{m_k}(|\gamma_k|)$ is a
compact set in $D^{\,\prime}.$ If $x\in|\gamma_k|,$ then we may find
$x_0\in K_0$ such that $x\in B(x_0, \delta_k).$ Fix  $\omega\in
A\subset D.$ Since $x\in|\gamma_k|$ and, in addition, $x$ is an
inner point of $D,$ we may use the notation $f_{m_k}(x)$ instead
$\overline{f}_{m_k}(x).$ By~(\ref{eq3B}) and~(\ref{eq5D}), and by
the triangle inequality, we obtain that
$$\rho(f_{m_k}(x),\omega)\geqslant
\rho(\omega,
\overline{f}_{m_k}(x_0))-\rho(\overline{f}_{m_k}(x_0),f_{m_k}(x))\geqslant$$
\begin{equation}\label{eq4C}
\geqslant {\rm dist}\,(\partial D_0, g^{\,-1}(A))-(1/2)\cdot {\rm
dist}\,(\partial D_0, g^{\,-1}(A))=(1/2)\cdot {\rm dist}\,(\partial
D_0, g^{\,-1}(A))>\varepsilon
\end{equation}
for sufficiently large $k\in {\Bbb N}.$ Passing to $\inf$
in~(\ref{eq4C}) over all $x\in |\gamma_k|$ and $\omega\in A,$ we
obtain that
\begin{equation}\label{eq18}
\rho(f_{m_k}(|\gamma_k|), A)>\varepsilon, \quad k=1,2,\ldots .
\end{equation}
We now show that there is $\varepsilon_1>0$ such that
\begin{equation}\label{eq6B}
{\rm dist}\,(f_{m_k}(|\gamma_k|), A)>\varepsilon_1, \quad\forall\,\,
k=1,2,\ldots \,.
\end{equation}
Indeed, let~(\ref{eq6B}) be violated. Then for the number
$\varepsilon_l=1/l,$ $l=1,2,\ldots$ there are $\xi_l\in
|\gamma_{k_l}|$ and $\zeta_l\in A$ such that
\begin{equation}\label{eq21}
|f_{m_{k_l}}(\xi_l)-\zeta_l|<1/l\,,\quad l=1,2,\ldots \,.
\end{equation}
Without loss of generality, we may assume that the sequence of
numbers $k_l,$ $l=1,2,\ldots,$ is increasing. Since $A$ is a compact
set, we may consider that the sequence $\zeta_l$ converge to some
$\zeta_0\in A$ as $l\rightarrow\infty.$ By the triangle inequality
and from~(\ref{eq21}) it follows that
\begin{equation}\label{eq22}
|f_{m_{k_l}}(\xi_l)-\zeta_0|\rightarrow 0\,,\quad
l\rightarrow\infty\,.
\end{equation}
On the other hand, recall that $\rho(f_{m_k}(x),
\omega)=|g^{\,-1}(f_{m_k}(x))-g^{\,-1}(\omega)|,$ where
$g:D_0\rightarrow D^{\,\prime}$  is a quasiconformal mapping of $
D_0$ onto $D^{\,\prime}$ (see~(\ref{eq1AA})). In particular, $g^{\,-
1}$ is a continuous mapping in $D^{\,\prime}.$ Thus, by the triangle
inequality and~(\ref{eq22}) we obtain that
$$|g^{\,-1}(f_{m_{k_l}}(\xi_l))-g^{\,-1}(\zeta_l)|\leqslant$$
\begin{equation}\label{eq23}
\leqslant
|g^{\,-1}(f_{m_{k_l}}(\xi_l))-g^{\,-1}(\zeta_0)|+|g^{\,-1}(\zeta_0)-g^{\,-1}(\zeta_l)|\rightarrow
0,\quad l\rightarrow\infty\,.\end{equation}
However, by the definition of $\rho$ and by~(\ref{eq23}) we obtain
that
$$\rho(f_{m_{k_l}}(|\gamma_{k_l}|), A)\leqslant \rho(f_{m_{k_l}}(\xi_l), \zeta_l)=
|g^{\,-1}(f_{m_{k_l}}(\xi_l))-g^{\,-1}(\zeta_l)|\rightarrow 0, \quad
l\rightarrow\infty\,,$$
that contradicts to~(\ref{eq18}). The resulting contradiction
indicates the validity of relation~(\ref{eq6B}).

\medskip
We cover the set $A$ with balls $B(x, \varepsilon/4),$ $x\in A.$
Since $A$ is compact, we may assume that $A\subset
\bigcup\limits_{i=1}^{M_0}B(x_i, \varepsilon/4),$ $x_i\in A,$
$i=1,2,\ldots, M_0,$ $1\leqslant M_0<\infty.$ By the definition,
$M_0$ depends only on $A,$ in particular, $M_0$ does not depend on
$k.$ Put
$$
\Gamma_k:=\Gamma(A_{m_k}, |\gamma_k|, D)\,.
$$
Let $\Gamma_{ki}:=\Gamma_{f_{m_k}}(x_i, \varepsilon/4,
\varepsilon/2),$ in other words, $\Gamma_{ki}$ consists of all paths
$\gamma:[0, 1]\rightarrow D$ such that $f_{m_k}(\gamma(0))\in S(x_i,
\varepsilon/4),$ $f_{m_k}(\gamma(1))\in S(x_i, \varepsilon/2)$ і
$\gamma(t)\in A(x_i, \varepsilon/4, \varepsilon/2)$ for $0<t<1.$ We
show that
\begin{equation}\label{eq6C}
\Gamma_k>\bigcup\limits_{i=1}^{M_0}\Gamma_{ki}\,.
\end{equation}
Indeed, let $\widetilde{\gamma}\in \Gamma_k,$ in other words,
$\widetilde{\gamma}:[0, 1]\rightarrow D,$ $\widetilde{\gamma}(0)\in
A_{m_k},$ $\widetilde{\gamma}(1)\in |\gamma_k|$ and
$\widetilde{\gamma}(t)\in D$ for $0\leqslant t\leqslant 1.$ Then
$\gamma^{\,*}:=f_{m_k}(\widetilde{\gamma})\in \Gamma(A,
f_{m_k}(|\gamma_k|), D^{\,\prime}).$ Since the balls $B(x_i,
\varepsilon/4),$ $1\leqslant i\leqslant M_0,$  form the coverage of
the compact set $A,$ we may find $i\in {\Bbb N}$ such that
$\gamma^{\,*}(0)\in B(x_i, \varepsilon/4)$ and $\gamma^{\,*}(1)\in
f_{m_k}(|\gamma_k|).$ By the relation~(\ref{eq6B}),
$|\gamma^{\,*}|\cap B(x_i, \varepsilon/4)\ne\varnothing\ne
|\gamma^{\,*}|\cap (D^{\,\prime}\setminus B(x_i, \varepsilon/4)).$
Thus, by~\cite[Theorem~1.I.5.46]{Ku} there is $0<t_1<1$ such that
$\gamma^{\,*}(t_1)\in S(x_i, \varepsilon/4).$ We may assume that
$\gamma^{\,*}(t)\not\in B(x_i, \varepsilon/4)$ for $t>t_1.$ Set
$\gamma_1:=\gamma^{\,*}|_{[t_1, 1]}.$ By~(\ref{eq6B}) it follows
that $|\gamma_1|\cap B(x_i, \varepsilon/2)\ne\varnothing\ne
|\gamma_1|\cap (D\setminus B(x_i, \varepsilon/2)).$ Thus,
by~\cite[Theorem~1.I.5.46]{Ku} there is $t_1<t_2<1$ such that
$\gamma^{\,*}(t_2)\in S(x_i, \varepsilon/2).$ We may assume that
$\gamma^{\,*}(t)\in B(x_i, \varepsilon/2)$ for any $t<t_2.$ Putting
$\gamma_2:=\gamma^{\,*}|_{[t_1, t_2]},$ we observe that a path
$\gamma_2$ is a subpath of $\gamma^{\,*},$ which belongs to
$\Gamma(S(x_i, \varepsilon/4), S(x_i, \varepsilon/2), A(x_i,
\varepsilon/4, \varepsilon/2)).$

Finally, $\widetilde{\gamma}$ has a subpath
$\widetilde{\gamma_2}:=\widetilde{\gamma}|_{[t_1, t_2]}$ such that
$f_{m_k}\circ\widetilde{\gamma_2}=\gamma_2,$ while
$$\gamma_2\in \Gamma(S(x_i, \varepsilon/4), S(x_i, \varepsilon/2),
A(x_i, \varepsilon/4, \varepsilon/2))\,.$$ Thus, the
relation~(\ref{eq6C}) is proved. Set
$$\eta(t)= \left\{
\begin{array}{rr}
4/\varepsilon, & t\in [\varepsilon/4, \varepsilon/2],\\
0,  &  t\not\in [\varepsilon/4, \varepsilon/2]\,.
\end{array}
\right. $$
Observe that $\eta$ satisfies the relation~(\ref{eqA2}) for
$r_1=\varepsilon/4$ and $r_2=\varepsilon/2.$ Since $f_{m_k}$
satisfies the relation~(\ref{eq2*A}), we obtain that
\begin{equation}\label{eq8C}
M(\Gamma_{f_{m_k}}(x_i, \varepsilon/4, \varepsilon/2))\leqslant
(4/\varepsilon)^n\cdot\Vert Q\Vert_1<M_0<\infty\,.
\end{equation}
By~(\ref{eq6C}) and (\ref{eq8C}) and due to the subadditivity of the
modulus of families of paths, we obtain that
\begin{equation}\label{eq4B}
M(\Gamma_k)\leqslant
\frac{4^nM_0}{\varepsilon^n}\int\limits_{D^{\,\prime}}Q(y)\,dm(y)\leqslant
M_1\cdot M_0<\infty\,.
\end{equation}
Arguing similarly to the proof of relations~(\ref{eq14A}) and using
the condition~(\ref{eq2B}), we obtain that
$M(\Gamma_k)\rightarrow\infty$ as $k\rightarrow\infty,$ which
contradicts to~(\ref{eq4B}). The resulting contradiction proves the
lemma.
\end{proof}

\medskip
Given domains $D, D^{\,\prime}\subset {\Bbb R}^n,$ points $a\in D,$
$b\in D^{\,\prime}$ and a number $M_0>0$ denote by ${\frak S}_{a, b,
M_0}(D, D^{\,\prime})$ the family of open discrete and closed
mappings $f$ of $D$ onto $D^{\,\prime}$ satisfying the
relation~(\ref{eq2*A}) for some $Q=Q_f,$ $\Vert
Q\Vert_{L^1(D^{\,\prime})}\leqslant M_0$ for any $y_0\in f(D),$ such
that $f(a)=b.$ The following statement was proved
in~\cite[Theorem~7.1]{SSD} in the case of a fixed function~$Q$ (cf.
Theorem~4.2 in~\cite{SD}).

\medskip
\begin{theorem}\label{th4}
{\sl Assume that $D$ has a weakly flat boundary, none of the
components of which degenerates into a point, and $D^{\,\prime}$ is
regular. Then any $f\in {\frak S}_{a, b, M_0}(D, D^{\,\prime})$ has
a continuous extension $\overline{f}:\overline{D}\rightarrow
\overline{D^{\,\prime}}_P,$ while
$\overline{f}(\overline{D})=\overline{D^{\,\prime}}_P$ and, in
addition, the family ${\frak S}_{a, b, M_0}(\overline{D},
\overline{D^{\,\prime}})$ of all extended mappings
$\overline{f}:\overline{D}\rightarrow \overline{D^{\,\prime}}_P$ is
equicontinuous in $\overline{D}.$ }
\end{theorem}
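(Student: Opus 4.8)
The plan is to reduce Theorem~\ref{th4} to the machinery already assembled in this section, since the family ${\frak S}_{a, b, M_0}(D, D^{\,\prime})$ is essentially the family ${\frak S}_{\delta, A, M}(D, D^{\,\prime})$ with the normalization $f(a)=b$ playing the role of the continuum constraint. First I would establish the continuous boundary extension: since each $f\in{\frak S}_{a, b, M_0}(D, D^{\,\prime})$ is open, discrete, and closed, satisfies the inverse Poletsky inequality~(\ref{eq2*A}) at every $y_0\in D^{\,\prime}$ with $\Vert Q_f\Vert_{L^1(D^{\,\prime})}\leqslant M_0<\infty$, and $D$ has weakly flat boundary while $D^{\,\prime}$ is regular, Theorem~\ref{th3} together with Corollary~\ref{cor2} (applied with $Q=Q_f\in L^1(D^{\,\prime})$) yields a continuous extension $\overline{f}:\overline{D}\rightarrow\overline{D^{\,\prime}}_P$ with $\overline{f}(\overline{D})=\overline{D^{\,\prime}}_P$. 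Equicontinuity at interior points of $D$ follows from~\cite[Theorem~1.1]{SSD} exactly as in the proof of Theorem~\ref{th2}.

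The crux is equicontinuity on $\partial D$, and here the strategy is to mimic the contradiction argument in the proof of Theorem~\ref{th2}, with the key extra input being Lemma~\ref{lem3}. Assuming the contrary, I would produce $z_0\in\partial D$, $\varepsilon_0>0$, a sequence $z_m\rightarrow z_0$, and mappings $\overline{f}_m$ with $\rho(\overline{f}_m(z_m),\overline{f}_m(z_0))\geqslant\varepsilon_0$; using uniform continuity of each $\overline{f}_m$ on the compact $\overline{D}$, I may pass to interior points and obtain a companion sequence $z_m^{\,\prime}\rightarrow z_0$ with $\rho(f_m(z_m), f_m(z_m^{\,\prime}))\geqslant\varepsilon_0/2$. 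Compactness of $\overline{D^{\,\prime}}_P$ (from regularity of $D^{\,\prime}$) lets me assume $f_m(z_m)\rightarrow P_1$ and $f_m(z_m^{\,\prime})\rightarrow P_2$ with $P_1\ne P_2$. Taking a fixed non-degenerate continuum $A\subset D^{\,\prime}$ through $b=f(a)$ (so that $a\in f_m^{-1}(A)$ for every $m$), I apply Lemmas~\ref{lem1A} and~\ref{lem4A} to build disjoint connecting paths $\gamma_{1,m}, \gamma_{2,m}$ in $D^{\,\prime}$ and a uniform bound $M(\Gamma_m)\leqslant N$ for the family $\Gamma_m$ of paths $\gamma$ in $D$ whose image lands in $\Gamma(|\gamma_{1,m}|,|\gamma_{2,m}|, D^{\,\prime})$.

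The main obstacle, and the reason Lemma~\ref{lem3} is indispensable, is controlling the total $f_m$-liftings $\gamma^*_{1,m}, \gamma^*_{2,m}$ of these paths: their terminal points lie in $f_m^{-1}(A)$, and to run the weakly-flat-boundary argument I need the preimage continua to stay a fixed chordal distance away from $\partial D$ uniformly in $m$. Unlike Theorem~\ref{th2}, where $h(f^{-1}(A),\partial D)\geqslant\delta$ is built into the class, here only $f(a)=b$ is prescribed, so I must invoke Lemma~\ref{lem3} to the sequence $f_m$ with the continua $A_m:=$ (a fixed continuum in $D$ mapping onto $A$, of chordal diameter bounded below) to extract $\delta_1>0$ with $h(A_m,\partial D)>\delta_1$ for all $m$. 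This furnishes the lower bound $h(|\gamma^*_{i,m}|)\geqslant\delta_1/2$ on the lifted continua, precisely as in~(\ref{eq14A}). From there the argument closes exactly as in Theorem~\ref{th2}: choosing balls $V\subset U$ around $z_0$ of small chordal radius, the weak flatness of $\partial D$ forces $M(\Gamma(|\gamma^*_{1,m}|,|\gamma^*_{2,m}|, D))>N$ for large $m$, while $\Gamma(|\gamma^*_{1,m}|,|\gamma^*_{2,m}|, D)\subset\Gamma_m$ gives $M\leqslant N$, a contradiction. This refutes the failure of equicontinuity and completes the proof.
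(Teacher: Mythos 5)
Your overall strategy coincides with the paper's: extension and surjectivity via Theorem~\ref{th3} and Corollary~\ref{cor2}, interior equicontinuity via~\cite[Theorem~1.1]{SSD}, and boundary equicontinuity by contradiction using a lifted continuum $A_m$ (the total $f_m$-lifting, starting at $a$, of a path $\alpha$ joining $b$ to some $y_0\ne b$) together with Lemma~\ref{lem3}. The one place where your argument as written would not go through is the application of Lemma~\ref{lem3} itself: that lemma requires the hypothesis $h(A_m)\geqslant\delta>0$ uniformly in $m$, and you simply assert that the continua $A_m$ are ``of chordal diameter bounded below.'' This is not automatic --- the liftings $A_m$ depend on $f_m$ and nothing prevents $h(A_m)\rightarrow 0$. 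The paper closes this hole with an explicit dichotomy: either $h(A_m)\rightarrow 0$, in which case the continua $A_m\ni a$ are eventually contained in a small neighborhood of the interior point $a$, so $h(A_m,\partial D)\geqslant\delta>0$ holds trivially and no appeal to Lemma~\ref{lem3} is needed; or $h(A_{m_k})\geqslant\delta_0>0$ along a subsequence, and only then is Lemma~\ref{lem3} invoked to produce $h(A_{m_k},\partial D)\geqslant\delta_1>0$. In both cases the sequence lands in a class of the form ${\frak S}_{\delta, A, M_0}$, so the paper concludes by citing Theorem~\ref{th2} directly rather than re-running its proof inline as you propose; your unrolled version is logically equivalent but redundant. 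Add the dichotomy (or otherwise justify the diameter lower bound before invoking Lemma~\ref{lem3}) and your proof matches the paper's.
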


\medskip
\begin{proof} The possibility of continuous extension of
$f\in {\frak S}_{a, b, M_0}(D, D^{\,\prime})$ to a continuous
mapping $\overline{f}:\overline{D}\rightarrow
\overline{D^{\,\prime}}_P$ is a statement of Theorem~\ref{th3}, as
well as the equality
$\overline{f}(\overline{D})=\overline{D^{\,\prime}}_P.$ The
equicontinuity of ${\frak S}_{a, b, M_0}(D, D^{\,\prime})$ at inner
points of $D$ is a result of~\cite[Theorem~1.1]{SSD}.

\medskip
It remains to establish the equicontinuity of the family of extended
mappings $\overline{f}:\overline{D}\rightarrow
\overline{D^{\,\prime}}_P$ at the boundary points of the domain~$D.$

\medskip
We prove this statement from the opposite. Assume that the family
${\frak S}_{a, b, M_0}(\overline{D}, \overline{D^{\,\prime}})$ is
not equicontinuous at some point $x_0\in\partial D.$ Then there are
points $x_m\in D$ and mappings $f_m\in {\frak S}_{a, b,
M_0}(\overline{D}, \overline{D^{\,\prime}}),$ $m=1,2,\ldots ,$ such
that $x_m\rightarrow x_0$ as $m\rightarrow\infty,$ moreover,
\begin{equation}\label{eq15B}
\rho(f_m(x_m), f_m(x_0))\geqslant\varepsilon_0\,,\quad
m=1,2,\ldots\,.
\end{equation}
for some $\varepsilon_0>0,$ where $\rho$ is one of the metrics in
$\overline{D^{\,\prime}}_P$ (see, e.g., \cite[Remark~2]{KR$_1$}). We
choose in an arbitrary way the point $y_0\in D^{\,\prime},$ $y_0\ne
b,$ and join it to the point $b$ by some path in $D^{\,\prime},$
which we denote by $\alpha.$ Let $A:=|\alpha|$ and let $A_m$ be a
total $f_m$-lifting of $\alpha$ starting at $a$ (it exists
by~\cite[Lemma~3.7]{Vu}). Observe that $h(A_m,
\partial D)>0$ due to the closeness of~$f_m.$ Now, the following two cases are possible:
either $h(A_m)\rightarrow 0$ as $m\rightarrow\infty,$ or
$h(A_{m_k})\geqslant\delta_0>0$ as $k\rightarrow\infty$ for some
increasing sequence of numbers $m_k$ and some $\delta_0>0.$

\medskip
In the first of these cases, obviously, $h(A_m, \partial D)\geqslant
\delta>0$ for some $\delta>0.$ Then, by Theorem~\ref{th2}, the
family $\{f_m\}_{m=1}^{\infty}$ is equicontinuous at the point
$x_0,$ however, this contradicts the condition~(\ref{eq15B}).

In the second case, if $h(f(A_{m_k}))\geqslant\delta_0>0$ for
sufficiently large $k,$ we also have that $f(A_{m_k}, \partial
D)\geqslant \delta_1>0$ for some $\delta_1> 0$ by Lemma~\ref{lem3}.
Again, by Theorem~\ref{th2}, the family $\{f_{m_k}\}_{k=1}^{\infty}
$ is equicontinuous at the point $x_0,$ and this contradicts the
condition~(\ref{eq15B}).

Thus, in both of the two possible cases, we came to a
contradiction~(\ref{eq15B}), and this indicates the incorrect
assumption of the absence of the equicontinuity of the family
${\frak S}_{a, b, M_0}(D, D^{\,\prime})$ in $\overline{D}.$ The
theorem is proved.~$\Box$
\end{proof}

\section{Compactness of families of solutions of the Dirichlet
problem}

{\it Proof of Theorem~\ref{th2A}.} In general, we will use the
scheme of proving Theorem~1.2 in~\cite{SD}.

\medskip
{\textbf I.} Let $f_m\in \frak{F}^{\mathcal{M}}_{\varphi, \Phi,
z_0}(D),$ $m=1,2,\ldots .$ By Stoilow's factorization theorem (see,
e.g., \cite[5(III).V]{St}) a mapping $f_m$ has a representation
\begin{equation}\label{eq2E}
f_m=\varphi_m\circ g_m\,,
\end{equation}
where $g_m$ is some homeomorphism, and $\varphi_m$ is some analytic
function. By Lemma~1 in~\cite{Sev$_2$}, the mapping $g_m$ belongs to
the Sobolev class $W_{\rm loc}^{1, 1}(D)$ and has a finite
distortion. Moreover, by~\cite[(1).C, Ch.~I]{A}
\begin{equation}\label{eq1B}
{f_m}_z={\varphi_m}_z(g_m(z)){g_m}_z,\qquad
{f_m}_{{\overline{z}}}={\varphi_m}_z(g_m(z)){g_m}_{\overline{z}}
\end{equation}
for almost all $z\in D.$ Therefore, by the relation~(\ref{eq1B}),
$J(z, g_m)\ne 0$ for almost all $z\in D,$ in addition,
$K_{\mu_{f_m}}(z)=K_{\mu_{g_m}}(z).$

\medskip
\textbf{II.} We prove that $\partial g_m (D)$ contains at least two
points. Suppose the contrary. Then either $g_m(D)={\Bbb C},$ or
$g_m(D)={\Bbb C}\setminus \{a\},$ where $a\in {\Bbb C}.$ Consider
first the case $g_m(D)={\Bbb C}.$ By Picard's theorem
$\varphi_m(g_m(D))$ is the whole plane, except perhaps one point
$\omega_0\in {\Bbb C}.$ On the other hand, for every $m=1,2,\ldots$
the function $u_m(z):={\rm Re}\,f_m(z)={\rm
Re}\,(\varphi_m(g_m(z)))$ is continuous on the compact set
$\overline{D}$ under the condition~(\ref{eq1A}) by the continuity
of~$\varphi.$ Therefore, there exists $C_m>0$ such that $|{\rm
Re}\,f_m(z)|\leqslant C_m$ for any $z\in D,$ but this contradicts
the fact that $\varphi_m(g_m(D))$ contains all points of the complex
plane except, perhaps, one. The situation $g_m(D)={\Bbb C}\setminus
\{a\},$ $a\in {\Bbb C},$ is also impossible, since the domain
$g_m(D)$ must be simply connected in ${\Bbb C}$ as a homeomorphic
image of the simply connected domain $D.$

\medskip
Therefore, the boundary of the domain $g_m(D)$ contains at least two
points. Then, according to Riemann's mapping theorem, we may
transform the domain $g_m(D)$ onto the unit disk ${\Bbb D}$ using
the conformal mapping $\psi_m.$ Let $z_0\in D $ be a point from the
condition of the theorem. By using an auxiliary conformal mapping
$$\widetilde{\psi_m}(z)=\frac{z-(\psi_m\circ
g_m)(z_0)}{1-z\overline{(\psi_m\circ g_m)(z_0)}}$$ of the unit disk
onto itself we may consider that $(\psi_m\circ g_m)(z_0)=0.$ Now,
by~(\ref{eq2E}) we obtain that
$$
f_m=\varphi_m\circ g_m= \varphi_m\circ\psi^{\,-1}_m\circ\psi_m\circ
g_m=F_m\circ G_m\,,\quad m=1,2,\ldots\,,
$$
where $F_m:=\varphi_m\circ\psi^{\,-1}_m,$ $F_m:{\Bbb D}\rightarrow
{\Bbb C},$ and $G_m=\psi_m\circ g_m.$
Obviously, a function $F_m$ is analytic, and $G_m$ is a regular
Sobolev homeomorphism in $D.$ In particular, ${\rm Im}\,F_m(0)=0$
for any $m\in {\Bbb N}.$

\medskip
\textbf{III.} We prove that the $L^1$-norms of the functions
$K_{\mu_{G_m}}(z)$ are bounded from above by some universal positive
constant $C> 0$ over all $m=1,2,\ldots .$ Indeed, by the convexity
of the function $\Phi$ in~(\ref{eq1D}) and by~\cite[Proposition~5,
I.4.3]{Bou},  the slope $\left[\Phi(t)-\Phi(0)\right]/t$ is a
non-decreasing function. Hence there exist constants $t_0>0$ and
$C_1>0$ such that
\begin{equation}\label{eq6D}
\Phi(t)\geqslant C_1\cdot t\qquad \forall\,\, t\in [t_0, \infty)\,.
\end{equation}
Fix $m\in {\Bbb N}\,.$ By~(\ref{eq1D}) and~(\ref{eq6D}), we obtain
that
$$\int\limits_D K_{\mu_{G_m}}(z)\,dm(z)=
\int\limits_{\{z\in D: K_{\mu_{G_m}}(z)<t_0\}}
K_{\mu_{G_m}}(z)\,dm(z)+\int\limits_{\{z\in D:
K_{\mu_{G_m}}(z)\geqslant t_0\}} K_{\mu_{G_m}}(z)\,dm(z)\leqslant$$
$$\leqslant t_0\cdot m(D)+\frac{1}{C_1}\int\limits_D
\Phi(K_{\mu_{G_m}}(z))\,dm(z)\leqslant$$$$\leqslant t_0\cdot
m(D)+\frac{\sup\limits_{z\in D}(1+|z|^2)^2}{C_1}\int\limits_D
\Phi(K_{\mu_{G_m}}(z))\cdot\frac{1}{(1+|z|^2)^2}\,dm(z)\leqslant$$
$$
\leqslant t_0\cdot m(D)+\frac{\sup\limits_{z\in
D}(1+|z|^2)^2}{C_1}\mathcal{M}(D)<\infty\,,
$$
because $\mathcal{M}(D)<\infty$ by the assumption of the theorem.

\medskip
\textbf{IV.} We prove that each map $G_m,$ $m=1,2,\ldots ,$ has a
continuous extension to $E_D,$ in addition, the family of extended
maps $\overline{G}_m,$ $m=1,2,\ldots ,$ is equicontinuous in
$\overline{D}_P.$ Indeed, as proved in item~\textbf{III},
$K_{\mu_{G_m}}\in L^1(D).$ By~\cite[Theorem~3]{KPRS} (see
also~\cite[Theorem~3.1]{LSS}) each $G_m,$ $m=1,2,\ldots, $ is a ring
$Q$-homeomorphism in $\overline{D}$ for $Q=K_{\mu_{G_m}}(z),$ where
$\mu$ is defined in~(\ref{eq2C}), and $K_{\mu}$ my be calculated by
the formula~(\ref{eq1}). Note that the unit disk ${\Bbb D}$ is a
uniform domain as a finitely connected flat domain at its boundary
with a finite number of boundary components (see, for example,
~\cite[Theorem~6.2 and Corollary~6.8]{Na$_1$}). Then it is desirable
the conclusion is a statement of Theorem~\ref{th1}.

\medskip
\textbf{V.} Let us prove that the inverse homeomorphisms
$G^{\,-1}_m,$ $m=1,2,\ldots ,$ have a continuous extension
$\overline{G}^{\,-1}_m$ to $\partial {\Bbb D}$ in terms of prime
ends in $D,$ and $\{\overline{G}_m^{\,-1}\}_{m=1}^{\infty}$ is
equicontinuous in $\overline{\Bbb D}$ as a family of mappings from
$\overline{\Bbb D}$ to $\overline{D}_P.$ Since by the
item~\textbf{IV} mappings $G_m,$ $m=1,2,\ldots, $ are ring
$K_{\mu_{G_m}}(z)$-homeomorphisms in $D,$ the corresponding inverse
mappings $G^{\,-1}_m$ satisfy~(\ref{eq2*A}) (in this case, $D$
corresponds the unit disk ${\Bbb D}$ in (\ref{eqA2}),  $f\mapsto
G_m,$ $Q\mapsto K_{\mu_{G_m}}(z),$ and $f(D)\mapsto D$). Since
$G^{\,-1}_m(0)=z_0$ for any $m=1,2,\ldots ,$ the possibility of a
continuous extension of $G^{\,-1}_m$ to $\partial {\Bbb D},$ and the
equicontinuity of ${\{\overline{G}_m^{\,-1}\}}_{m=1}^{\infty}$ in
terms of $G_m^{\,-1}:\overline{\Bbb D}\rightarrow \overline{D}_P$
follows by Theorem~\ref{th4}.

\medskip
\textbf{VI.} Since, as proved above the family
${\{G_m\}}^{\infty}_{m=1}$ is equicontinuous in~$D,$ according to
Arzela-Ascoli criterion there exists an increasing subsequence of
numbers $m_k,$ $k=1,2,\ldots ,$ such that $G_{m_k}$ converges
locally uniformly in $D$ to some continuous mapping $G:D\rightarrow
\overline{{\Bbb C}}$ as $k\rightarrow\infty$  (see, e.g.,
\cite[Theorem~20.4]{Va}). By~\cite[Lemma~2.1]{SD}, either $G$ is a
homeomorphism with values in ${\Bbb R}^n,$ or a constant
in~$\overline{{\Bbb R}^n}.$ Let us prove that the second case is
impossible. Let us apply the approach used in proof of the second
part of Theorem~21.9 in~\cite{Va}. Suppose the contrary: let
$G_{m_k}(x)\rightarrow c=const$ as $k\rightarrow\infty.$ Since
$G_{m_k}(z_0)=0$ for all $k=1,2,\ldots ,$ we have that $c=0.$ By
item~\textbf{V}, the family of mappings~$G^{\,-1}_m,$ $m=1,2,\ldots
,$ is equicontinuous in ${\Bbb D}.$ Then
$$h(z, G^{\,-1}_{m_k}(0))=h(G^{\,-1}_{m_k}(G_{m_k}(z)), G^{\,-1}_{m_k}(0))\rightarrow 0$$
as $k\rightarrow\infty,$ which is impossible because $z$ is an
arbitrary point of the domain $D.$ The obtained contradiction
refutes the assumption made above. Thus, $G:D\rightarrow {\Bbb C}$
is a homeomorphism.

\medskip
\textbf{VII.} According to~\textbf{V}, the family of mappings
$\{\overline{G}_m^{\,-1}\}_{m=1}^{\infty}$ is equicontinuous
in~$\overline{\Bbb D}.$ By the Arzela-Ascoli criterion (see, e.g.,
\cite[Theorem~20.4]{Va}) we may consider that
$\overline{G}^{\,-1}_{m_k}(y),$ $k=1,2,\ldots, $ converges to some
mapping $\widetilde{F}:\overline{{\Bbb D}}\rightarrow \overline{D}$
as $k\rightarrow\infty$ uniformly in~$\overline{D}.$ Let us to prove
that $\widetilde{F}=\overline{G}^{\,-1}.$ For this purpose, we show
that~$G(D)={\Bbb D}.$ Fix $y\in {\Bbb D}.$ Since $G_{m_k}(D)={\Bbb
D}$ for every $k=1,2,\ldots, $ we obtain that $G_{m_k}(x_k)=y$ for
some $x_k\in D.$ Since $D$ is regular, the metric space
$(\overline{D}_P, \rho)$ is compact. Thus, we may assume that
$\rho(x_k, x_0)\rightarrow 0$ as $k\rightarrow\infty,$ where
$x_0\in\overline{D}_P.$ By the triangle inequality and the
equicontinuity of ${\{\overline{G}_m\}}^{\infty}_{m=1}$ onto
$\overline{D}_P,$ see~\textbf{IV}, we obtain that
$$|\overline{G}
(x_0)-y|=|\overline{G}(x_0)-\overline{G}_{m_k}(x_k)|\leqslant
|\overline{G}(x_0)-\overline{G}_{m_k}(x_0)|+|\overline{G}_{m_k}(x_0)-\overline{G}_{m_k}(x_k)|\rightarrow
0$$
as $k\rightarrow\infty.$ Thus, $\overline{G}(x_0)=y.$ Observe that
$x_0\in D,$ because $G$ is a homeomorphism. Since $y\in {\Bbb D}$ is
arbitrary, the equality $G(D)={\Bbb D}$ is proved. In this case,
$G^{\,-1}_{m_k}\rightarrow G^{\,-1}$ locally uniformly in ${\Bbb D}$
as $k\rightarrow\infty$ (see, e.g., \cite[Lemma~3.1]{RSS}). Thus,
$\widetilde{F}(y)=G^{\,-1}(y)$ for every $y\in {\Bbb D}.$

Finally, since $\widetilde{F}(y)=G^{\,-1}(y)$ for any $y\in {\Bbb
D}$ and, in addition, $\widetilde{F}$ has a continuous extension on
$\partial{\Bbb D},$ due to the uniqueness of the limit at the
boundary points we obtain that
$\widetilde{F}(y)=\overline{G}^{\,-1}(y)$ for $y\in \overline{{\Bbb
D}}.$ Therefore, we have proved
that~$\overline{G}^{\,-1}_{m_k}\rightarrow \overline{G}^{\,-1}$
uniformly in~$\overline{\Bbb D}$ with as $k\rightarrow\infty$ with
respect to the metrics $\rho$ in $\overline{D}_P.$

\medskip
\textbf{VIII.} By~\textbf{VII,} for $y=e^{i\theta}\in
\partial {\Bbb D}$
\begin{equation}\label{eq4E}
{\rm
Re\,}F_{m_k}(e^{i\theta})=\varphi\left(\overline{G}^{\,-1}_{m_k}(e^{i\theta})\right)\rightarrow
\varphi\left(\overline{G}^{\,-1}(e^{i\theta})\right)
\end{equation}
as $k\rightarrow\infty$ uniformly on $\theta\in [0, 2\pi).$ Since by
the construction ${\rm Im\,}F_{m_k}(0)=0$ for any $k=1,2,\ldots,$ by
the Schwartz formula (see, e.g., \cite[section~8.III.3]{GK}) the
analytic function $F_{m_k}$ is uniquely restored by its real part,
namely,
\begin{equation}\label{eq4A}
F_{m_k}(y)=\frac{1}{2\pi i}\int\limits_{S(0,
1)}\varphi\left(\overline{G}^{\,-1}_{m_k}(t)\right)\frac{t+y}{t-y}\cdot\frac{dt}{t}\,.
\end{equation}
Set
\begin{equation}\label{eq5B}
F(y):=\frac{1}{2\pi i}\int\limits_{S(0,
1)}\varphi\left(\overline{G}^{\,-1}(t)\right)\frac{t+y}{t-y}\cdot\frac{dt}{t}\,.
\end{equation}
Let $K\subset {\Bbb D}$ be an arbitrary compact set, and let $y\in
K.$ By~(\ref{eq4A}) and~(\ref{eq5A}) we obtain that
\begin{equation}\label{eq11A}
|F_{m_k}(y)-F(y)|\leqslant \frac{1}{2\pi}\int\limits_{S(0,
1)}\bigl|\varphi(\overline{G}^{\,-1}_{m_k}(t))-\varphi(\overline{G}
^{\,-1}(t))\bigr|\left|\frac{t+y}{t-y}\right|\,|dt|\,.
\end{equation}
Since $K$ is compact, there is $0<R_0=R_0(K)<\infty$ such that
$K\subset B(0, R_0).$ By the triangle inequality $|t+y|\leqslant
1+R_0$ and $|t-y|\geqslant |t|-|y|\geqslant 1-R_0$ for $y\in K$ and
any $t\in {\Bbb S}^1.$ Thus
$$
\left|\frac{t+y}{t-y}\right|\leqslant \frac{1+R_0}{1-R_0}:=M=M(K)\,.
$$
Put $\varepsilon>0.$ By~(\ref{eq4E}), for a number
$\varepsilon^{\,\prime}:=\frac{\varepsilon}{M}$ there is
$N=N(\varepsilon, K)\in {\Bbb N}$ such that
$\bigl|\varphi\left(\overline{G}^{\,-1}_{m_k}(t)\right)-\varphi\left
(\overline{G}^{\,-1}(t)\right)\bigr|<\varepsilon^{\,\prime}$ for any
$k\geqslant N(\varepsilon)$ and $t\in {\Bbb S}^1.$ Now,
by~(\ref{eq11A})
\begin{equation}\label{eq13A}
|F_{m_k}(y)-F(y)|<\varepsilon \quad \forall\,\,k\geqslant N\,.
\end{equation}
It follows from~(\ref{eq13A}) that the sequence $F_{m_k}$ converges
to $F$ as $k\rightarrow\infty$ in the unit disk locally uniformly.
In particular, we obtain that ${\rm Im\,}F(0)=0.$ Note that $F$ is
analytic function in ${\Bbb D}$ (see remarks made at the end of
item~8.III in~\cite{GK}), and

$${\rm Re}\,F(re^{i\psi})=\frac{1}{2\pi}\int\limits_0^{2\pi}
\varphi\left(\overline{G}^{\,-1}(e^{i\theta})\right)\frac{1-r^2}{1-2r\cos(\theta-\psi)+r^2}\,d\theta$$
for $z=re^{i\psi}.$
By~\cite[Theorem~2.10.III.3]{GK}
\begin{equation}\label{eq15A}
\lim\limits_{\zeta\rightarrow z}{\rm
Re}\,F(\zeta)=\varphi(\overline{G}^{\,-1}(z))\quad\forall\,\,z\in\partial
{\Bbb D}\,.
\end{equation}
Observe that $F$ either is a constant or open and discrete (see,
e.g., \cite[Ch.~V,  I.6 and II.5]{St}). Thus, $f_{m_k}=F_{m_k}\circ
G_{m_k}$ converges to $f=F\circ G$ locally uniformly as
$k\rightarrow\infty,$ where $f=F\circ G$ either is a constant or
open and discrete. Moreover, by~(\ref{eq15A})
$$\lim\limits_{\zeta\rightarrow P}{\rm Re\,}
f(\zeta)= \lim\limits_{\zeta\rightarrow P}{\rm
Re\,}F(G(\zeta))=\varphi(G^{\,-1}(G(P)))=\varphi(P)\,.$$

\textbf{IX.} Since by~\textbf{VI} $G$ is a homeomorphism,
by~\cite[Lemma~1 and Theorem~1]{L} $G$  is a regular solution of the
equation~(\ref{eq2C}) for some function~$\mu:{\Bbb C}\rightarrow
{\Bbb D}.$ Since the set of points of the function $F,$ where its
Jacobian is zero, consist only of isolated points (see~\cite[Ch.~V,
5.II and 6.II]{St}), $f$ is regular solution of the Dirichlet
problem~(\ref{eq2C})--(\ref{eq1A}) whenever $F\not\equiv const.$
Note that the relation~(\ref{eq1D}) holds for the corresponding
function $K_{\mu}=K_{\mu_f}$ (see e.g.~\cite[Lemma~1]{L}).
Therefore, $f\in\frak{F}^{\mathcal{M}}_{\varphi, \Phi,
z_0}(D).$~$\Box$


\medskip
{\bf \noindent Oleksandr Dovhopiatyi} \\
{\bf 1.} Zhytomyr Ivan Franko State University,  \\
40 Bol'shaya Berdichevskaya Str., 10 008  Zhytomyr, UKRAINE \\
alexdov1111111@gmail.com

\medskip
{\bf \noindent Evgeny Sevost'yanov} \\
{\bf 1.} Zhytomyr Ivan Franko State University,  \\
40 Bol'shaya Berdichevskaya Str., 10 008  Zhytomyr, UKRAINE \\
{\bf 2.} Institute of Applied Mathematics and Mechanics\\
of NAS of Ukraine, \\
1 Dobrovol'skogo Str., 84 100 Slavyansk,  UKRAINE\\
esevostyanov2009@gmail.com


\begin{thebibliography}{99}

{\small

\bibitem[A]{A} {\sc Ahlfors, L.V.:} Lectures on Quasiconformal Mappings. -  Van Nostrand, Toronto, 1966.

\bibitem[Bou]{Bou}
{\sc Bourbaki, N.:} Functions of Real Variable. - Springer, Berlin,
2004.

\bibitem[Dyb]{Dyb} {\sc Dybov, Yu.P.:} Compactness of
classes of solutions of the Dirichlet problem for the Beltrami
equations // Proc. Inst. Appl. Math. and Mech. of NAS of Ukraine 19,
2009, 81-89 (in Russian).

\bibitem[Fu]{Fu} {\sc Fuglede, B.:} Extremal length and functional
completion. - Acta Math. 98, 1957, 171--219.

\bibitem[GSS$_1$]{GSS$_1$}{\sc Golberg, A., R.~Salimov and E.~Sevost'yanov:} Singularities of
discrete open mappings with controlled $p$-module. - J. Anal.
Math.~127, 2015, 303--328.

\bibitem[GSS$_2$]{GSS$_2$} {\sc Golberg~A., R.~Salimov and E.~Sevost'yanov:}
Normal Families of Discrete Open Mappings with Controlled
$p$-Module. - Contemporary Mathematics~667, 2016, 83--103.

\bibitem[GK]{GK}{\sc Hurwitz, A. and R.~Courant:} The Function Theory. - Nauka, Moscow, 1968 (in
Russian).

\bibitem[IS]{IS} {\sc Ilyutko, D.P. and E.A.~Sevost'yanov:} On prime ends on Riemannian manifolds. - J.
Math. Sci. 241:1, 2019, 47--63.

\bibitem[HK]{HK} {\sc Herron, J. and P.~Koskela:} Quasiextremal distance domains
and conformal mappings onto circle domains. - Compl. Var. Theor.
Appl. 15, 1990, 167--179.

\bibitem[KPRS]{KPRS} {\sc Kovtonyuk, D.A., I.V.~Petkov,
V.I.~Ryazanov, R.R.~Salimov:} The boundary behavior and the
Dirichlet problem for the Beltrami equations. - St. Petersburg Math.
J. 25:4, 2014, 587-603.

\bibitem[KR$_1$]{KR$_1$}{\sc Kovtonyuk, D.A., V.I.~Ryazanov:}
On the theory of prime ends for space mappings. - Ukrainian Math.
J.~67:4, 2015, 528–-541.

\bibitem[KR$_2$]{KR$_2$}{\sc Kovtonyuk, D.A., V.I.~Ryazanov:} Prime ends and Orlicz-Sobolev
classes. - St. Petersburg Math. J. 27:5, 2016, 765--788.

\bibitem[Kr]{Kr} {\sc Kruglikov, V.I.:} Capacity of condensers and spatial mappings quasiconformal in
the mean (in Russian). -  Mat. Sb. (N.S.) 130(172):2(6), 1986,
185–-206.

\bibitem[Ku]{Ku} {\sc Kuratowski, K.:} Topology, v.~2. -- Academic
Press, New York--London,  1968.

\bibitem[LSS]{LSS} {\sc Lomako, T., R.~Salimov~R. and E.~Sevost'yanov:} On equicontinuity of solutions to the
Beltrami equations. - Ann. Univ. Bucharest (math. series) V. LIX:2,
2010, 261-271.

\bibitem[L]{L} {\sc Lomako, T.:} On the theory of
convergence and compactness for Beltrami equations. - Ukrainian
Mathematical Journal 63:3, 2011, 393–402.

\bibitem[MRV]{MRV} {\sc Martio, O., S. Rickman, and J. V\"{a}is\"{a}l\"{a}:}
Definitions for quasiregular mappings. - Ann. Acad. Sci. Fenn. Ser.
A1 448, 1969, 1--40.

\bibitem[MRSY]{MRSY} {\sc Martio, O., V. Ryazanov, U. Srebro, and E. Yakubov:} Moduli in modern mapping
theory. - Springer Science + Business Media, LLC, New York, 2009.

\bibitem[Ma]{Ma}
{\sc Maz'ya, V.:} Lectures on isoperimetric and isocapacitary
inequalities in the theory of Sobolev spaces. - Contemp. Math.~338,
2003, 307–-340.

\bibitem[Na$_1$]{Na$_1$} {\sc N\"{a}kki, R.:} Extension of Loewner's capacity
theorem. - Trans. Amer. Math. Soc.~180, 1973, 229--236.

\bibitem[Na$_2$]{Na$_2$} {\sc N\"{a}kki, R.:} Prime ends and quasiconformal
mappings. - J. Anal. Math.~35, 1979, 13--40.

\bibitem[NP]{NP} {\sc N\"{a}kki, R. and B.~Palka:} Uniform equicontinuity
of  quasiconformal mappings. - Proc. Amer. Math. Soc.~37:2, 1973,
427--433.

\bibitem[P]{P} {\sc Petkov, I.V.:} Dirichlet problem for Beltrami equation
in simply connected domains. - Dopovidi NAN Ukrainy 11, 2015, 12--17
(in Russian).

\bibitem[Ri]{Ri} {\sc Rickman, S.:} Quasiregular mappings. -- Results in Mathematic and
Related Areas (3), 26, Springer-Verlag, Berlin, 1993.

\bibitem[RSS]{RSS} {\sc Ryazanov, V., R.~Salimov and E.~Sevost'yanov:}
On Convergence Analysis of Space Homeomorphisms. - Siberian Advances
in Mathematics 23:4, 2013, 263-293.

\bibitem[RS]{RS} {\sc Ryazanov, V., E.~Sevost'yanov:}
Equicontinuity of mappings quasiconformal in the mean. - Ann. Acad.
Sci. Fenn. 36, 2011, 231-244.

\bibitem[Sa]{Sa} {\sc Saks, S.:} Theory of the Integral. - Dover, New York, 1964.

\bibitem[SalSev]{SalSev} {\sc Salimov, R.R. and E.A.~Sevost'yanov:}
Analogs of the Ikoma-Schwartz lemma and Liouville theorem for
mappings with unbounded characteristic. - Ukrainian Math. J.~63:10,
2012, 1551--1565.

\bibitem[Sev$_1$]{Sev$_1$} {\sc
Sevost'yanov, E.A.:} Equicontinuity of homeomorphisms with unbounded
characteristic. - Siberian Advances in Mathematics 23:2, 2013,
106--122.

\bibitem[Sev$_2$]{Sev$_2$} {\sc Sevost'yanov, E.A.:} Analog of the Montel Theorem for Mappings of the
Sobolev Class with Finite Distortion. - Ukrainian Math. J. 67:6,
2015, 938--947.

\bibitem[Sev$_3$]{Sev$_3$} {\sc Sevost'yanov~E.A.:} Boundary behavior and
equicontinuity for families of mappings in terms of prime ends. -
St. Petersburg Math. J. 30:6, 2019, 973--1005.

\bibitem[Sev$_4$]{Sev$_4$} {\sc Sevost'yanov, E.A.:}
Boundary extension of mappings with the inverse Poletsky inequality
by prime ends. - https://arxiv.org/abs/2004.04800.

\bibitem[Sev$_5$]{Sev$_5$} {\sc Sevost'yanov, E.A.:}
On global behavior of mappings with integral constraints. -
https://arxiv.org/abs/2101.08859

\bibitem[SD]{SD} {\sc Sevost'yanov, E.A., O.P.~Dovhopiatyi:} On compact classes of solutions
of the Dirichlet problem with integral restrictions. -
https://arxiv.org/abs/2009.12705 .

\bibitem[SevSkv$_1$]{SevSkv$_1$} {\sc Sevost'yanov, E.A., S.A.~Skvortsov:}
On the equicontinuity of families of mappings in the case of
variable domains. - Ukrainian Mathematical Journal 71:7, 2019,
1071--1086.

\bibitem[SevSkv$_2$]{SevSkv$_2$} {\sc Sevost'yanov, E.A., S.A.~Skvortsov:}
On mappings whose inverse satisfy the Poletsky inequality. -  Ann.
Acad. Scie. Fenn. Math. 45, 2020, 259-277.

\bibitem[SevSkv$_3$]{SevSkv$_3$} {\sc Sevost'yanov, E.A., S.A.~Skvortsov:}
On equicontinuity of families of mappings with one normalization
condition. - Mat. Zametki 109:4, 2021, 597--607 (in Russian).

\bibitem[SSD]{SSD} {\sc Sevost'yanov, E.A., S.A.~Skvortsov and
O.P.~Dovhopiatyi:} On non-homeomorphic mappings with inverse
Poletsky inequality. - Journal of Mathematical Sciences 252:4, 2021,
541--557.

\bibitem[SSI]{SSI} {\sc Sevost'yanov, E.A., S.A.~Skvortsov and
N.S.~Ilkevych:} On the global behavior of inverse mappings in terms
of prime ends. - Ann. Acad. Sci. Fenn. Math. (accepted for print).

\bibitem[St]{St} {\sc Stoilow, S.:}
Principes Topologiques de la Th\'{e}orie des Fonctions Analytiques.
- Gauthier-Villars, Paris, 1956.

\bibitem[Va]{Va} {\sc V\"{a}is\"{a}l\"{a}, J.:}
Lectures on $n$-dimensional quasiconformal mappings. - Lecture Notes
in Math. 229, Springer-Verlag, Berlin etc., 1971.

\bibitem[Vu]{Vu} {\sc Vuorinen, M.:} Exceptional sets and boundary behavior of quasiregular
mappings in $n$-space. - Ann. Acad. Sci. Fenn. Ser. A 1. Math.
Dissertationes, 11, 1976, 1--44.

}

\end{thebibliography}
\end{document}